\newtheorem{theorem}{Theorem}[section]
\newtheorem{corollary}[theorem]{Corollary}
\newtheorem{lemma}[theorem]{Lemma}
\newtheorem{proposition}[theorem]{Proposition}
\theoremstyle{definition}
\newtheorem{definition}[theorem]{Definition}
\newtheorem{remark}[theorem]{Remark}
\DeclareMathOperator{\spr}{\rho} 
\DeclareMathOperator{\id}{Id} 
\DeclareMathOperator{\Ker}{Ker} 
\DeclareMathOperator{\rank}{rk} 
\DeclareMathOperator{\range}{Ran} 
\DeclareMathOperator{\Span}{Span} 
\DeclareMathOperator*{\esssup}{ess\,sup} 
\DeclareMathOperator{\Vol}{Vol} 
\DeclareMathOperator{\diag}{diag} 
\newcommand{\abs}[1]{\left\lvert #1\right \rvert} 
\newcommand{\norm}[1]{\left\lVert #1\right\lVert} 
\newcommand{\pad}[2]{\frac{\partial #1}{\partial #2}} 
\title[Stability of difference equations and applications]
      {Stability of non-autonomous difference equations with applications to transport and wave propagation on networks}
\author[Y. Chitour, G. Mazanti, and M. Sigalotti]{}
\subjclass{39A30, 39A60, 35R02, 35B35, 39A06}
\keywords{difference equations, switching systems, transport equation, wave propagation, networks, exponential stability.}
\email{yacine.chitour@lss.supelec.fr}
\email{guilherme.mazanti@polytechnique.edu}
\email{mario.sigalotti@inria.fr}
\thanks{This research was partially supported by the iCODE Institute, research project of the IDEX Paris-Saclay, and by the Hadamard Mathematics LabEx (LMH) through the grant number ANR-11-LABX-0056-LMH in the ``Programme des Investissements d'Avenir''.}
\begin{document}
\maketitle

\centerline{\scshape Yacine Chitour }
\medskip
{\footnotesize
 \centerline{Laboratoire des Signaux et Syst\`emes}
   \centerline{Universit\'e Paris Sud, CNRS, CentraleSup\'elec}
   \centerline{91192 Gif-sur-Yvette, France}
}

\medskip

\centerline{\scshape Guilherme Mazanti}
\medskip
{\footnotesize
 \centerline{CMAP, \'Ecole Polytechnique}
   \centerline{Inria Saclay, GECO Team}
   \centerline{91120 Palaiseau, France}
}

\medskip

\centerline{\scshape Mario Sigalotti}
\medskip
{\footnotesize
 \centerline{Inria Saclay, GECO Team}
   \centerline{CMAP, \'Ecole Polytechnique}
   \centerline{91128 Palaiseau, France}
}

\bigskip

\begin{abstract}
In this paper, we address the stability of transport systems and wave propagation on networks with time-varying parameters. We do so by reformulating these systems as non-autonomous difference equations and by providing a suitable representation of their solutions in terms of their initial conditions and some time-dependent matrix coefficients. This enables us to characterize the asymptotic behavior of solutions in terms of such coefficients. In the case of difference equations with arbitrary switching, we obtain a delay-independent generalization of the well-known criterion for autonomous systems due to Hale and Silkowski. As a consequence, we show that exponential stability of transport systems and wave propagation on networks is robust with respect to variations of the lengths of the edges of the network preserving their rational dependence structure. This leads to our main result: the wave equation on a network with arbitrarily switching damping at external vertices is exponentially stable if and only if the network is a tree and the damping is bounded away from zero at all external vertices but at most one.
\end{abstract}

\tableofcontents


\section{Introduction}

Dynamics on networks has generated in the past decades an intense research activity within the PDE control community \cite{Dager2006Wave, Ali2001Partial, Bressan2014Flows, Hante2009Modeling, Gugat2012Well}. In particular, stability and stabilization of transport and wave propagation on networks raise challenging questions on the relationships between the asymptotic-in-time behavior of solutions on the one hand and, on the other hand, the topology of the network, its interconnection and damping laws at the vertices, and the rational dependence of the transit times on the network edges \cite{Valein2009Stabilization, Chitour2015Persistently, Bastin2007Lyapunov, Coron2008Dissipative, Zuazua2013Control, Alabau2015Finite}. A case of special interest is when some coefficients of the system are time-dependent and switch arbitrarily within a given set \cite{Gugat2010Stars, Prieur2014Stability, Amin2012Exponential}.

In this paper, we address stability issues first for transport systems with time-dependent transmission conditions and then for wave propagation on networks with time-dependent damping terms. When the time-dependent coefficients switch arbitrarily in a given bounded set, we prove that the stability is robust with respect to variations of the lengths of the edges of the network preserving their rational dependence structure (see Corollary \ref{CoroSilkTransport} for transport and Corollary \ref{CoroWaveEquivLambdaL} for wave propagation). Such robustness enables us to get the main result of the paper, namely a necessary and sufficient criterion for exponential stability of wave propagation on networks: exponential stability holds for a network if and only if it is a tree and the damping is bounded away from zero at all external vertices but at most one (Theorem \ref{TheoWaveStabIffTopo}).

We address these issues by formulating them within the framework of non-autonomous linear difference equations
\begin{equation}
\label{IntroDelay}
u(t) = \sum_{j=1}^N A_j(t) u(t - \Lambda_j), \qquad u(t) \in \mathbb C^d,\; (\Lambda_1, \dotsc, \Lambda_N) \in (\mathbb R_+^\ast)^N.
\end{equation}
This standard approach relies on the d'Alembert decomposition and classical transformations of hyperbolic systems of PDEs into delay differential-difference equations \cite{Cooke1968Differential, Slemrod1971Nonexistence, Miranker1961Periodic, Brayton1966Bifurcation, Kloss2012Flow, Fridman2010Bounds}. Here, stability is meant uniformly with respect to the matrix-valued function $A(\cdot) = (A_1(\cdot), \dotsc, A_N(\cdot))$ belonging to a given class $\mathcal A$.

In the autonomous case, Equation \eqref{IntroDelay} has a long history and its stability is completely characterized using Laplace transform techniques by the celebrated Hale--Silkowski criterion (see e.g. \cite[Theorem 5.2]{Avellar1980Zeros}, \cite[Chapter 9, Theorem 6.1]{Hale1993Introduction}). The latter can be formulated as follows: if $\Lambda_1, \dotsc, \Lambda_N$ are rationally independent, then all solutions of $u(t) = \sum_{j=1}^N A_j u(t - \Lambda_j)$ tend exponentially to zero as $t$ tends to infinity if and only if $\rho_{\mathrm{HS}}(A) < 1$, where
\begin{equation}
\label{RhoHSIntro}
\rho_{\mathrm{HS}}(A) = \max_{(\theta_1, \dotsc, \theta_N) \in [0, 2\pi]^N} \rho\left(\sum_{j=1}^N A_j e^{i \theta_j}\right)
\end{equation}
and $\rho(\cdot)$ denotes the spectral radius. Notice that the latter condition does not depend on $\Lambda_1, \dotsc, \Lambda_N$. The Hale--Silkowski criterion actually says more, namely the striking fact that exponential stability for some $\Lambda_1, \dotsc, \Lambda_N$ rationally independent is equivalent to exponential stability for any choice of positive delays $L_1, \dotsc, L_N$. This criterion can be used to evaluate the maximal Lyapunov exponent associated with $u(t) = \sum_{j=1}^N A_j u(t - \Lambda_j)$, i.e., the infimum over the exponential bounds for the corresponding semigroup. A remarkable feature of the Hale--Silkowski criterion is that, contrarily to the maximal Lyapunov exponent, it does not involve taking limits as time tends to infinity. An extension of these results has been obtained in \cite{Michiels2009Strong} for the case where $\Lambda_1, \dotsc, \Lambda_N$ are not assumed to be rationally independent. 

The non-autonomous case turns out to be more difficult since one does not have a general characterization of the exponential stability of \eqref{IntroDelay} not involving limits as time tends to infinity. To illustrate that, consider the simple case $N = 1$ of a single delay and $\mathcal A = L^\infty(\mathbb R, \mathfrak B)$ where $\mathfrak B$ is a bounded set of $d \times d$ matrices. Then the stability of \eqref{IntroDelay} is equivalent to that of the discrete-time switched system $u_{n+1} = A_n u_n$ where $A_n \in \mathfrak B$, and it is characterized by the joint spectral radius of $\mathfrak B$ (see for instance \cite[Section 2.2]{Jungers2009Joint} and references therein) for which there is not yet a general characterization not involving limits as $n$ tends to infinity.

Up to our knowledge, the only results regarding the stability of non-autonomous difference equations were obtained in \cite{Ngoc2014Exponential}, where sufficient conditions for stability are deduced from Perron--Frobenius Theorem. Our approach is rather based on a trajectory analysis relying on a suitable representation for solutions of \eqref{IntroDelay}, which expresses the solution $u(t)$ at time $t$ as a linear combination of the initial condition $u_0$ evaluated at finitely many points identified explicitly. The matrix coefficients, denoted by $\Theta$, are obtained in terms of the functions $A_1(\cdot), \dotsc, A_N(\cdot)$ and take into account the rational dependence structure of $\Lambda_1, \dotsc, \Lambda_N$ (Proposition \ref{PropSolExpliciteAdpBis}). This representation provides a correspondence between the asymptotic behavior of solutions of \eqref{IntroDelay}, uniformly with respect to the initial condition $u_0$ and $A(\cdot) \in \mathcal A$, and that of the matrix coefficients $\Theta$ uniformly with respect to $A(\cdot) \in \mathcal A$ (Theorem \ref{TheoP0EquivP1}). In the case where $\mathcal A = L^\infty(\mathbb R, \mathfrak B)$ for some bounded set $\mathfrak B$ of $N$-tuples of $d \times d$ matrices, we extend the results of \cite{Michiels2009Strong}, replacing $\rho_{\mathrm{HS}}$ in the Hale--Silkowski criterion by a generalization $\mu$ of the joint spectral radius. As a consequence of our analysis, and despite the lack of a closed and delay-independent formula for $\mu$ analogous to \eqref{RhoHSIntro}, we are able to show that stability for some $N$-tuple $\Lambda = (\Lambda_1, \dotsc, \Lambda_N)$ is equivalent to stability for any choice of $N$-tuple $(L_1, \dotsc, L_N)$ having the same rational dependence structure as $\Lambda$ (Corollaries \ref{CoroSilkRho0} and \ref{CoroSilkRhoHS}).

The structure of the paper goes as follows. Section \ref{SecNotations} provides the main notations and definitions used in this paper. Difference equations of the form \eqref{IntroDelay} are discussed in Section \ref{SecDiffEqns}. We start by establishing the well-posedness of the Cauchy problem and a representation formula for solutions in Sections \ref{SecCauchy} and \ref{SecExplicit}. Stability criteria are given in Section \ref{SecStability} in terms of convergence of the coefficients and specified to the cases of shift-invariant classes $\mathcal A$ and arbitrary switching. In the latter case, we provide the above discussed generalization of the Hale--Silkowski criterion. Applications to transport equations are developed in Section \ref{SecTransport} by exhibiting a correspondence with difference equations of the type \eqref{IntroDelay}. Thanks to the d'Alembert decomposition, results for transport equations are transposed to wave propagation on networks in Section \ref{SecWave}. The topological characterization of exponential stability is given in Section \ref{SecStabWave}.


\section{Notations and definitions}
\label{SecNotations}

In this paper, we denote by $\mathbb N$ and $\mathbb N^\ast$ the set of nonnegative and positive integers respectively, $\mathbb R_+ = \left[0, +\infty\right)$ and $\mathbb R_+^\ast = (0, +\infty)$. For $a, b \in \mathbb R$, let $\llbracket a, b \rrbracket = [a, b] \cap \mathbb Z$, with the convention that $[a, b] = \emptyset$ if $a > b$. The closure of a set $F$ is denoted by $\overline F$. If $x \in \mathbb R$ and $F \subset \mathbb R$, we use $x + F$ to denote the set $\{x + y \;|\: y \in F\}$.

We use $\# F$ and $\delta_{ij}$ to denote, respectively, the cardinality of a set $F$ and the Kronecker symbol of $i, j$. For $x \in \mathbb R$, we use $x_\pm$ to denote $\max(\pm x, 0)$ and we extend this notation componentwise to vectors. For $x \in \mathbb R^d$, we use $x_{\min}$ and $x_{\max}$ to denote the smallest and the largest components of $x$, respectively.

If $K$ is a subset of $\mathbb C$ and $d, m \in \mathbb N$, the set of $d \times m$ matrices with coefficients in $K$ is denoted by $\mathcal M_{d, m}(K)$, or simply by $\mathcal M_d(K)$ when $d = m$. The identity matrix in $\mathcal M_d(\mathbb C)$ is denoted by $\id_d$. We use $e_1, \dotsc, e_d$ to denote the canonical basis of $\mathbb C^d$, i.e., $e_i = (\delta_{ij})_{j \in \llbracket 1, d\rrbracket}$ for $i \in \llbracket 1, d\rrbracket$. For $p \in [1, +\infty]$, $\abs{\cdot}_{p}$ indicates both the $\ell^p$-norm in $\mathbb C^d$ and the induced matrix norm in $\mathcal M_d(\mathbb C)$. We use $\spr(A)$ to denote the spectral radius of a matrix $A \in \mathcal M_d(\mathbb C)$, i.e., the maximum of $\abs{\lambda}$ with $\lambda$ eigenvalue of $A$. The range and kernel of a matrix $A$ are denoted by $\range A$ and $\Ker A$ respectively, and $\rank(A)$ denotes the dimension of $\range A$. Given $A_1, \dotsc, A_N \in \mathcal M_d(\mathbb C)$, we denote by $\prod_{i=1}^N A_i$ the ordered product $A_1 \dotsm A_N$.

All Banach and Hilbert spaces are supposed to be complex. For $p \in [1, +\infty]$, we use $L^p$ to denote the usual Lebesgue spaces of $p$-integrable functions and $W^{k, p}$ the Sobolev spaces of $k$-times weakly differentiable functions with derivatives in $L^p$.

A subset $\mathcal A$ of $L^\infty_{\mathrm{loc}}(\mathbb R, \mathcal M_d(\mathbb C)^N)$ is said to be \emph{uniformly locally bounded} if, for every compact time interval $I$, $\sup_{A \in \mathcal A} \norm{A}_{L^\infty(I, \mathcal M_d(\mathbb C)^N)}$ is finite. We say that $\mathcal A$ is \emph{shift-invariant} if $A(\cdot + t) \in \mathcal A$ for every $A \in \mathcal A$ and $t \in \mathbb R$.

Throughout the paper, we will use the indices $\delta$, $\tau$ and $\omega$ in the notations of systems and functional spaces when dealing, respectively, with difference equations, transport systems and wave propagation.


\section{Difference equations}
\label{SecDiffEqns}

Let $N, d \in \mathbb N^\ast$, $\Lambda = (\Lambda_1, \dotsc, \Lambda_N) \in (\mathbb R_+^\ast)^N$, and consider the system of time-dependent difference equations
\begin{equation}
\label{SystDelay}
\Sigma_{\delta}(\Lambda, A): \qquad
u(t) = \sum_{j=1}^N A_j(t) u(t - \Lambda_j).
\end{equation}
Here, $u(t) \in \mathbb C^d$ and $A = (A_1, \dotsc, A_N): \mathbb R \to \mathcal M_d(\mathbb C)^N$.

\subsection{Well-posedness of the Cauchy problem}
\label{SecCauchy}

In this section, we show existence and uniqueness of solutions of the Cauchy problem associated with \eqref{SystDelay}. We also consider the regularity of these solutions in terms of the initial condition and $A(\cdot)$.

\begin{definition}
Let $u_0: \left[-\Lambda_{\max}, 0\right) \to \mathbb C^d$ and $A = (A_1, \dotsc, A_N): \mathbb R \to \mathcal M_d(\mathbb C)^N$. We say that $u: \left[-\Lambda_{\max}, +\infty\right) \to \mathbb C^d$ is a \emph{solution} of $\Sigma_{\delta}(\Lambda, A)$ with initial condition $u_0$ if it satisfies \eqref{SystDelay} for every $t \in \mathbb R_+$ and $u(t) = u_0(t)$ for $t \in \left[-\Lambda_{\max}, 0\right)$. In this case, we set, for $t \geq 0$, $u_t = u(\cdot + t)|_{\left[-\Lambda_{\max}, 0\right)}$.
\end{definition}

We have the following result.

\begin{proposition}
\label{PropExistUnique}
Let $u_0: \left[-\Lambda_{\max}, 0\right) \to \mathbb C^d$ and $A = (A_1, \dotsc, A_N): \mathbb R \to \mathcal M_d(\mathbb C)^N$. Then $\Sigma_{\delta}(\Lambda, A)$ admits a unique solution $u: \left[-\Lambda_{\max}, +\infty\right) \to \mathbb C^d$ with initial condition $u_0$.
\end{proposition}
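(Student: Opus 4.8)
The plan is to construct the solution by the classical method of steps, exploiting the fact that all delays are bounded below by $\Lambda_{\min} = \min_{j \in \llbracket 1, N\rrbracket} \Lambda_j > 0$. Since the right-hand side of $\Sigma_{\delta}(\Lambda, A)$ expresses $u(t)$ purely in terms of the values $u(t - \Lambda_j)$ at strictly earlier times, and these earlier times are separated from $t$ by at least $\Lambda_{\min}$, no genuine fixed-point argument is needed: the equation becomes an explicit recursion over successive time intervals of length $\Lambda_{\min}$.

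Concretely, I would prove by induction on $n \in \mathbb N$ the statement that there exists a unique function $u^{(n)} \colon [-\Lambda_{\max}, n\Lambda_{\min}) \to \mathbb C^d$ coinciding with $u_0$ on $[-\Lambda_{\max}, 0)$ and satisfying \eqref{SystDelay} for every $t \in [0, n\Lambda_{\min})$. The base case $n = 0$ is simply $u^{(0)} = u_0$. For the inductive step, assuming $u^{(n)}$ is given, the key point is the following interval computation: for $t \in [n\Lambda_{\min}, (n+1)\Lambda_{\min})$ and any $j \in \llbracket 1, N\rrbracket$, one has $t - \Lambda_j \geq -\Lambda_j \geq -\Lambda_{\max}$ and, using $\Lambda_j \geq \Lambda_{\min}$, also $t - \Lambda_j < (n+1)\Lambda_{\min} - \Lambda_{\min} = n\Lambda_{\min}$. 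Hence every delayed argument $t - \Lambda_j$ lies in $[-\Lambda_{\max}, n\Lambda_{\min})$, where $u^{(n)}$ is already defined.

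This allows me to extend $u^{(n)}$ to $[-\Lambda_{\max}, (n+1)\Lambda_{\min})$ by keeping $u^{(n+1)} = u^{(n)}$ on $[-\Lambda_{\max}, n\Lambda_{\min})$ and setting $u^{(n+1)}(t) = \sum_{j=1}^N A_j(t)\, u^{(n)}(t - \Lambda_j)$ for $t \in [n\Lambda_{\min}, (n+1)\Lambda_{\min})$. By construction $u^{(n+1)}$ satisfies \eqref{SystDelay} on $[0, (n+1)\Lambda_{\min})$, which gives existence; uniqueness follows because the values of $u^{(n+1)}$ on the new interval are forced by \eqref{SystDelay} together with the already-determined values on $[-\Lambda_{\max}, n\Lambda_{\min})$, while the restriction of $u^{(n+1)}$ to $[-\Lambda_{\max}, n\Lambda_{\min})$ must equal $u^{(n)}$ by the inductive uniqueness. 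Since $\Lambda_{\min} > 0$, the intervals $[0, n\Lambda_{\min})$ exhaust $\mathbb R_+$ as $n \to \infty$, so the functions $u^{(n)}$ glue into a single $u \colon [-\Lambda_{\max}, +\infty) \to \mathbb C^d$, which is the unique solution with initial condition $u_0$.

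As for the main difficulty, there is essentially none of a deep nature: the strict positivity of $\Lambda_{\min}$ prevents any accumulation of the delayed arguments at the current time, so no continuity, measurability, or contraction estimate on $A(\cdot)$ or $u_0$ is required. The only point demanding care is the bookkeeping in the interval inequalities above, ensuring that $t - \Lambda_j$ never escapes the region on which $u$ has already been constructed; this is exactly where the hypothesis $\Lambda \in (\mathbb R_+^\ast)^N$, hence $\Lambda_{\min} > 0$, enters and cannot be dispensed with.
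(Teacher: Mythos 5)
Your proof is correct and takes essentially the same approach as the paper: the method of steps, constructing the solution recursively on successive intervals of length $\Lambda_{\min}$. The paper carries out only the first step explicitly (on $\left[-\Lambda_{\max}, \Lambda_{\min}\right)$) and dismisses the rest as ``a standard inductive argument,'' which is precisely the induction you spell out in full.
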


\begin{proof}
It suffices to build the solution $u$ on $\left[-\Lambda_{\max}, \Lambda_{\min}\right)$ and then complete its construction on $\left[\Lambda_{\min}, +\infty\right)$ by a standard inductive argument.

Suppose that $u: \left[-\Lambda_{\max}, \Lambda_{\min}\right) \to \mathbb C^d$ is a solution of $\Sigma_{\delta}(\Lambda, A)$ with initial condition $u_0$. Then, by \eqref{SystDelay}, we have
\begin{equation}
\label{SolSmallTime}
u(t) = 
\left\{
\begin{aligned}
& \sum_{j=1}^N A_j(t) u_0(t - \Lambda_j), & & \text{ if } 0 \leq t < \Lambda_{\min}, \\
& u_0(t), & & \text{ if } -\Lambda_{\max} \leq t < 0.
\end{aligned}
\right.
\end{equation}
Since the right-hand side is uniquely defined in terms of $u_0$ and $A$, we obtain the uniqueness of the solution. Conversely, if $u: \left[-\Lambda_{\max}, \Lambda_{\min}\right) \to \mathbb C^d$ is defined by \eqref{SolSmallTime}, then \eqref{SystDelay} clearly holds for $t \in \left[-\Lambda_{\max}, \Lambda_{\min}\right)$ and thus $u$ is a solution of $\Sigma_{\delta}(\Lambda, A)$.
\end{proof}

\begin{definition}
For $p \in \left[1, +\infty\right]$, we use $\mathsf X_p^\delta$ to denote the Banach space $\mathsf X_p^\delta = L^p([-\Lambda_{\max}, 0], \mathbb C^d)$ endowed with the usual $L^p$-norm denoted by $\norm{\cdot}_{p}$.
\end{definition}

\begin{remark}
\label{RemkRegular}
If $u_0, v_0: \left[-\Lambda_{\max}, 0\right) \to \mathbb C^d$ are such that $u_0 = v_0$ almost everywhere on $\left[-\Lambda_{\max}, 0\right)$ and $A, B: \mathbb R \to \mathcal M_d(\mathbb C)^N$ are such that $A = B$ almost everywhere on $\mathbb R_+$, then it follows from \eqref{SolSmallTime} that the solutions $u, v: \left[-\Lambda_{\max}, +\infty\right) \to \mathbb C^d$ associated respectively with $A$, $u_0$ and $B$, $v_0$ satisfy $u = v$ almost everywhere on $\left[-\Lambda_{\max}, +\infty\right)$. In particular, for initial conditions in $\mathsf X_p^\delta$, $p \in [1, +\infty]$, we still have existence and uniqueness of solutions, now in the sense of functions defined almost everywhere. If moreover $A \in L^\infty_{\mathrm{loc}}(\mathbb R, \mathcal M_d(\mathbb C)^N)$, it easily follows from \eqref{SolSmallTime} that the corresponding solution $u$ of $\Sigma_{\delta}(\Lambda, A)$ satisfies $u \in L^p_{\mathrm{loc}}(\left[-\Lambda_{\max}, +\infty\right), \mathbb C^d)$.
\end{remark}

\begin{proposition}
\label{PropRegular}
Let $p \in \left[1, +\infty\right)$, $u_0 \in \mathsf X_p^\delta$, $A \in L^\infty_{\mathrm{loc}}(\mathbb R, \mathcal M_d(\mathbb C)^N)$, and $u$ be the solution of $\Sigma_{\delta}(\Lambda, A)$ with initial condition $u_0$. Then the $\mathsf X_p^\delta$-valued mapping $t \mapsto u_t$ defined on $\mathbb R_+$ is continuous.
\end{proposition}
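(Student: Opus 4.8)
The plan is to reduce the statement to the classical continuity of the translation operator on $L^p(\mathbb R, \mathbb C^d)$ for finite $p$, which is precisely where the hypothesis $p < +\infty$ will be used. By Remark \ref{RemkRegular}, since $A \in L^\infty_{\mathrm{loc}}(\mathbb R, \mathcal M_d(\mathbb C)^N)$ and $u_0 \in \mathsf X_p^\delta$, the solution satisfies $u \in L^p_{\mathrm{loc}}([-\Lambda_{\max}, +\infty), \mathbb C^d)$. Recalling that $u_t(s) = u(s + t)$ for $s \in [-\Lambda_{\max}, 0)$, the map $t \mapsto u_t$ is nothing but the translation flow applied to $u$, followed by restriction to the window $[-\Lambda_{\max}, 0]$.

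First I would localize in time. Fix $T > 0$ and define $\tilde u \in L^p(\mathbb R, \mathbb C^d)$ by setting $\tilde u = u$ on $[-\Lambda_{\max}, T + 1]$ and $\tilde u = 0$ elsewhere; this is a genuine element of $L^p(\mathbb R, \mathbb C^d)$ because $u \in L^p_{\mathrm{loc}}$ and the truncation interval is compact. For every $t \in [0, T]$ and $s \in [-\Lambda_{\max}, 0)$ one has $s + t \in [-\Lambda_{\max}, T)$, whence $u_t(s) = \tilde u(s + t)$. Thus, writing $R$ for the (linear, norm non-increasing) restriction operator from $L^p(\mathbb R, \mathbb C^d)$ to $\mathsf X_p^\delta$, we obtain $u_t = R\bigl(\tilde u(\cdot + t)\bigr)$ for all $t \in [0, T]$.

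It then suffices to show that $t \mapsto \tilde u(\cdot + t)$ is continuous from $\mathbb R$ into $L^p(\mathbb R, \mathbb C^d)$, since composing with the bounded operator $R$ preserves continuity and $T$ is arbitrary. This is the standard continuity of translations in $L^p$ for $p < +\infty$: one approximates $\tilde u$ in $L^p$ by a continuous compactly supported function $g$; for such $g$, uniform continuity together with the compact support gives $\norm{g(\cdot + t) - g}_{p} \to 0$ as $t \to 0$; and the general case follows by translation-invariance of the $L^p$-norm and the triangle inequality,
\[
\norm{\tilde u(\cdot + t) - \tilde u}_{p} \leq 2 \norm{\tilde u - g}_{p} + \norm{g(\cdot + t) - g}_{p},
\]
combined with the fact that continuity at an arbitrary point reduces to continuity at $0$ by translating.

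The argument is essentially routine once this reduction is in place. The only point requiring care is the localization step, which replaces $u$ by a bona fide element of $L^p(\mathbb R, \mathbb C^d)$, and the single indispensable analytic ingredient is the continuity of translations in $L^p$ for finite $p$. The latter is exactly the property that fails for $p = +\infty$, which explains why the statement is restricted to $p \in [1, +\infty)$.
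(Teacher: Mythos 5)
Your proof is correct and follows essentially the same route as the paper: both identify $u_t$ as a windowed translation of $u$ (using Remark \ref{RemkRegular} to ensure $u \in L^p_{\mathrm{loc}}$) and conclude by the continuity of translations in $L^p$ for $p < +\infty$. The only difference is that you spell out the localization and the density argument for translation continuity, whereas the paper simply cites \cite[Theorem 9.5]{Rudin1987Real}.
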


\begin{proof}
By Remark \ref{RemkRegular}, $u_t \in \mathsf X_p^\delta$ for every $t \in \mathbb R_+$. Since $u_t(s) = u(s + t)$ for $s \in \left[-\Lambda_{\max}, 0\right)$, the continuity of $t \mapsto u_t$ follows from the continuity of translations in $L^p$ (see, for instance, \cite[Theorem 9.5]{Rudin1987Real}).
\end{proof}

\begin{remark}
The conclusion of Proposition \ref{PropRegular} does not hold for $p = +\infty$ since translations in $L^\infty$ are not continuous.
\end{remark}

\subsection{Representation formula for the solution}
\label{SecExplicit}

When $t \in \left[0, \Lambda_{\min}\right)$, Equation \eqref{SolSmallTime} yields $u(t)$ in terms of the initial condition $u_0$. If $t \geq \Lambda_{\min}$, we use \eqref{SystDelay} to express the solution $u$ at time $t$ in terms of its values on previous times $t - \Lambda_j$, and, for each $j$ such that $t > \Lambda_j$, we can reapply \eqref{SystDelay} at the time $t - \Lambda_j$ to obtain the expression of $u(t - \Lambda_j)$ in terms of $u$ evaluated at previous times. By proceeding inductively, we can obtain an explicit expression for $u$ in terms of $u_0$. For that purpose, let us introduce some notations.

\begin{definition}
\begin{enumerate}[label={\bf \roman*.}, ref={(\roman*)}]
\item An \emph{increasing path} (in $\mathbb N^N$) is a finite sequence of points $(\mathbf q_k)_{k=1}^n$ in $\mathbb N^N$ such that, for $k \in \llbracket 1, n-1\rrbracket$, $\mathbf q_{k+1}$ is obtained from $\mathbf q_k$ by adding $1$ to exactly one of the coordinates of $\mathbf q_k$. For $n \in \mathbb N^\ast$ and $v = (v_1, \dotsc, v_n) \in \llbracket 1, N\rrbracket^n$, we use $(\mathbf p_v(k))_{k=1}^{n+1}$ to denote the increasing path defined by
\[
\mathbf p_v(k) = \sum_{j=1}^{k-1} e_{v_j}.
\]

\item For $\mathbf n \in \mathbb N^N \setminus \{0\}$, we use $V_{\mathbf n}$ to denote the set
\begin{equation*}
V_{\mathbf n} = \left\{(v_1, \dotsc, v_{\abs{\mathbf n}_{1}}) \in \llbracket 1, N\rrbracket^{\abs{\mathbf n}_{1}} \;\middle|\: \mathbf p_v(\abs{\mathbf n}_1 + 1) = \mathbf n\right\},
\end{equation*}
i.e., $V_{\mathbf n}$ can be seen as the set of all increasing paths from $0$ to $\mathbf n$.

\item For $A = (A_1, \dotsc, A_N): \mathbb R \to \mathcal M_d(\mathbb C)^N$, $\Lambda = (\Lambda_1, \dotsc, \Lambda_N) \in (\mathbb R_+^\ast)^N$, $\mathbf n \in \mathbb Z^N$ and $t \in \mathbb R$, we define the matrix $\Xi^{\Lambda, A}_{\mathbf n, t} \in \mathcal M_d(\mathbb C)$ inductively by
\begin{equation}
\label{DefiXi}
\Xi^{\Lambda, A}_{\mathbf n, t} = 
\left\{
\begin{aligned}
& 0, & & \text{ if } \mathbf n \in \mathbb Z^N \setminus \mathbb N^N, \\
& \id_d, & & \text{ if } \mathbf n = 0, \\
& \sum_{k=1}^N A_k(t) \Xi_{\mathbf n - e_k, t - \Lambda_k}^{\Lambda, A}, & & \text{ if } \mathbf n \in \mathbb N^N \setminus \{0\}. \\
\end{aligned}
\right.
\end{equation}
We omit $\Lambda$, $A$ or both from the notation $\Xi_{\mathbf n, t}^{\Lambda, A}$ when they are clear from the context.
\end{enumerate}
\end{definition}

The following result provides a way to write $\Xi_{\mathbf n, t}$ as a sum over $V_{\mathbf n}$ and as an alternative recursion formula.

\begin{proposition}
For every $\mathbf n \in \mathbb N^N \setminus \{0\}$ and $t \in \mathbb R$, we have
\begin{equation}
\label{XiExplicit}
\Xi_{\mathbf n, t}^{\Lambda, A} = \sum_{v \in V_{\mathbf n}} \prod_{k=1}^{\abs{\mathbf n}_{1}} A_{v_k}\left(t - \Lambda \cdot \mathbf p_v(k)\right)
\end{equation}
and
\begin{equation}
\label{XiRecurrence}
\Xi_{\mathbf n, t}^{\Lambda, A} = \sum_{k=1}^N \Xi_{\mathbf n - e_k, t}^{\Lambda, A} A_k(t - \Lambda \cdot \mathbf n + \Lambda_k).
\end{equation}
\end{proposition}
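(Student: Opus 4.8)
The plan is to prove the closed-form expression \eqref{XiExplicit} first, by induction on the length $\abs{\mathbf n}_1$ of the increasing paths, and then to deduce the alternative recursion \eqref{XiRecurrence} from it. For the base case $\abs{\mathbf n}_1 = 1$, one has $\mathbf n = e_j$ for some $j \in \llbracket 1, N\rrbracket$; the set $V_{e_j}$ consists of the single path $v = (j)$, for which $\mathbf p_v(1) = 0$, so the right-hand side of \eqref{XiExplicit} equals $A_j(t)$. On the other hand, the defining recursion \eqref{DefiXi} gives $\Xi_{e_j,t} = \sum_{k=1}^N A_k(t)\,\Xi_{e_j - e_k, t - \Lambda_k}$, and since $e_j - e_k \in \mathbb N^N$ only when $k = j$ (yielding $\Xi_{0, t - \Lambda_j} = \id_d$) while $e_j - e_k \notin \mathbb N^N$ otherwise (yielding $0$), this also equals $A_j(t)$.

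For the inductive step, I would fix $\mathbf n$ with $\abs{\mathbf n}_1 = n \geq 2$ and assume \eqref{XiExplicit} for all multi-indices of smaller $\ell^1$-norm. The key combinatorial observation is that every increasing path from $0$ to $\mathbf n$ is uniquely determined by its first step and the remaining path: the map $v = (v_1, \dotsc, v_n) \mapsto (v_1, (v_2, \dotsc, v_n))$ is a bijection from $V_{\mathbf n}$ onto $\bigsqcup_{k : n_k \geq 1} \{k\} \times V_{\mathbf n - e_k}$. Writing $w = (v_2, \dotsc, v_n)$, one has $\mathbf p_v(1) = 0$ and $\mathbf p_v(m) = e_{v_1} + \mathbf p_w(m-1)$ for $m \geq 2$, so that $A_{v_m}(t - \Lambda\cdot\mathbf p_v(m)) = A_{w_{m-1}}((t-\Lambda_{v_1}) - \Lambda\cdot\mathbf p_w(m-1))$. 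Splitting off the $m=1$ factor $A_{v_1}(t)$ and applying the induction hypothesis to $\Xi_{\mathbf n - e_k, t - \Lambda_k}$ then matches the sum over $V_{\mathbf n}$ exactly with $\sum_{k=1}^N A_k(t)\,\Xi_{\mathbf n - e_k, t - \Lambda_k}$, which by \eqref{DefiXi} equals $\Xi_{\mathbf n, t}$ (terms with $n_k = 0$ contribute zero). This proves \eqref{XiExplicit}.

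To obtain \eqref{XiRecurrence}, I would run the analogous decomposition by the \emph{last} step rather than the first: the map $v \mapsto ((v_1, \dotsc, v_{n-1}), v_n)$ is a bijection from $V_{\mathbf n}$ onto $\bigsqcup_{k : n_k \geq 1} V_{\mathbf n - e_k} \times \{k\}$. For $v \in V_{\mathbf n}$ with $v_n = k$ and $w = (v_1, \dotsc, v_{n-1}) \in V_{\mathbf n - e_k}$, the partial sums satisfy $\mathbf p_v(m) = \mathbf p_w(m)$ for $m \leq n-1$, while $\mathbf p_v(n) = \mathbf n - e_k$, whence the last factor is $A_k(t - \Lambda\cdot\mathbf n + \Lambda_k)$, which does not depend on $w$. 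Factoring it out of the product and summing over $w \in V_{\mathbf n - e_k}$ reproduces $\Xi_{\mathbf n - e_k, t}$ via \eqref{XiExplicit}, yielding \eqref{XiRecurrence} (again extending the sum to all $k$ since $\Xi_{\mathbf n - e_k, t} = 0$ when $n_k = 0$).

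The arguments are elementary, so the only real difficulty is bookkeeping: one must track carefully how the shift in the time argument $t - \Lambda\cdot\mathbf p_v(k)$ transforms under removing the first (respectively last) step of the path, and verify that the two path decompositions are genuine bijections onto the indicated disjoint unions. I expect this index arithmetic — especially the identity $\mathbf p_v(m) = e_{v_1} + \mathbf p_w(m-1)$ and its last-step analogue — to be the step most prone to error, but no conceptual obstacle arises.
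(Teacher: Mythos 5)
Your proposal is correct and follows essentially the same route as the paper: induction on $\abs{\mathbf n}_1$ with the first-step decomposition of $V_{\mathbf n}$ (and the identity $\mathbf p_v(m) = e_{v_1} + \mathbf p_w(m-1)$) to prove \eqref{XiExplicit}, then the last-step decomposition combined with \eqref{XiExplicit} and the observation $\mathbf p_v(\abs{\mathbf n}_1) = \mathbf n - e_k$ to get \eqref{XiRecurrence}, extending sums over $k$ with $n_k = 0$ by the vanishing convention in \eqref{DefiXi}. The bookkeeping you flag as the delicate point is exactly what the paper verifies, and your handling of it (including keeping the factored terms on the correct side of the ordered matrix product) is sound.
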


\begin{proof}
We prove \eqref{XiExplicit} by induction over $\abs{\mathbf n}_{1}$. If $\mathbf n = e_i$ for some $i \in \llbracket 1, N\rrbracket$, we have
\[
\sum_{v \in V_{e_i}} \prod_{k=1}^{1} A_{v_k}(t) = A_i(t) = \Xi_{e_i, t}.
\]
Let $R \in \mathbb N^\ast$ be such that \eqref{XiExplicit} holds for every $\mathbf n \in \mathbb N^N$ with $\abs{\mathbf n}_{1} = R$ and $t \in \mathbb R$. If $\mathbf n \in \mathbb N^N$ is such that $\abs{\mathbf n}_{1} = R + 1$ and $t \in \mathbb R$, we have, by \eqref{DefiXi} and the induction hypothesis, that
\[
\begin{split}
\Xi_{\mathbf n, t} & = \sum_{\substack{k=1 \\ n_k \geq 1}}^N A_k(t) \Xi_{\mathbf n - e_k, t - \Lambda_k} = \sum_{\substack{k=1 \\ n_k \geq 1}}^N \sum_{v \in V_{\mathbf n - e_k}} A_k(t) \prod_{r=1}^{\abs{\mathbf n}_{1} - 1} A_{v_r}\left(t - \Lambda_k - \Lambda \cdot \mathbf p_v(r)\right) \\
 & = \sum_{v \in V_{\mathbf n}} \prod_{r=1}^{\abs{\mathbf n}_{1}} A_{v_r}(t - \Lambda \cdot \mathbf p_v(r)),
\end{split}
\]
where we use that $V_{\mathbf n} = \{(k, v) \;|\: k \in \llbracket 1, N\rrbracket,\; n_k \geq 1,\; v \in V_{\mathbf n - e_k}\}$ and that $e_k + \mathbf p_v(r) = \mathbf p_{(k, v)}(r+1)$. This establishes \eqref{XiExplicit}.

We now turn to the proof of \eqref{XiRecurrence}. Since $\Xi_{e_j, t} = A_j(t)$, \eqref{XiRecurrence} is satisfied for $\mathbf n = e_j$, $j \in \llbracket 1, N\rrbracket$. For $\mathbf n \in \mathbb N^N$ with $\abs{\mathbf n}_1 \geq 2$, the set $V_{\mathbf n}$ can be written as
\[V_{\mathbf n} = \{(v, k) \;|\: k \in \llbracket 1, N\rrbracket,\; n_k \geq 1,\; v \in V_{\mathbf n - e_k}\},\]
and thus, by \eqref{XiExplicit}, we have
\[
\begin{split}
\Xi_{\mathbf n, t} & = \sum_{\substack{k = 1 \\ n_k \geq 1}}^N \sum_{v \in V_{\mathbf n - e_k}} \left[\prod_{r=1}^{\abs{\mathbf n}_{1} - 1} A_{v_r}\left(t - \Lambda \cdot \mathbf p_v(r)\right)\right] A_k\left(t - \Lambda \cdot \mathbf p_v(\abs{\mathbf n}_1)\right) \\
 & = \sum_{\substack{k = 1 \\ n_k \geq 1}}^N \sum_{v \in V_{\mathbf n - e_k}} \left[\prod_{r=1}^{\abs{\mathbf n}_{1} - 1} A_{v_r}\left(t - \Lambda \cdot \mathbf p_v(r)\right)\right] A_k(t - \Lambda \cdot \mathbf n + \Lambda_k) \\
 & = \sum_{k = 1}^N \Xi_{\mathbf n - e_k, t} A_k(t - \Lambda \cdot \mathbf n + \Lambda_k) \qedhere.
\end{split}
\]
\end{proof}

In order to take into account the relations of rational dependence of $\Lambda_1, \dotsc, \Lambda_N \in \mathbb R_+^\ast$ in the representation formula for the solution of $\Sigma_\delta(\Lambda, A)$, we set
\begin{gather*}
Z(\Lambda) = \{\mathbf n \in \mathbb Z^N \;|\: \Lambda \cdot \mathbf n = 0\}, \\
\begin{aligned}
V(\Lambda) & = \{L \in \mathbb R^N \;|\: Z(\Lambda)\subset Z(L)\}, & \quad V_+(\Lambda) & = V(\Lambda) \cap (\mathbb R_+^\ast)^N, \\
W(\Lambda) & = \{L \in \mathbb R^N \;|\: Z(\Lambda)=Z(L)\}, & W_+(\Lambda) & = W(\Lambda) \cap (\mathbb R_+^\ast)^N.
\end{aligned}
\end{gather*}
Notice that $W(\Lambda) \subset V(\Lambda)$ and $W(\Lambda) = \{L \in V(\Lambda) \;|\: V(L) = V(\Lambda)\}$.

The point of view of this paper is to prescribe $\Lambda = (\Lambda_1, \dotsc, \Lambda_N) \in (\mathbb R_+^\ast)^N$ and to describe the rational dependence structure of its components through the sets $Z(\Lambda)$, $V(\Lambda)$, and $W(\Lambda)$. Another possible viewpoint, which is the one used for instance in \cite{Michiels2009Strong}, is to fix $B \in \mathcal M_{N, h}(\mathbb N)$ and consider the delays $\Lambda = (\Lambda_1, \dotsc, \Lambda_N) \in \range B \cap (\mathbb R_+^\ast)^N$. We show in the next proposition that the two points of view are equivalent.

\begin{proposition}
\label{PropRatDep}
Let $\Lambda = (\Lambda_1, \dotsc, \Lambda_N) \in (\mathbb R_+^\ast)^N$. There exist $h \in \llbracket 1, N\rrbracket$, $\ell = (\ell_1, \dotsc, \ell_h) \in (\mathbb R_+^\ast)^h$ with rationally independent components, and $B \in \mathcal M_{N, h}(\mathbb N)$ with $\rank(B) = h$ such that $\Lambda = B \ell$. Moreover, for every $B$ as before, one has
\begin{equation}
\label{VRanB}
\begin{aligned}
V(\Lambda) & = \range B, \\
W(\Lambda) & = \{B(\ell_1^\prime, \dotsc, \ell_h^\prime) \;|\: \ell_1^\prime, \dotsc, \ell_h^\prime \text{ are rationally independent} \}.
\end{aligned}
\end{equation}
In particular, $W(\Lambda)$ is dense and of full measure in $V(\Lambda)$.
\end{proposition}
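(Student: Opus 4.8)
The plan is to establish the proposition in two stages: first constructing the decomposition $\Lambda = B\ell$, and then proving the two identities in \eqref{VRanB}. For the construction, observe that $Z(\Lambda) \subset \mathbb Z^N$ is a subgroup, so the $\mathbb Q$-vector space $Z(\Lambda) \otimes \mathbb Q$ has some dimension $N - h$ with $h \in \llbracket 1, N\rrbracket$ (note $h \geq 1$ since $\Lambda \neq 0$). The rational vector space $V(\Lambda) \cap \mathbb Q^N = \{L \in \mathbb Q^N \mid Z(\Lambda) \subset Z(L)\}$ is precisely the orthogonal complement of $Z(\Lambda)$ inside $\mathbb Q^N$, hence has dimension $h$. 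I would pick a basis of the lattice $V(\Lambda) \cap \mathbb Z^N$ over $\mathbb Z$ and assemble it as the columns of $B \in \mathcal M_{N,h}(\mathbb Z)$; clearing denominators and adjusting signs lets one arrange $B \in \mathcal M_{N,h}(\mathbb N)$ with $\rank(B) = h$. Since $\Lambda$ itself lies in $V(\Lambda)$ and $V(\Lambda)\cap\mathbb Q^N = \range_{\mathbb Q} B$, we have $\Lambda = B\ell$ for a unique $\ell \in \mathbb R^h$ (uniqueness because $\rank B = h$); the components of $\ell$ are positive and rationally independent precisely because $Z(\ell) = Z(\Lambda) \cap \range(B^\top\text{-relations}) = \{0\}$, which one checks from $Z(\Lambda) = \Ker B^\top$ on integer vectors.

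For the first identity $V(\Lambda) = \range B$, the inclusion $\range B \subset V(\Lambda)$ is immediate: if $L = B\ell'$ and $\mathbf n \in Z(\Lambda) = \Ker B^\top \cap \mathbb Z^N$, then $L \cdot \mathbf n = (B\ell') \cdot \mathbf n = \ell' \cdot (B^\top \mathbf n) = 0$, so $\mathbf n \in Z(L)$. For the reverse inclusion, I would argue that $V(\Lambda)$ is the real annihilator $\{L \in \mathbb R^N \mid L \cdot \mathbf n = 0 \text{ for all } \mathbf n \in Z(\Lambda)\}$, which equals the orthogonal complement of the real span of $Z(\Lambda)$; since the columns of $B$ span that same orthogonal complement (this is how $B$ was built), both sides coincide. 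The only subtlety is the passage from the integer lattice $Z(\Lambda)$ to its real span, but this is harmless because the real annihilator of a lattice equals the real annihilator of its $\mathbb Q$-span, which is the span of a $\mathbb Z$-basis.

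For the second identity, I would use the characterization $W(\Lambda) = \{L \in V(\Lambda) \mid V(L) = V(\Lambda)\}$ noted just before the statement. Writing $L = B\ell'$ with $\ell' \in \mathbb R^h$ (legitimate since $L \in V(\Lambda) = \range B$), I claim $V(L) = V(\Lambda)$ if and only if $\ell'$ has rationally independent components. One direction: if the $\ell'_i$ are rationally dependent, there is a nonzero $\mathbf m \in \mathbb Z^h$ with $\ell' \cdot \mathbf m = 0$, and then $B \mathbf m'$-type manipulations produce an integer relation satisfied by $L$ but not by $\Lambda$, forcing $Z(L) \supsetneq Z(\Lambda)$ and hence $V(L) \supsetneq V(\Lambda)$. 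Conversely, rational independence of $\ell'$ gives $Z(L) = Z(\Lambda)$ by the same annihilator bookkeeping, which yields $V(L) = V(\Lambda)$. I expect the cleanest route here is to prove directly that $Z(B\ell') = Z(\Lambda)$ iff $\ell'$ is rationally independent, using that $B^\top$ is injective on $\mathbb Q^h$ (as $\rank B = h$) so that integer relations for $B\ell'$ correspond bijectively to integer relations for $\ell'$.

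The main obstacle is the bookkeeping in the second identity: one must carefully track how a $\mathbb Z$-linear relation $\mathbf n \cdot (B\ell') = 0$ among the components of $L = B\ell'$ translates into a $\mathbb Q$-linear relation among the $\ell'_i$, and this requires knowing that $B$, viewed over $\mathbb Q$, has a right inverse on its range together with the fact that $\Ker B^\top \cap \mathbb Z^N$ is exactly $Z(\Lambda)$ — not merely contained in it. Once these lattice-versus-rational-span identifications are pinned down, the density and full-measure claim is a soft consequence: the rationally independent $h$-tuples form the complement of a countable union of hyperplanes $\{\ell' \mid \ell' \cdot \mathbf m = 0\}$ in $\mathbb R^h$, hence a dense full-measure set, and the linear bijection $\ell' \mapsto B\ell'$ onto $V(\Lambda)$ preserves both density and (up to the natural $h$-dimensional Lebesgue measure on $\range B$) full measure.
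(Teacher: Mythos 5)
You follow the same global skeleton as the paper (construct the pair $(B,\ell)$, then verify \eqref{VRanB}, then get density and full measure as a soft consequence), and the second half of your argument is essentially the paper's: $Z(\Lambda) = \Ker B^{\mathrm{T}} \cap \mathbb Z^N$ from the rational independence of $\ell$, the passage from the lattice $Z(\Lambda)$ to its real span, and the correspondence between integer relations for $L = B\ell^\prime$ and rational relations for $\ell^\prime$. The genuine gap is in the construction of $B$, which is precisely where the paper invests all of its effort. You take a $\mathbb Z$-basis of $V(\Lambda)\cap\mathbb Z^N$ and assert that ``clearing denominators and adjusting signs'' produces $B \in \mathcal M_{N,h}(\mathbb N)$, with positivity of $\ell$ coming along automatically; neither claim is justified, and both fail in general. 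Flipping the sign of a column cannot repair a basis vector whose entries have mixed signs, and rational independence of $\ell$ gives only $\ell_i \neq 0$, never $\ell_i > 0$ (positivity is a separate requirement that your proposal never addresses). Concretely, take $\Lambda = (3, \sqrt2/100,\, 1 + \sqrt2/300)$, so that $Z(\Lambda) = \mathbb Z(1,1,-3)$, $h = 2$, and $V(\Lambda)\cap\mathbb Z^3 = \mathbb Z(1,-1,0)\oplus\mathbb Z(0,3,1)$. The basis $\{(1,-1,0),(0,3,1)\}$ gives the positive coefficients $\ell = (3,\, 1+\sqrt2/300)$ but contains a mixed-sign vector that no sign flip can fix; the nonnegative basis $\{(1,2,1),(2,1,1)\}$ (also a $\mathbb Z$-basis, since the change-of-basis matrix has determinant $-1$) gives $\Lambda = x(1,2,1) + y(2,1,1)$ with $x = -1 + \sqrt2/150 < 0$. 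A correct choice, e.g.\ columns $(3,0,1)$ and $(2,1,1)$, does exist, but it must be adapted to where $\Lambda$ sits inside the cone $V(\Lambda)\cap(\mathbb R_+)^N$: already for $N = 4$ and the single relation $L_1 + L_2 = L_3 + L_4$ that cone has four extremal rays, so no single simplicial cone $B(\mathbb R_+^h)$ contains all admissible $\Lambda$, and any correct construction must use $\Lambda$ itself, not just the lattice.

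This is exactly what the paper's approximation argument accomplishes. It writes $\Lambda = Au$ with $A \in \mathcal M_{N,h}(\mathbb Q)$ of arbitrary sign and $u \in (\mathbb R_+^\ast)^h$ a positive $\mathbb Q$-basis of $\Span_{\mathbb Q}\{\Lambda_1,\dotsc,\Lambda_N\}$, then conjugates by the rational, rationally invertible operators $T_n = P_{u_n} + \frac{1}{n}(\id_h - P_{u_n})$, where $u_n \in (\mathbb Q_+^\ast)^h$ approximates $u$ and $P_v$ denotes the orthogonal projection onto $\mathbb R v$. Since $T_n\alpha_i \to P_u\alpha_i = \Lambda_i u/\abs{u}_2^2$, which has positive entries, the rows $T_n\alpha_i$ are eventually nonnegative and rational, while $\ell = (T_n^{-1})^{\mathrm{T}}u$ is eventually positive and automatically rationally independent; this yields nonnegativity of $B$ and positivity of $\ell$ \emph{simultaneously}, which is the step missing from your proposal. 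Three smaller repairs for your second half: (i) since the ``moreover'' clause quantifies over \emph{every} admissible $B$, the inclusion $V(\Lambda)\subset\range B$ cannot appeal to ``how $B$ was built''; argue instead that the columns of any admissible $B$ lie in $V(\Lambda)$ (again by rational independence of $\ell$) and that $\rank B = h = \dim_{\mathbb R}V(\Lambda)$; (ii) the property of $B$ you need over $\mathbb Q$ is surjectivity of $B^{\mathrm{T}}\colon \mathbb Q^N \to \mathbb Q^h$ (equivalently, injectivity of $B$), not ``injectivity of $B^{\mathrm{T}}$ on $\mathbb Q^h$''; (iii) enlarging $Z$ shrinks $V$, so rational dependence of $\ell^\prime$ forces $V(L)\subsetneq V(\Lambda)$ rather than $V(L)\supsetneq V(\Lambda)$ --- harmless here, since either strict inclusion rules out $L \in W(\Lambda)$.
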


\begin{proof}
Let $\mathsf V = \Span_{\mathbb Q} \{\Lambda_1, \dotsc, \Lambda_N\}$, $h = \dim_{\mathbb Q} \mathsf V$, and $\{\lambda_1, \dotsc, \lambda_h\}$ be a basis of $\mathsf V$ with positive elements, so that $\Lambda = A u$ for some $A = (a_{ij}) \in \mathcal M_{N, h}(\mathbb Q)$ and $u = (\lambda_1, \dotsc, \lambda_h) \in (\mathbb R_+^\ast)^h$. For $v \in \mathbb R^h \setminus \{0\}$, we denote by $P_v$ the orthogonal projection in the direction of $v$, i.e., $P_v = {v v^\mathrm{T}}/{\abs{v}_2^2}$.

Since $\mathbb Q^h$ is dense in $\mathbb R^h$, there exists a sequence of vectors $u_n = (r_{1, n}, \dotsc, r_{h, n})$ in $(\mathbb Q_+^\ast)^h$ converging to $u$ as $n \to +\infty$, and we can further assume that the sequence is chosen in such a way that $\abs{P_{u_n} - P_u}_2 \leq 1/n^2$ for every $n \in \mathbb N^\ast$.

For $n \in \mathbb N^\ast$, we define $T_n = P_{u_n} + \frac{1}{n} \left(\id_h - P_{u_n}\right)$. This operator is invertible, with inverse $T_n^{-1} = P_{u_n} + n \left(\id_h - P_{u_n}\right)$. Furthermore, both $T_n$ and $T_n^{-1}$ belong to $\mathcal M_h(\mathbb Q)$. For $i \in \llbracket 1, h\rrbracket$, we have
\[
(T_n^{-1} e_i)^\mathrm{T} u = e_i^{\mathrm{T}} P_{u_n} u + n e_i^{\mathrm{T}} (\id_h - P_{u_n}) u = e_i^{\mathrm{T}} P_{u_n} u + n e_i^{\mathrm{T}} (P_u - P_{u_n}) u
\]
and thus $(T_n^{-1} e_i)^{\mathrm{T}} u \to e_i^{\mathrm{T}} u = \lambda_i$ as $n \to +\infty$. Since $\lambda_1, \dotsc, \lambda_h > 0$, there exists $n_0 \in \mathbb N^\ast$ such that
\begin{equation}
\label{TnEiPositif}
(T_n^{-1} e_i)^{\mathrm{T}} u > 0, \qquad \forall i \in \llbracket 1, h\rrbracket,\; \forall n \geq n_0.
\end{equation}

For $i \in \llbracket 1, N\rrbracket$, let $\alpha_i = (a_{ij})_{j \in \llbracket 1, h\rrbracket} \in \mathbb Q^h$. For each $i \in \llbracket 1, N\rrbracket$, we construct the sequence $\left(\alpha_{i, n}\right)_{n \in \mathbb N^\ast}$ in $\mathbb Q^h$ by setting $\alpha_{i, n} = T_n \alpha_i$. It follows from the definition of $T_n$ that $\alpha_{i, n}$ converges to $P_u \alpha_i = \frac{u u^{\mathrm{T}} \alpha_i}{\abs{u}_2^2}$ as $n \to +\infty$. Since $u^{\mathrm{T}} \alpha_i = \sum_{j=1}^h a_{ij} \lambda_j = \Lambda_i > 0$ and the components of $u$ are positive, we conclude that there exists $n_1 \geq n_0$ such that $\alpha_{i, n_1} \in (\mathbb Q_+)^h$ for every $i \in \llbracket 1, N\rrbracket$.

Let $\ell = (T_{n_1}^{-1})^{\mathrm{T}} u$. By \eqref{TnEiPositif}, $\ell_i = (T_{n_1}^{-1} e_i)^{\mathrm{T}} u > 0$ for every $i \in \llbracket 1, h\rrbracket$. Since the components of $u$ are rationally independent, $\ell_1, \dotsc, \ell_h$ are also rationally independent. Let $b_{ij} \in \mathbb Q_+$, $i \in \llbracket 1, N\rrbracket$, $j \in \llbracket 1, h\rrbracket$, be such that $\alpha_{i, n_1} = (b_{ij})_{j \in \llbracket 1, h\rrbracket}$. Hence, for $i \in \llbracket 1, N\rrbracket$,
\[\Lambda_i = u^{\mathrm{T}} \alpha_i = u^{\mathrm{T}} T_{n_1}^{-1} \alpha_{i, n_1} = \sum_{j=1}^h b_{ij} u^{\mathrm{T}} T_{n_1}^{-1} e_j = \sum_{j=1}^h b_{ij} \ell_j.\]
We then get the required result up to multiplying $B = (b_{ij})$ by a large integer and modifying $\ell$ in accordance.

We next prove that \eqref{VRanB} holds for every $B$ as before. (Our proof actually holds for every $B \in \mathcal M_{N, h}(\mathbb Q)$ with $\rank(B) = h$ such that $\Lambda = B \ell$ for some $\ell \in (\mathbb R_+^\ast)^h$ with rationally independent components.) First notice that $Z(\Lambda) = \{\mathbf n \in \mathbb Z^N \;|\: \mathbf n \in \Ker B^{\mathrm{T}}\}$. Indeed, $\mathbf n \in Z(\Lambda)$ if and only if $\mathbf n$ is perpendicular in $\mathbb R^N$ to $B \ell$, which is equivalent to $\mathbf n^{\mathrm{T}} B = 0$ since $\ell_1, \dotsc, \ell_h$ are rationally independent. Moreover, remark that $\Ker B^{\mathrm{T}} = (\range B)^\perp$ admits a basis with integer coefficients since $\range B$ admits such a basis. To see that, it is enough to complete any basis of $\range B$ in $\mathbb Q^N$ by $N - h$ vectors in $\mathbb Q^N$ and find a basis of $(\range B)^\perp$ by Gram--Schmidt orthogonalization. We finally deduce that $\Span_{\mathbb R} (Z(\Lambda)) = (\range B)^\perp$. Since by definition $V(\Lambda) = Z(\Lambda)^\perp$, we conclude that $V(\Lambda) = \range B$. As regards the characterization of $W(\Lambda)$, an argument goes as follows. Let $L \in V(\Lambda)$, so that $L = B \ell^\prime$ for a certain $\ell^\prime \in \mathbb R^h$. The components of $\ell^\prime$ are rationally dependent if and only if $\dim_{\mathbb Q}\Span_{\mathbb Q}\{L_1, \dotsc, L_N\} < h$, i.e., $\dim_{\mathbb R} V(L) < \dim_{\mathbb R} V(\Lambda)$, which holds if and only if $L \notin W(\Lambda)$.
\end{proof}

We introduce the following additional definitions.

\begin{definition}
\label{DefiEquivalence}
Let $\Lambda = (\Lambda_1, \dotsc, \Lambda_N) \in (\mathbb R_+^\ast)^N$. We partition $\llbracket 1, N\rrbracket$ and $\mathbb Z^N$ according to the equivalence relations $\sim$ and $\approx$ defined as follows: $i \sim j$ if $\Lambda_i = \Lambda_j$ and $\mathbf n \approx \mathbf n^\prime$ if $\Lambda \cdot \mathbf n = \Lambda \cdot \mathbf n^\prime$. We use $[\cdot]$ to denote equivalence classes of both $\sim$ and $\approx$ and we set $\mathcal J = \llbracket 1, N\rrbracket / \sim$ and $\mathcal Z = \mathbb Z^N / \approx$.

For $A: \mathbb R \to \mathcal M_d(\mathbb C)^N$, $L \in V_+(\Lambda)$, $[\mathbf n] \in \mathcal Z$, $[i] \in \mathcal J$, and $t \in \mathbb R$, we define
\begin{equation}
\label{EqDefiXiHat}
\widehat\Xi_{[\mathbf n], t}^{L, \Lambda, A} = \sum_{\mathbf n^\prime \in [\mathbf n]} \Xi_{\mathbf n^\prime, t}^{L, A}, \qquad
\widehat A_{[i]}^\Lambda(t) = \sum_{j \in [i]} A_j(t),
\end{equation}
and
\begin{equation}
\label{EqDefiTheta}
\Theta_{[\mathbf n], t}^{L, \Lambda, A} = \sum_{\substack{[j] \in \mathcal J \\ L \cdot \mathbf n - L_j \leq t}} \widehat\Xi_{[\mathbf n - e_j], t}^{L, \Lambda, A} \widehat A_{[j]}^\Lambda(t - L \cdot \mathbf n + L_j).
\end{equation}
\end{definition}

\begin{remark}
\label{RemkLambdaPrime}
The expression for $\widehat\Xi_{[\mathbf n], t}^{L, \Lambda, A}$ given in \eqref{EqDefiXiHat} is well-defined since, thanks to \eqref{DefiXi}, all terms in the sum are equal to zero except finitely many. The expression for $\Theta_{[\mathbf n], t}^{L, \Lambda, A}$ given in \eqref{EqDefiTheta} is also well-defined since, for every $L \in V_+(\Lambda)$, if $i \sim j$ and $\mathbf n \approx \mathbf n^\prime$, one has $L_i = L_j$ and $L \cdot \mathbf n = L \cdot \mathbf n^\prime$. In addition, notice that $\widehat\Xi_{[\mathbf n], t}^{L, \Lambda, A} \not = 0$ only if $[\mathbf n] \cap \mathbb N^N$ is nonempty, and, similarly, $\Theta_{[\mathbf n], t}^{L, \Lambda, A} \not = 0$ only if $[\mathbf n] \cap (\mathbb N^N \setminus \{0\})$ is nonempty. Another consequence of the above fact and \eqref{EqDefiTheta} is that $\Theta_{[\mathbf n], t}^{L, \Lambda, A} \not = 0$ only if $t \geq 0$, since $[\mathbf n - e_j] \cap \mathbb N^N = \emptyset$ whenever $[\mathbf n] \in \mathcal Z$ and $[j] \in \mathcal J$ are such that $L \cdot \mathbf n - L_j < 0$.

Notice, moreover, that the matrices $\widehat\Xi$, $\widehat A$ and $\Theta$ depend on $\Lambda$ only through $Z(\Lambda)$. Hence, if $\Lambda^\prime \in W_+(\Lambda)$ (i.e., $Z(\Lambda^\prime) = Z(\Lambda)$), then
\[\widehat\Xi_{[\mathbf n], t}^{L, \Lambda, A} = \widehat\Xi_{[\mathbf n], t}^{L, \Lambda^\prime, A}, \qquad \widehat A_{[i]}^\Lambda(t) = \widehat A_{[i]}^{\Lambda^\prime}(t), \qquad \Theta_{[\mathbf n], t}^{L, \Lambda, A} = \Theta_{[\mathbf n], t}^{L, \Lambda^\prime, A}.\]
\end{remark}

From now on, we fix $\Lambda = (\Lambda_1, \dotsc, \Lambda_N) \in (\mathbb R_+^\ast)^N$ and our goal consists of deriving a suitable representation for the solutions of $\Sigma_{\delta}(L, A)$ for every $L \in V_+(\Lambda)$. Even though the above definitions depend on $\Lambda$, $L \in V_+(\Lambda)$ and $A$, we will sometimes omit (part of) this dependence from the notations when there is no risk of confusion. 

Let us now provide further expressions for $\widehat\Xi_{[\mathbf n], t}^{L, \Lambda, A}$.

\begin{proposition}
\label{PropXiHatRecurrence}
For every $L \in V_+(\Lambda)$, $A: \mathbb R \to \mathcal M_d(\mathbb C)^N$, $\mathbf n \in \mathbb N^N \setminus \{0\}$, and $t \in \mathbb R$, we have
\[
\widehat\Xi_{[\mathbf n], t}^{L, \Lambda, A} = \sum_{[j] \in \mathcal J} \widehat A_{[j]}^\Lambda(t) \widehat\Xi_{[\mathbf n - e_j], t - L_j}^{L, \Lambda, A}, \qquad \widehat\Xi_{[\mathbf n], t}^{L, \Lambda, A} = \sum_{[j] \in \mathcal J} \widehat\Xi_{[\mathbf n - e_j], t}^{L, \Lambda, A} \widehat A_{[j]}^\Lambda(t - L \cdot \mathbf n + L_j),
\]
and
\begin{equation}
\label{EqExplicitXiHat}
\widehat\Xi_{[\mathbf n], t}^{L, \Lambda, A} = \sum_{\mathbf n^\prime \in [\mathbf n] \cap \mathbb N^N} \sum_{v \in V_{\mathbf n^\prime}} \prod_{k=1}^{\abs{\mathbf n^\prime}_{1}} A_{v_k}\left(t - L \cdot \mathbf p_v(k)\right).
\end{equation}
\end{proposition}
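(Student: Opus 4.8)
The plan is to deduce all three identities from the corresponding facts about the unhatted coefficients $\Xi^{L,A}$ recorded in \eqref{XiExplicit}, \eqref{DefiXi} and \eqref{XiRecurrence}, combined with a repackaging of finite sums along the two equivalence relations $\sim$ and $\approx$. The governing observation is that, since $\mathbf n \in \mathbb N^N \setminus \{0\}$ and $\Lambda \in (\mathbb R_+^\ast)^N$ force $\Lambda \cdot \mathbf n > 0$, one has $0 \notin [\mathbf n]$; hence in $\widehat\Xi_{[\mathbf n], t}^{L,\Lambda,A} = \sum_{\mathbf n^\prime \in [\mathbf n]} \Xi_{\mathbf n^\prime, t}^{L,A}$ only the (finitely many, by \eqref{DefiXi}) terms with $\mathbf n^\prime \in [\mathbf n] \cap \mathbb N^N$ can be nonzero. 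Formula \eqref{EqExplicitXiHat} then drops out immediately by substituting the closed expression \eqref{XiExplicit} for each such $\Xi_{\mathbf n^\prime, t}^{L,A}$ into the definition \eqref{EqDefiXiHat}.

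For the two recurrences the scheme is identical, using \eqref{DefiXi} for the first and \eqref{XiRecurrence} for the second; I would start with the first. Applying \eqref{DefiXi} termwise inside the definition of $\widehat\Xi_{[\mathbf n], t}^{L,\Lambda,A}$ — noting that $\Xi_{\mathbf n^\prime,t}^{L,A} = \sum_{k=1}^N A_k(t)\,\Xi_{\mathbf n^\prime-e_k,t-L_k}^{L,A}$ in fact holds for all $\mathbf n^\prime \in \mathbb Z^N \setminus \{0\}$, since when $\mathbf n^\prime \notin \mathbb N^N$ both sides vanish — and exchanging the two finite sums leaves $\sum_{k=1}^N A_k(t) \sum_{\mathbf n^\prime \in [\mathbf n]} \Xi_{\mathbf n^\prime-e_k,t-L_k}^{L,A}$. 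The crux is to recognize the inner sum: the translation $\mathbf n^\prime \mapsto \mathbf n^\prime - e_k$ is a bijection of $[\mathbf n]$ onto $[\mathbf n-e_k]$, because $\Lambda \cdot (\mathbf n^\prime-e_k) = \Lambda\cdot\mathbf n^\prime - \Lambda_k$ shows $\mathbf n^\prime \approx \mathbf n$ if and only if $\mathbf n^\prime-e_k \approx \mathbf n-e_k$; thus the inner sum equals $\widehat\Xi_{[\mathbf n-e_k],t-L_k}^{L,\Lambda,A}$. Finally I would collapse the index $k \in \llbracket 1,N\rrbracket$ into classes $[j] \in \mathcal J$: for $k \sim k^\prime$ one has $\Lambda_k = \Lambda_{k^\prime}$, so $[\mathbf n-e_k]=[\mathbf n-e_{k^\prime}]$, and since $e_k-e_{k^\prime} \in Z(\Lambda) \subset Z(L)$ by $L \in V_+(\Lambda)$ also $L_k=L_{k^\prime}$; summing the $A_k(t)$ over each class produces $\widehat A_{[j]}^\Lambda(t)$ and yields the first identity.

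The second recurrence follows the same three moves starting from \eqref{XiRecurrence} in the form $\Xi_{\mathbf n^\prime,t}^{L,A}=\sum_{k=1}^N \Xi_{\mathbf n^\prime-e_k,t}^{L,A}\,A_k(t-L\cdot\mathbf n^\prime+L_k)$, again extended by zero to $\mathbf n^\prime\in\mathbb Z^N\setminus\{0\}$. The one extra ingredient is to rewrite the argument $t-L\cdot\mathbf n^\prime+L_k$ as $t-L\cdot\mathbf n+L_k$ before exchanging sums; this is legitimate precisely because $\mathbf n^\prime\approx\mathbf n$ gives $\mathbf n^\prime-\mathbf n\in Z(\Lambda)\subset Z(L)$, whence $L\cdot\mathbf n^\prime=L\cdot\mathbf n$. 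The same translation bijection between $[\mathbf n]$ and $[\mathbf n-e_k]$ and the same regrouping into classes $[j]$ then deliver the stated formula. I expect the only genuine difficulty to be the simultaneous bookkeeping of the two distinct equivalence relations — $\approx$ on the multi-indices $\mathbb Z^N$ and $\sim$ on the delay indices $\llbracket 1,N\rrbracket$ — together with the checks that $[\mathbf n-e_k]$ and $L_k$ depend only on $[k]$; once the translation bijection is isolated, the remainder is a routine exchange-and-regroup of finite sums.
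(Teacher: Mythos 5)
Your proposal is correct and follows essentially the same route as the paper's proof: expand $\widehat\Xi_{[\mathbf n], t}$ via \eqref{EqDefiXiHat}, apply the recursions \eqref{DefiXi} and \eqref{XiRecurrence} termwise, use the translation bijection $\mathbf n^\prime \mapsto \mathbf n^\prime - e_k$ from $[\mathbf n]$ onto $[\mathbf n - e_k]$, regroup the indices $k$ into classes $[j] \in \mathcal J$, and obtain \eqref{EqExplicitXiHat} by direct substitution of \eqref{XiExplicit}. Your explicit verifications (that $0 \notin [\mathbf n]$, that the recursions extend by zero to all of $\mathbb Z^N \setminus \{0\}$, and that $L \cdot \mathbf n^\prime = L \cdot \mathbf n$ and $L_k = L_{k^\prime}$ hold on the relevant classes because $Z(\Lambda) \subset Z(L)$) are precisely the points the paper uses implicitly, stated slightly more carefully.
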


\begin{proof}
We have, by Definition \ref{DefiEquivalence} and Equation \eqref{DefiXi}, that
\[
\begin{split}
\widehat\Xi_{[\mathbf n], t} & = \sum_{\mathbf n^\prime \in [\mathbf n]} \Xi_{\mathbf n^\prime, t} = \sum_{\mathbf n^\prime \in [\mathbf n]} \sum_{j=1}^N A_j(t) \Xi_{\mathbf n^\prime - e_j, t - L_j} = \sum_{j=1}^N A_j(t) \sum_{\mathbf n^\prime \in [\mathbf n]} \Xi_{\mathbf n^\prime - e_j, t - L_j} \\
 & = \sum_{j=1}^N A_j(t) \sum_{\mathbf m \in [\mathbf n - e_j]} \Xi_{\mathbf m, t - L_j} = \sum_{j=1}^N A_j(t) \widehat\Xi_{[\mathbf n - e_j], t - L_j} \\
 & = \sum_{[j] \in \mathcal J} \sum_{i \in [j]} A_i(t) \widehat\Xi_{[\mathbf n - e_i], t - L_i} = \sum_{[j] \in \mathcal J} \left(\sum_{i \in [j]} A_i(t)\right) \widehat\Xi_{[\mathbf n - e_j], t - L_j} \\
 & = \sum_{[j] \in \mathcal J} \widehat A_{[i]}(t) \widehat\Xi_{[\mathbf n - e_j], t - L_j}.
\end{split}
\]
The second expression is obtained similarly from Definition \ref{DefiEquivalence} and Equation \eqref{XiRecurrence} and the last one follows immediately from \eqref{XiExplicit} and \eqref{EqDefiXiHat}.
\end{proof}

Let us give a first representation for solutions of $\Sigma_\delta(L, A)$.

\begin{lemma}
Let $L \in (\mathbb R_+^\ast)^N$, $A = (A_1, \dotsc, A_N): \mathbb R \to \mathcal M_d(\mathbb C)^N$, and $u_0: \left[-L_{\max}, 0\right) \to \mathbb C^d$. The corresponding solution $u: \left[-L_{\max}, +\infty\right) \to \mathbb C^d$ of $\Sigma_{\delta}(L, A)$ is given for $t \geq 0$ by
\begin{equation}
\label{ExplicitU}
u(t) = \sum_{\substack{(\mathbf n, j) \in \mathbb N^N \times \llbracket 1, N\rrbracket \\ -L_j \leq t - L \cdot \mathbf n < 0}} \Xi_{\mathbf n - e_j, t}^{L, A} A_j(t - L \cdot \mathbf n + L_j) u_0(t - L \cdot \mathbf n).
\end{equation}
\end{lemma}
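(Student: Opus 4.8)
The plan is to prove \eqref{ExplicitU} by induction on $m = \lfloor t/L_{\min}\rfloor$, unfolding the difference equation \eqref{SystDelay} one step at a time and recognizing the coefficients $\Xi$ through their defining recursion \eqref{DefiXi}. Write $\tilde u(t)$ for the right-hand side of \eqref{ExplicitU}. For the base case $t \in [0, L_{\min})$ one has $t - L_k < 0$ for every $k \in \llbracket 1, N\rrbracket$, so \eqref{SolSmallTime} gives $u(t) = \sum_{k=1}^N A_k(t) u_0(t - L_k)$. On the other hand, the only pairs $(\mathbf n, j)$ meeting the summation condition $-L_j \le t - L \cdot \mathbf n < 0$ with $\mathbf n - e_j \in \mathbb N^N$ are $(\mathbf n, j) = (e_j, j)$: writing $\mathbf n = e_j + \mathbf m$ with $\mathbf m \in \mathbb N^N$, the condition forces $L \cdot \mathbf m \le t < L_{\min}$, hence $\mathbf m = 0$. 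Each such pair contributes $\Xi_{0,t}\, A_j(t) u_0(t - L_j) = A_j(t) u_0(t - L_j)$, so $\tilde u(t) = u(t)$ and the case $m = 0$ is settled.

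For the inductive step I would fix $t$ with $\lfloor t/L_{\min}\rfloor = m \ge 1$ and split the difference equation $u(t) = \sum_{k=1}^N A_k(t) u(t - L_k)$ according to the sign of $t - L_k$. When $t - L_k < 0$ the term equals $A_k(t) u_0(t - L_k)$, which is exactly the contribution of the pair $(e_k, k)$ to $\tilde u(t)$, since $\Xi_{0,t} = \id_d$ and $-L_k \le t - L_k < 0$. When $t - L_k \ge 0$ one has $0 \le t - L_k < m L_{\min}$, so the induction hypothesis applies to $u(t - L_k)$; substituting it and reindexing the summation multi-index by $\mathbf n = \mathbf m + e_k$ turns the condition $-L_j \le (t - L_k) - L \cdot \mathbf m < 0$ into $-L_j \le t - L \cdot \mathbf n < 0$, keeps the factor $A_j(t - L\cdot\mathbf n + L_j) u_0(t - L\cdot\mathbf n)$ (which is independent of $k$), and turns the coefficient into $\Xi_{\mathbf n - e_j - e_k, t - L_k}$. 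Terms with $n_k = 0$ vanish automatically, since then $\mathbf n - e_j - e_k \notin \mathbb N^N$, so relaxing the constraint $\mathbf n \ge e_k$ loses nothing.

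Collecting the two cases, the coefficient attached to a pair $(\mathbf n, j)$ with $\mathbf n \ne e_j$ becomes $\sum_{k:\, t - L_k \ge 0} A_k(t)\, \Xi_{\mathbf n - e_j - e_k, t - L_k}$, whereas the recursion \eqref{DefiXi} gives $\Xi_{\mathbf n - e_j, t} = \sum_{k=1}^N A_k(t)\, \Xi_{\mathbf n - e_j - e_k, t - L_k}$. It therefore remains to check that the missing terms, those with $t - L_k < 0$, vanish; this is the only genuinely delicate point, and it is where the summation condition is used. Indeed, if $\Xi_{\mathbf n - e_j - e_k, t - L_k} \ne 0$ then $\mathbf n \ge e_j + e_k$, so $L \cdot \mathbf n \ge L_j + L_k$, while the condition $t - L \cdot \mathbf n \ge -L_j$ gives $L \cdot \mathbf n \le t + L_j$; together these force $L_k \le t$, i.e. $t - L_k \ge 0$, a contradiction. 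Hence the coefficient of each pair with $\mathbf n \ne e_j$ is exactly $\Xi_{\mathbf n - e_j, t}$, the pairs $(e_k, k)$ supply the $\Xi_{0,t} = \id_d$ terms coming from the case $t - L_k < 0$, and summing over all pairs yields \eqref{ExplicitU}. (Equivalently, the same computation shows that $\tilde u$, extended by $u_0$ on $[-L_{\max}, 0)$, solves \eqref{SystDelay} with initial datum $u_0$, so one may instead conclude by the uniqueness part of Proposition \ref{PropExistUnique}.)
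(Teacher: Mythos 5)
Your proof is correct and follows essentially the same route as the paper: the computational core --- unfolding \eqref{SystDelay} one step, reindexing $\mathbf n = \mathbf m + e_k$, invoking the recursion \eqref{DefiXi}, and checking that the terms with $t < L_k$ vanish because the summation condition forces $L_k \leq t$ --- is exactly the content of the bijections $\varphi_1, \varphi_2$ in the paper's argument. The only difference is bookkeeping: the paper verifies that the right-hand side of \eqref{ExplicitU} satisfies the equation and concludes by uniqueness, whereas you run the same computation as an induction on $\lfloor t/L_{\min}\rfloor$ (and you note the uniqueness variant yourself).
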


\begin{proof}
Let $u: \left[-L_{\max}, +\infty\right) \to \mathbb C^d$ be given for $t \geq 0$ by \eqref{ExplicitU} and $u(t) = u_0(t)$ for $t \in \left[-L_{\max}, 0\right)$. Fix $t \geq 0$ and notice that
\begin{align}
 & \sum_{j=1}^N A_j(t) u(t - L_j) \notag \\
 = & \sum_{\substack{j = 1 \\ t \geq L_j}}^N\; \sum_{\substack{(\mathbf n, k) \in \mathbb N^N \times \llbracket 1, N\rrbracket \\ -L_k \leq t - L_j - L \cdot \mathbf n < 0 \\ n_k \geq 1}} A_j(t) \Xi_{\mathbf n - e_k, t - L_j}^{L, A} A_k(t - L_j - L \cdot \mathbf n + L_k) u_0(t - L_j - L \cdot \mathbf n) \notag \\
 & {} + \sum_{\substack{j = 1 \\ t < L_j}}^N A_j(t) u_0(t - L_j). \label{ExplicitExpansion1}
\end{align}
Consider the sets
\[B_1(t) = \{(\mathbf n, k, j) \in \mathbb N^N \times \llbracket 1, N\rrbracket^2 \;|\: t \geq L_j,\; -L_k \leq t - L_j - L \cdot \mathbf n < 0,\; n_k \geq 1\},\]
\[B_2(t) = \{j \in \llbracket 1, N\rrbracket \;|\: t < L_j\},\]
\[C_1(t) = \{(\mathbf n, k, j) \in \mathbb N^N \times \llbracket 1, N\rrbracket^2 \;|\: -L_k \leq t - L \cdot \mathbf n < 0,\; n_k \geq 1,\; n_j \geq 1 + \delta_{jk},\; \mathbf n \not = e_k\},\]
\[C_2(t) = \{(\mathbf n, k) \in \mathbb N^N \times \llbracket 1, N\rrbracket \;|\: -L_k \leq t - L \cdot \mathbf n < 0,\; \mathbf n = e_k\},\]
and the functions $\varphi_i: B_i(t) \to C_i(t)$, $i \in \{1, 2\}$, given by
\[\varphi_1(\mathbf n, k, j) = (\mathbf n + e_j, k, j), \qquad \varphi_2(j) = (e_j, j).\]
One can check that $\varphi_1$ and $\varphi_2$ are well-defined and injective. We claim that they are also bijective. For the surjectivity of $\varphi_1$, we take $(\mathbf n, k, j) \in C_1(t)$ and set $\mathbf m = \mathbf n - e_j$. Since $n_j \geq 1$, one has $\mathbf m \in \mathbb N^N$. Since $n_k \geq 1$, $n_j \geq 1 + \delta_{jk}$, one has $t \geq L \cdot \mathbf n - L_k \geq L_j + L_k - L_k = L_j$. The inequalities $-L_k \leq t - L_j - L \cdot \mathbf m < 0$ and $n_k \geq 1$ are trivially satisfied, and thus $(\mathbf m, k, j) \in B_1(t)$, which shows the surjectivity of $\varphi_1$ since one clearly has $\varphi_1(\mathbf m, k, j) = (\mathbf n, k, j)$. For the surjectivity of $\varphi_2$, we take $(\mathbf n, k) \in C_2(t)$, which then satisfies $\mathbf n = e_k$ and $t < L \cdot \mathbf n = L_k$. This shows that $k \in B_2(t)$ and, since $\varphi_2(k) = (\mathbf n, k)$, we obtain that $\varphi_2$ is surjective.

Thanks to the bijections $\varphi_1$, $\varphi_2$, and \eqref{DefiXi}, \eqref{ExplicitExpansion1} becomes
\begin{align*}
& \sum_{j=1}^N A_j(t) u(t - L_j) \displaybreak[0] \\
 = & \sum_{\substack{(\mathbf n, k) \in \mathbb N^N \times \llbracket 1, N\rrbracket \\ -L_k \leq t - L \cdot \mathbf n < 0 \\ n_k \geq 1,\; \mathbf n \not = e_k}} \sum_{\substack{j = 1 \\ n_j \geq 1 + \delta_{jk}}}^N A_j(t) \Xi_{\mathbf n - e_k - e_j, t - L_j}^{L, A} A_k(t - L \cdot \mathbf n + L_k) u_0(t - L \cdot \mathbf n) \\
& {} + \sum_{\substack{(\mathbf n, k) \in \mathbb N^N \times \llbracket 1, N\rrbracket\\ -L_k \leq t - L \cdot \mathbf n < 0,\\ \mathbf n = e_k}} A_k(t - L \cdot \mathbf n + L_k) u_0(t - L \cdot \mathbf n) \displaybreak[0] \\
 = & \sum_{\substack{(\mathbf n, k) \in \mathbb N^N \times \llbracket 1, N\rrbracket \\ -L_k \leq t - L \cdot \mathbf n < 0 \\ n_k \geq 1,\; \mathbf n \not = e_k}} \Xi_{\mathbf n - e_k, t}^{L, A} A_k(t - L \cdot \mathbf n + L_k) u_0(t - L \cdot \mathbf n) \\
& {} + \sum_{\substack{(\mathbf n, k) \in \mathbb N^N \times \llbracket 1, N\rrbracket\\ -L_k \leq t - L \cdot \mathbf n < 0,\\ \mathbf n = e_k}} \Xi_{0, t}^{L, A} A_k(t - L \cdot \mathbf n + L_k) u_0(t - L \cdot \mathbf n) \displaybreak[0] \\
 = & \sum_{\substack{(\mathbf n, k) \in \mathbb N^N \times \llbracket 1, N\rrbracket \\ -L_k \leq t - L \cdot \mathbf n < 0}} \Xi_{\mathbf n - e_k, t}^{L, A} A_k(t - L \cdot \mathbf n + L_k) u_0(t - L \cdot \mathbf n) = u(t),
\end{align*}
which shows that $u$ satisfies \eqref{SystDelay}.
\end{proof}

We can now give the main result of this section.

\begin{proposition}
\label{PropSolExpliciteAdpBis}
Let $\Lambda \in (\mathbb R_+^\ast)^N$, $L \in V_+(\Lambda)$, $A: \mathbb R \to \mathcal M_d(\mathbb C)^N$, and $u_0: \left[-L_{\max}, 0\right) \to \mathbb C^d$. The corresponding solution $u: \left[-L_{\max}, +\infty\right) \to \mathbb C^d$ of $\Sigma_{\delta}(L, A)$ is given for $t \geq 0$ by
\begin{equation}
\label{ExplicitUAdpBis}
u(t) = \sum_{\substack{[\mathbf n] \in \mathcal Z \\ t < L \cdot \mathbf n \leq t + L_{\max}}} \Theta_{[\mathbf n], t}^{L, \Lambda, A} u_0(t - L \cdot \mathbf n),
\end{equation}
where the coefficients $\Theta$ are defined in \eqref{EqDefiTheta}.
\end{proposition}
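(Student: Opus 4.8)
The plan is to derive \eqref{ExplicitUAdpBis} from the representation \eqref{ExplicitU} by grouping its terms according to the equivalence class $[\mathbf n] \in \mathcal Z$ of the summation index. The crucial preliminary observation, already recorded in Remark \ref{RemkLambdaPrime}, is that because $L \in V_+(\Lambda)$, i.e. $Z(\Lambda) \subset Z(L)$, both the quantity $L \cdot \mathbf n$ and the delays $L_j$ descend to functions of the classes: $\mathbf n \approx \mathbf n'$ forces $\mathbf n - \mathbf n' \in Z(\Lambda) \subset Z(L)$ and hence $L \cdot \mathbf n = L \cdot \mathbf n'$, while $i \sim j$ forces $e_i - e_j \in Z(\Lambda) \subset Z(L)$ and hence $L_i = L_j$. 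This is exactly what makes the right-hand side of \eqref{ExplicitUAdpBis} well-defined and what allows the factor $u_0(t - L \cdot \mathbf n)$ to be pulled out of each class.

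Concretely, I would first partition the sum in \eqref{ExplicitU} over $\mathbf n \in \mathbb N^N$ into its $\approx$-classes. Since $L \cdot \mathbf n'$ is constant (equal to $L \cdot \mathbf n$) as $\mathbf n'$ ranges over $[\mathbf n]$, the argument of $u_0$ is constant on each class, so
\[
u(t) = \sum_{[\mathbf n] \in \mathcal Z} \Bigl( \sum_{\substack{\mathbf n' \in [\mathbf n],\, j \in \llbracket 1, N\rrbracket \\ -L_j \leq t - L \cdot \mathbf n' < 0}} \Xi_{\mathbf n' - e_j, t}^{L, A}\, A_j(t - L \cdot \mathbf n' + L_j) \Bigr) u_0(t - L \cdot \mathbf n).
\]
The inner constraint forces $t < L \cdot \mathbf n \leq t + L_j \leq t + L_{\max}$ whenever the inner sum is nonempty, so restricting the outer index to the range $t < L \cdot \mathbf n \leq t + L_{\max}$, as in \eqref{ExplicitUAdpBis}, discards only classes whose coefficient vanishes. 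It then remains to identify the inner coefficient with $\Theta_{[\mathbf n], t}^{L, \Lambda, A}$.

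For this I would, for each fixed $j$, perform the reindexing $\mathbf m = \mathbf n' - e_j$, observing that $\mathbf n' \in [\mathbf n]$ is equivalent to $\mathbf m \in [\mathbf n - e_j]$ (since $\Lambda \cdot \mathbf n' = \Lambda \cdot \mathbf n$ iff $\Lambda \cdot \mathbf m = \Lambda \cdot (\mathbf n - e_j)$). Using $L \cdot \mathbf n' = L \cdot \mathbf n$ to rewrite both the constraint and the argument of $A_j$ in terms of $\mathbf n$, and dropping the now-vacuous part $t < L \cdot \mathbf n$ of the constraint (valid on the outer range), the inner sum becomes $\sum_{j:\, L \cdot \mathbf n - L_j \leq t} \bigl(\sum_{\mathbf m \in [\mathbf n - e_j]} \Xi_{\mathbf m, t}^{L, A}\bigr) A_j(t - L \cdot \mathbf n + L_j)$. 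The parenthesized factor is exactly $\widehat\Xi_{[\mathbf n - e_j], t}^{L, \Lambda, A}$ by \eqref{EqDefiXiHat}. Finally I would collect the sum over $j$ into $\sim$-classes $[j] \in \mathcal J$: for $i \sim j$ one has $L_i = L_j$, the constraint and the argument $t - L \cdot \mathbf n + L_j$ coincide, and $[\mathbf n - e_i] = [\mathbf n - e_j]$, so the $\widehat\Xi$-factor is common and the $A_i$-terms sum to $\widehat A_{[j]}^\Lambda(t - L \cdot \mathbf n + L_j)$. This reproduces precisely the defining expression \eqref{EqDefiTheta} of $\Theta_{[\mathbf n], t}^{L, \Lambda, A}$, completing the proof.

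The bookkeeping is mechanical once the well-definedness on equivalence classes is in place; the only point deserving real care — and the step I expect to be the main obstacle — is verifying that the two regroupings (first by $\approx$ on the vector index, then by $\sim$ on the scalar index $j$) are genuine bijective reindexings and that the interval constraint $-L_j \leq t - L \cdot \mathbf n' < 0$ is transferred losslessly at each stage, so that no terms are created or dropped.
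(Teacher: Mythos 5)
Your proposal is correct and follows essentially the same route as the paper: the paper's proof also derives \eqref{ExplicitUAdpBis} from \eqref{ExplicitU} by the bijective reindexing $(\mathbf n, j) \mapsto ([\mathbf n], \mathbf n, [j], j)$ between $\{(\mathbf n, j) \;|\: -L_j \leq t - L \cdot \mathbf n < 0\}$ and the class-indexed set with constraints $t < L \cdot \mathbf n \leq t + L_{\max}$ and $L \cdot \mathbf n - L_j \leq t$, which is exactly your two-stage regrouping by $\approx$ and $\sim$. Your write-up merely makes explicit the bookkeeping (constancy of $L \cdot \mathbf n$ and $L_j$ on classes, the shift $\mathbf m = \mathbf n' - e_j$ absorbed by the convention $\Xi_{\mathbf m, t} = 0$ for $\mathbf m \notin \mathbb N^N$) that the paper leaves to the reader.
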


\begin{proof}
Equation \eqref{ExplicitUAdpBis} follows immediately from \eqref{ExplicitU} and from the fact that the function $\varphi: \mathbb N^N \times \llbracket 1, N\rrbracket \to \mathcal Z \times \mathbb N^N \times \mathcal J \times \llbracket 1, N\rrbracket$ given by $\varphi(\mathbf n, j) = ([\mathbf n], \mathbf n, [j], j)$ is a bijective map from $\{(\mathbf n, j) \in \mathbb N^N \times \llbracket 1, N\rrbracket \;|\: -L_j \leq t - L \cdot \mathbf n < 0\}$ to $\{([\mathbf m], \mathbf n, [i], j) \in \mathcal Z \times \mathbb N^N \times \mathcal J \times \llbracket 1, N\rrbracket \;|\: \mathbf n \in [\mathbf m],\; j \in [i],\; t < L \cdot \mathbf n \leq t + L_{\max},\; L \cdot \mathbf n - L_j \leq t\}$ for every $t \in \mathbb R$.
\end{proof}

\subsection{Asymptotic behavior of solutions in terms of the coefficients}
\label{SecStability}

Let us fix a matrix norm $\abs{\cdot}$ on $\mathcal M_d(\mathbb C)$ in the whole section. Let $C_1, C_2 > 0$ be such that
\begin{equation}
\label{EquivNormAst}
C_1 \abs{A}_{p} \leq \abs{A} \leq C_2 \abs{A}_{p}, \qquad \forall A \in \mathcal M_{d}(\mathbb C),\; \forall p \in [1, +\infty].
\end{equation}

Let $\mathcal A$ be a uniformly locally bounded subset of $L^\infty_{\mathrm{loc}}(\mathbb R, \mathcal M_d(\mathbb C)^N)$. The family of all systems $\Sigma_\delta(L, A)$ for $A \in \mathcal A$ is denoted by $\Sigma_\delta(L, \mathcal A)$. We wish to characterize the asymptotic behavior of $\Sigma_\delta(L, \mathcal A)$ (i.e., uniformly with respect to $A \in \mathcal A$) in terms of the behavior of the coefficients $\widehat\Xi_{[\mathbf n], t}$ and $\Theta_{[\mathbf n], t}$. For that purpose, we introduce the following definitions.

\begin{definition}
Let $L \in (\mathbb R_+^\ast)^N$.

\begin{enumerate}[label={\bf \roman*.}, ref={(\roman*)}]
\item For $p \in [1, +\infty]$, we say that $\Sigma_\delta(L, \mathcal A)$ is of \emph{exponential type} $\gamma \in \mathbb R$ in $\mathsf X_p^\delta$ if, for every $\varepsilon > 0$, there exists $K > 0$ such that, for every $A \in \mathcal A$ and $u_0 \in \mathsf X_p^\delta$, the corresponding solution $u$ of $\Sigma_\delta(L, A)$ satisfies, for every $t \geq 0$,
\[
\norm{u_t}_{p} \leq K e^{(\gamma + \varepsilon) t} \norm{u_0}_{p}.
\]
We say that $\Sigma_\delta(L, \mathcal A)$ is \emph{exponentially stable} in $\mathsf X_p^\delta$ if it is of negative exponential type.

\item Let $\Lambda \in (\mathbb R_+^\ast)^N$ be such that $L \in V_+(\Lambda)$. We say that $\Sigma_\delta(L, \mathcal A)$ is of \emph{$(\Theta, \Lambda)$-exponential type} $\gamma \in \mathbb R$ if, for every $\varepsilon > 0$, there exists $K > 0$ such that, for every $A \in \mathcal A$, $\mathbf n \in \mathbb N^N$, and almost every $t \in \left(L \cdot \mathbf n - L_{\max}, L \cdot \mathbf n\right)$, we have
\[
\abs{\Theta_{[\mathbf n], t}^{L, \Lambda, A}} \leq K e^{(\gamma + \varepsilon) t}.
\]

\item Let $\Lambda \in (\mathbb R_+^\ast)^N$ be such that $L \in V_+(\Lambda)$. We say that $\Sigma_\delta(L, \mathcal A)$ is of \emph{$(\widehat\Xi, \Lambda)$-exponential type} $\gamma \in \mathbb R$ if, for every $\varepsilon > 0$, there exists $K > 0$ such that, for every $A \in \mathcal A$, $\mathbf n \in \mathbb N^N$, and almost every $t \in \mathbb R$, we have
\[
\abs{\widehat\Xi_{[\mathbf n], t}^{L, \Lambda, A}} \leq K e^{(\gamma + \varepsilon) L \cdot \mathbf n}.
\]

\item For $p \in [1, +\infty]$, the \emph{maximal Lyapunov exponent of $\Sigma_\delta(L, \mathcal A)$} in $\mathsf X_p^\delta$ is defined as
\[
\lambda_p(L, \mathcal A) = \limsup_{t \to +\infty} \sup_{A \in \mathcal A} \sup_{\substack{u_0 \in \mathsf X_p^\delta\\ \norm{u_0}_{p} = 1}} \frac{\ln \norm{u_t}_{p}}{t},
\]
where $u$ denotes the solution of $\Sigma_\delta(L, A)$ with initial condition $u_0$.
\end{enumerate}
\end{definition}

\begin{remark}
\label{RemkTransformation}
Let $L \in (\mathbb R_+^\ast)^N$ and $\mu \in \mathbb R$. For every $A: \mathbb R \to \mathcal M_d(\mathbb C)^N$ and $u$ solution of $\Sigma_\delta(L, A)$, it follows from \eqref{SystDelay} that $t \mapsto e^{\mu t} u(t)$ is a solution of the system $\Sigma_\delta(L, (e^{\mu L_1} A_1, \dotsc, e^{\mu L_N} A_N))$. As a consequence, if $\mathcal A \subset L^\infty_{\mathrm{loc}}(\mathbb R, \mathcal M_d(\mathbb C)^N)$ and
\[\mathcal A_\mu = \{(e^{\mu L_1} A_1, \dotsc, e^{\mu L_N} A_N) \;|\: A = (A_1, \dotsc, A_N) \in \mathcal A\},\]
one has $\lambda_p(L, \mathcal A_\mu) = \lambda_p(L, \mathcal A) + \mu$.
\end{remark}

The link between exponential type and maximal Lyapunov exponent of $\Sigma_\delta(L, \mathcal A)$ is provided by the following proposition.

\begin{proposition}
\label{PropLyapExpo}
Let $L \in (\mathbb R_+^\ast)^N$, $\mathcal A$ be uniformly locally bounded, and $p \in [1, +\infty]$. Then
\[\lambda_p(L, \mathcal A) = \inf\{\gamma \in \mathbb R \;|\: \Sigma_\delta(L, \mathcal A) \text{ is of exponential type }\gamma\text{ in }\mathsf X_p^\delta\}.\]
In particular, $\Sigma_\delta(L, \mathcal A)$ is exponentially stable if and only if $\lambda_p(L, \mathcal A) < 0$.
\end{proposition}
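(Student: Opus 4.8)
The plan is to prove the two inequalities $\lambda_p(L,\mathcal A) \le \gamma_\ast$ and $\gamma_\ast \le \lambda_p(L,\mathcal A)$, where $\gamma_\ast$ denotes the infimum on the right-hand side. It is convenient to write, for $t \ge 0$, $\Phi(t) = \sup_{A \in \mathcal A}\,\sup_{\norm{u_0}_{p} = 1}\norm{u_t}_{p}$ for the norm of the time-$t$ propagator, so that $\lambda_p(L,\mathcal A) = \limsup_{t\to+\infty}\frac{\ln \Phi(t)}{t}$ and, by linearity of the flow in $u_0$, every solution obeys $\norm{u_t}_{p} \le \Phi(t)\norm{u_0}_{p}$. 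The inequality $\lambda_p(L,\mathcal A) \le \gamma_\ast$ is then immediate: if $\Sigma_\delta(L,\mathcal A)$ is of exponential type $\gamma$, then for each $\varepsilon > 0$ there is $K$ with $\Phi(t) \le K e^{(\gamma+\varepsilon)t}$, hence $\frac{\ln\Phi(t)}{t} \le \frac{\ln K}{t} + \gamma + \varepsilon$, and letting $t \to +\infty$ and then $\varepsilon \to 0$ gives $\lambda_p(L,\mathcal A) \le \gamma$; taking the infimum over admissible $\gamma$ yields the claim.

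For the reverse inequality I would fix $\gamma > \lambda_p(L,\mathcal A)$ and show that $\Sigma_\delta(L,\mathcal A)$ is of exponential type $\gamma$, which forces $\gamma_\ast \le \gamma$ and hence $\gamma_\ast \le \lambda_p(L,\mathcal A)$. Choosing $\gamma'$ with $\lambda_p(L,\mathcal A) < \gamma' < \gamma$, the definition of the $\limsup$ provides $T_0 > 0$ such that $\Phi(t) \le e^{\gamma' t}$ for all $t \ge T_0$, whence $\norm{u_t}_{p} \le e^{(\gamma+\varepsilon)t}\norm{u_0}_{p}$ for $t \ge T_0$ and any $\varepsilon > 0$. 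What remains is to control the solution on the compact interval $[0,T_0]$, uniformly in $A \in \mathcal A$.

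The crux, and the step I expect to be the main obstacle, is this finite-time bound: there is $C(T_0) > 0$, depending only on $T_0$, $L$ and the uniform local bound on $\mathcal A$, with $\norm{u_t}_{p} \le C(T_0)\norm{u_0}_{p}$ for all $t \in [0,T_0]$, $A \in \mathcal A$ and $u_0 \in \mathsf X_p^\delta$. I would derive it from the representation formula \eqref{ExplicitUAdpBis}: for $t \in [0,T_0]$ only the classes $[\mathbf n] \in \mathcal Z$ with $0 < L\cdot\mathbf n \le T_0 + L_{\max}$ contribute, and their number is bounded by a constant depending only on $T_0$ and $L$. Since $\mathcal A$ is uniformly locally bounded and each $\Theta_{[\mathbf n],t}^{L,\Lambda,A}$ is a finite sum of products of factors of the form $A_{\cdot}(\cdot)$ evaluated at times ranging in a fixed compact set, one obtains $\abs{\Theta_{[\mathbf n],t}^{L,\Lambda,A}} \le M(T_0)$ uniformly in $A$ and in $t$ over the relevant range. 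Applying Minkowski's inequality on the window $[t-L_{\max},t]$, and using that in \eqref{ExplicitUAdpBis} the argument $t - L\cdot\mathbf n$ always lies in the domain $[-L_{\max},0)$ of $u_0$ together with the fact that translation and restriction do not increase the $L^p$-norm, each of the finitely many terms is bounded by $M(T_0)\norm{u_0}_{p}$, which yields the desired $C(T_0)$.

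Granting the finite-time bound, the two regimes combine: with $K = \max\bigl(1,\, C(T_0)\,e^{\abs{\gamma+\varepsilon}T_0}\bigr)$ one gets $\norm{u_t}_{p} \le K e^{(\gamma+\varepsilon)t}\norm{u_0}_{p}$ for every $t \ge 0$, so $\Sigma_\delta(L,\mathcal A)$ is of exponential type $\gamma$, finishing the reverse inequality and hence the equality $\lambda_p(L,\mathcal A) = \gamma_\ast$. Finally, the ``in particular'' assertion follows from the reverse inequality, which shows that every $\gamma > \lambda_p(L,\mathcal A)$ is an exponential type: if $\lambda_p(L,\mathcal A) < 0$ one may select $\gamma \in (\lambda_p(L,\mathcal A),0)$, a negative exponential type, so $\Sigma_\delta(L,\mathcal A)$ is exponentially stable; conversely, exponential stability provides a negative exponential type $\gamma$, whence $\lambda_p(L,\mathcal A) = \gamma_\ast \le \gamma < 0$.
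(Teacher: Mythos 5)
Your proof is correct and follows essentially the same route as the paper: the easy inequality from the definition, then a uniform finite-time bound on $[0,T_0]$ obtained from the representation formula together with the uniform local boundedness of $\mathcal A$, spliced with the $\limsup$ bound for large times. The only cosmetic difference is that you invoke the aggregated representation \eqref{ExplicitUAdpBis} with the $\Theta$ coefficients (implicitly with $\Lambda = L$, which is legitimate since $L \in V_+(L)$), whereas the paper cites \eqref{ExplicitU} and \eqref{XiExplicit} directly; these are equivalent for this purpose.
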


\begin{proof}
Let $\gamma \in \mathbb R$ be such that $\Sigma_\delta(L, \mathcal A)$ is of exponential type $\gamma$ in $\mathsf X_p^\delta$. It is clear from the definition that $\lambda_p(L, \mathcal A) \leq \gamma$. We are left to prove that $\Sigma_\delta(L, \mathcal A)$ is of exponential type $\lambda_p(L, \mathcal A)$ when the latter is finite. Let $\varepsilon > 0$. From the definition of $\lambda_p(L, \mathcal A)$, there exists $t_0 > 0$ such that, for every $t \geq t_0$, $A \in \mathcal A$, and $u_0 \in \mathsf X_p^\delta$, one has
\[
\norm{u_t}_{p} \leq e^{(\lambda_p(L, \mathcal A) + \varepsilon) t} \norm{u_0}_{p}.
\]
Since $\mathcal A$ is uniformly locally bounded, by using \eqref{ExplicitU} and \eqref{XiExplicit}, one deduces that there exists $K > 0$ such that, for every $t \in [0, t_0]$, $A \in \mathcal A$, and $u_0 \in \mathsf X_p^\delta$, one has $\norm{u_t}_{p} \leq K \norm{u_0}_{p}$. Hence the conclusion.
\end{proof}

\begin{remark}
\label{RemkLimsupThetaXi}
Similarly, one proves that, for $\Lambda \in (\mathbb R_+^\ast)^N$ and $L \in V_+(\Lambda)$,
\begin{multline*}
\limsup_{L \cdot \mathbf n \to +\infty} \sup_{A \in \mathcal A} \esssup_{t \in \left(L \cdot \mathbf n - L_{\max}, L \cdot \mathbf n\right)} \frac{\ln \abs{\Theta_{[\mathbf n], t}^{L, \Lambda, A}}}{t} \\
= \inf\{\gamma \in \mathbb R \;|\: \Sigma_\delta(L, \mathcal A) \text{ is of $(\Theta, \Lambda)$-exponential type }\gamma\}
\end{multline*}
and
\begin{multline*}
\limsup_{L \cdot \mathbf n \to +\infty} \sup_{A \in \mathcal A} \esssup_{t \in \mathbb R} \frac{\ln \abs{\widehat\Xi_{[\mathbf n], t}^{L, \Lambda, A}}}{L \cdot \mathbf n} \\
 = \inf\{\gamma \in \mathbb R \;|\: \Sigma_\delta(L, \mathcal A) \text{ is of $(\widehat\Xi, \Lambda)$-exponential type }\gamma\}.
\end{multline*}
\end{remark}

\subsubsection{General case}

The following result, which is a generalization of \cite[Proposition 4.1]{Chitour2015Persistently}, uses the representation formula \eqref{ExplicitUAdpBis} for the solutions of $\Sigma_{\delta}(L, A)$ in order to provide upper bounds on their growth.

\begin{proposition}
\label{PropConvCoeffConvSolGeneral}
Let $L \in V_+(\Lambda)$. Suppose that there exists a continuous function $f: \mathbb R \to \mathbb R_+^\ast$ such that, for every $A \in \mathcal A$, $\mathbf n \in \mathbb N^N$, and almost every $t \in \left(L \cdot \mathbf n - L_{\max}, L \cdot \mathbf n\right)$, one has
\begin{equation}
\label{HypoConvCoeff}
\abs{\Theta_{[\mathbf n], t}^{L, \Lambda, A}} \leq f(t).
\end{equation}
Then there exists $C > 0$ such that, for every $A \in \mathcal A$, $p \in \left[1, +\infty\right]$, and $u_0 \in \mathsf X_p^\delta$, the corresponding solution $u$ of $\Sigma_{\delta}(L, A)$ satisfies, for every $t \geq 0$,
\begin{equation}
\norm{u_t}_{p} \leq C (t + 1)^{N-1} \max_{s \in [t - L_{\max}, t]} f(s) \norm{u_0}_{p}.
\label{SolConvExpo}
\end{equation}
\end{proposition}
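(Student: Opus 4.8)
The plan is to start from the representation formula \eqref{ExplicitUAdpBis} and estimate $\norm{u_t}_p$ directly by the triangle inequality, pushing the pointwise bound \eqref{HypoConvCoeff} on $\Theta$ through the sum and the norm. The essential features to exploit are that the sum in \eqref{ExplicitUAdpBis} runs over $[\mathbf n] \in \mathcal Z$ with $t < L \cdot \mathbf n \leq t + L_{\max}$, so there are finitely many terms, and that the arguments $t - L \cdot \mathbf n$ of $u_0$ land in $[-L_{\max}, 0)$, which is exactly the domain of $u_0$. The main quantitative issue, and what I expect to be the principal obstacle, is to control the \emph{number} of equivalence classes $[\mathbf n]$ contributing at each time $t$ and to show that this count grows at most polynomially like $(t+1)^{N-1}$; the product-type factor $f$ is then simply pulled out via $\max_{s \in [t - L_{\max}, t]} f(s)$.

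First I would treat the case $p = \infty$ by taking the $\ell^\infty$-norm in $s \in [-L_{\max}, 0)$ of \eqref{ExplicitUAdpBis} evaluated at $s + t$: for each fixed $s$, $u_t(s) = u(s+t) = \sum_{[\mathbf n]} \Theta_{[\mathbf n], s+t}^{L, \Lambda, A} u_0(s + t - L \cdot \mathbf n)$, where the sum is over $[\mathbf n]$ with $s + t < L \cdot \mathbf n \leq s + t + L_{\max}$ and the relevant $t$ in \eqref{HypoConvCoeff} is $s+t \in (L \cdot \mathbf n - L_{\max}, L \cdot \mathbf n)$, matching the hypothesis. Bounding $\abs{u_0(s+t - L\cdot\mathbf n)} \leq \norm{u_0}_\infty$ and $\abs{\Theta_{[\mathbf n], s+t}} \leq f(s+t) \leq \max_{r \in [t - L_{\max}, t]} f(r)$ (since $s + t \in [t - L_{\max}, t]$), the estimate reduces to counting the admissible classes $[\mathbf n]$. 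For general $p \in [1, +\infty)$ I would raise \eqref{ExplicitUAdpBis} to the $p$-th power, apply the discrete Hölder (or power-mean) inequality to turn $\bigl|\sum_{[\mathbf n]}\bigr|^p$ into at most $(\#\text{terms})^{p-1} \sum_{[\mathbf n]} |\cdot|^p$, integrate over $s \in [-L_{\max}, 0)$, and use a change of variables $r = s + t - L \cdot \mathbf n$ to recognize each integral as $\int |u_0(r)|^p\,dr \leq \norm{u_0}_p^p$; summing again produces another factor equal to the number of classes.

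For the counting step, I would argue that the classes $[\mathbf n] \in \mathcal Z$ meeting $\mathbb N^N$ with $L \cdot \mathbf n \le t + L_{\max}$ are parametrized by distinct values of $L \cdot \mathbf n$, and that the number of \emph{distinct} values $L \cdot \mathbf n$ with $\mathbf n \in \mathbb N^N$ lying in a fixed interval is bounded by the number of lattice points of $\mathbb N^N$ in the slab $\{L \cdot \mathbf n \le t + L_{\max}\}$ projected appropriately; a standard volume/lattice estimate gives a bound of order $(t+1)^{N-1}$ for the count in the thin strip $t < L \cdot \mathbf n \le t + L_{\max}$ of bounded width. Combining the norm estimate with this polynomial count, and folding the factors $(\#\text{terms})^{1/p'}$ and $(\#\text{terms})^{1/p}$ into a single power $(\#\text{terms})$, yields \eqref{SolConvExpo} with a constant $C$ depending only on $N$, $L$ and the norm constants $C_1, C_2$ from \eqref{EquivNormAst}, uniformly in $A \in \mathcal A$, $p$, and $u_0$. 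The hypothesis that $f$ is continuous and strictly positive guarantees $\max_{s \in [t - L_{\max}, t]} f(s)$ is finite and well-defined for every $t \geq 0$.
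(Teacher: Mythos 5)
Your proposal follows essentially the same route as the paper's proof: the representation formula \eqref{ExplicitUAdpBis}, the power-mean/H\"older inequality producing the factor $(\#\text{terms})^{p-1}$, a change of variables to recognize $\norm{u_0}_p^p$, and a lattice-point/volume estimate giving a count of order $(t+1)^{N-1}$ for the classes in the strip $t < L \cdot \mathbf n \leq t + L_{\max}$; even your folding of $(\#\text{terms})^{(p-1)/p}\cdot(\#\text{terms})^{1/p}$ into a single power is exactly the paper's bookkeeping.

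Two points in your write-up need repair, however. First, your starting identity $u_t(s) = \sum_{[\mathbf n]} \Theta_{[\mathbf n], s+t}\, u_0(s+t-L\cdot\mathbf n)$ for \emph{every} $s \in [-L_{\max}, 0)$ is only valid when $s+t \geq 0$: Proposition \ref{PropSolExpliciteAdpBis} gives the formula for nonnegative times only, and indeed for $s+t<0$ the right-hand side vanishes (by Remark \ref{RemkLambdaPrime}, $\Theta_{[\mathbf n],\tau} \neq 0$ forces $\tau \geq 0$) while the left-hand side equals $u_0(s+t)$. So your argument as written establishes \eqref{SolConvExpo} only for $t \geq L_{\max}$; for $0 \leq t < L_{\max}$ the portion of $u_t$ coming directly from the initial condition must be bounded separately, and one then uses that $f$ is continuous and positive (hence $\max_{s \in [t-L_{\max},t]} f(s)$ is bounded below for $t$ in the compact set $[0,L_{\max}]$) to absorb the resulting $\norm{u_0}_p$ into the constant $C$ --- this is the step the paper dismisses with ``one can easily show.'' Second, your counting step asserts that the contributing classes $[\mathbf n] \in \mathcal Z$ are parametrized by distinct values of $L \cdot \mathbf n$. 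That is false in general: $\mathcal Z$ is defined through $\Lambda$ (by $\Lambda \cdot \mathbf n = \Lambda \cdot \mathbf n'$), and when $Z(L) \supsetneq Z(\Lambda)$ two distinct classes can share the same value of $L \cdot \mathbf n$. What is true --- and is all you need, and is what the paper uses --- is the injection argument: every contributing class meets $\mathbb N^N$ (Remark \ref{RemkLambdaPrime}), distinct classes are disjoint, and $L \cdot {}$ is constant on each class, so the number of contributing classes is at most the number of points of $\mathbb N^N$ in the strip, to which the volume estimate with unit hypercubes applies. Both repairs are routine; with them your proof coincides with the paper's.
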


\begin{proof}
Let $A \in \mathcal A$, $p \in \left[1, +\infty\right)$, $u_0 \in \mathsf X_p^\delta$, and $u$ be the solution of $\Sigma_{\delta}(L, A)$ with initial condition $u_0$. For $t \in \mathbb R_+$, we write $\mathcal Y_{t} = \{[\mathbf n] \in \mathcal Z \;|\: t < L \cdot \mathbf n \leq t + L_{\max},\; [\mathbf n] \cap \mathbb N^N \not = \emptyset\}$ and $Y_t = \# \mathcal Y_t$. Thanks to Proposition \ref{PropSolExpliciteAdpBis}, Remark \ref{RemkLambdaPrime}, and \eqref{HypoConvCoeff}, we have, for $t \geq L_{\max}$,
\[
\begin{split}
\norm{u_t}_{p}^p & = \int_{t - L_{\max}}^t \abs{\sum_{[\mathbf n] \in \mathcal Y_s} \Theta_{[\mathbf n], s} u_0(s - L \cdot \mathbf n)}_{p}^p ds \\
 & \leq \int_{t - L_{\max}}^t Y_{s}^{p-1} \sum_{[\mathbf n] \in \mathcal Y_s} \abs{\Theta_{[\mathbf n], s} u_0(s - L \cdot \mathbf n)}_{p}^p ds \\
 & \leq C_1^{-p} \int_{t - L_{\max}}^t Y_{s}^{p-1} f(s)^p \sum_{[\mathbf n] \in \mathcal Y_s} \abs{u_0(s - L \cdot \mathbf n)}_{p}^p ds \\
 & \leq C_1^{-p} \max_{s \in [t - L_{\max}, t]} f(s)^p \int_{t - L_{\max}}^t Y_{s}^{p-1} \sum_{[\mathbf n] \in \mathcal Y_s} \abs{u_0(s - L \cdot \mathbf n)}_{p}^p ds.
\end{split}
\]

We clearly have $Y_t \leq \# \{\mathbf n \in \mathbb N^N \;|\: t < L \cdot \mathbf n \leq t + L_{\max}\}$. For $\mathbf n \in \mathbb N^N$, we denote $\mathfrak C_{\mathbf n} = \{x \in \mathbb R^N \;|\: n_i < x_i < n_i + 1 \text{ for every } i \in \llbracket 1, N\rrbracket\}$. This defines a family of pairwise disjoint open hypercubes of unit volume. Thus
\begin{equation*}
\begin{split}
Y_t & \leq \sum_{\substack{\mathbf n \in \mathbb N^N \\ t < L \cdot \mathbf n \leq t + L_{\max}}} \Vol \mathfrak C_{\mathbf n} = \Vol\left(\bigcup_{\substack{\mathbf n \in \mathbb N^N \\ t < L \cdot \mathbf n \leq t + L_{\max}}} \mathfrak C_{\mathbf n}\right) \\
 & \leq \Vol \{x \in (\mathbb R_+)^N \;|\: t < L \cdot x < t + \abs{L}_1 + L_{\max}\}.
\end{split}
\end{equation*}
Then there exists $C_3 > 0$ only depending on $L$ and $N$ such that $Y_t \leq C_3 (t + 1)^{N - 1}$. Thus,
\begin{align*}
\norm{u_t}_{p}^p & \leq C_1^{-p} C_3^{p-1} (t + 1)^{(N-1)(p-1)} \\
 & \hphantom{\leq} {} \cdot \max_{s \in [t - L_{\max}, t]} f(s)^p \int_{t - L_{\max}}^t \sum_{[\mathbf n] \in \mathcal Y_s} \abs{u_0(s - L \cdot \mathbf n)}_{p}^p ds \displaybreak[0] \\
 & = C_1^{-p} C_3^{p-1} (t + 1)^{(N-1)(p-1)} \\
 & \hphantom{=} {} \cdot \max_{s \in [t - L_{\max}, t]} f(s)^p \int_{-L_{\max}}^0 \sum_{[\mathbf n] \in \mathcal Y_{t - L_{\max} - s}} \abs{u_0(s)}_{p}^p ds.
\end{align*}
Similarly, there exists $C_4 > 0$ only depending on $L$ and $N$ such that, for every $t \in \mathbb R_+$ and $s \in [-L_{\max}, 0]$, $Y_{t - L_{\max} - s} \leq C_4 (t + 1)^{N-1}$, yielding \eqref{SolConvExpo} for $t \geq L_{\max}$. One can easily show that, for $0 \leq t \leq L_{\max}$, we have $\norm{u_t}_{p} \leq C^\prime \norm{u_0}_{p}$ for some constant $C^\prime$ independent of $p$ and $u_0$, and so \eqref{SolConvExpo} holds for every $t \geq 0$. The case $p = +\infty$ is treated by similar arguments.
\end{proof}

When $L \in W_+(\Lambda)$, we also have the following lower bound for solutions of $\Sigma_\delta(L, \mathcal A)$.

\begin{proposition}
\label{PropNConvCoeffNConvSolGeneral}
Let $L \in W_+(\Lambda)$ and $f: \mathbb R \to \mathbb R_+^\ast$ be a continuous function. Suppose that there exist $A \in \mathcal A$, $\mathbf n_0 \in \mathbb N^N$, and a set of positive measure $S \subset \left(L \cdot \mathbf n_0 - L_{\max}, L \cdot \mathbf n_0\right)$ such that, for every $s \in S$,
\begin{equation}
\label{HypoNConvCoeff}
\abs{\Theta_{[\mathbf n_0], s}^{L, \Lambda, A}} > f(s).
\end{equation}
Then there exist a constant $C > 0$ independent of $f$, an initial condition $u_0 \in L^\infty([-L_{\max}, 0], \mathbb C^d)$, and $t > 0$, such that, for every $p \in [1, +\infty]$, the solution $u$ of $\Sigma_{\delta}(L, A)$ with initial condition $u_0$ satisfies
\begin{equation*}
\norm{u_t}_{p} > C \min_{s \in [t - L_{\max}, t]} f(s) \norm{u_0}_{p}.
\end{equation*}
\end{proposition}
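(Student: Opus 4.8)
The plan is to use the representation formula \eqref{ExplicitUAdpBis} at the final time $t = L \cdot \mathbf n_0$ together with an initial condition supported on a small, carefully chosen subset of $S$, designed so that on that subset the only term surviving in \eqref{ExplicitUAdpBis} is the one carrying $\Theta_{[\mathbf n_0], \cdot}^{L, \Lambda, A}$, whose size is controlled from below by \eqref{HypoNConvCoeff}.

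First I would reduce the matrix lower bound \eqref{HypoNConvCoeff} to a single column evaluated on a fixed canonical vector, in a way that is uniform in $p$. Using \eqref{EquivNormAst} and the elementary inequalities $\abs{M}_\infty \le d \max_{i} \abs{M e_i}_\infty$ and $\abs{v}_p \ge \abs{v}_\infty$ (valid for every $p \in [1, +\infty]$, every $M \in \mathcal M_d(\mathbb C)$, and every $v \in \mathbb C^d$), for each $s \in S$ there is an index $i \in \llbracket 1, d\rrbracket$ with $\abs{\Theta_{[\mathbf n_0], s}^{L,\Lambda,A} e_i}_\infty > \frac{1}{d C_2} f(s)$. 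Since there are finitely many indices, a pigeonhole argument on the measurable sets $\{s \in S \;|\: \abs{\Theta_{[\mathbf n_0], s}^{L,\Lambda,A} e_i}_\infty > \frac{1}{d C_2} f(s)\}$ produces a single $i^\ast \in \llbracket 1, d\rrbracket$ and a subset $S_1 \subseteq S$ of positive measure on which $\abs{\Theta_{[\mathbf n_0], s}^{L,\Lambda,A} e_{i^\ast}}_p \ge \abs{\Theta_{[\mathbf n_0], s}^{L,\Lambda,A} e_{i^\ast}}_\infty > \frac{1}{d C_2} f(s)$ for all $s \in S_1$ and all $p$. Choosing $e_{i^\ast}$ as the value of the initial condition then yields a good term whose $\ell^p$-size is controlled from below uniformly in $p$, while $\abs{e_{i^\ast}}_p = 1$ makes the norm of $u_0$ independent of $p$.

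Next I would separate frequencies. Because $L \in W_+(\Lambda)$, one has $L \cdot \mathbf n = L \cdot \mathbf n_0$ if and only if $[\mathbf n] = [\mathbf n_0]$, so distinct classes appearing in \eqref{ExplicitUAdpBis} carry distinct values of $L \cdot \mathbf n$. As the components of $L$ are positive, only finitely many classes $[\mathbf n]$ satisfy $L \cdot \mathbf n \in (L \cdot \mathbf n_0 - L_{\max}, L \cdot \mathbf n_0 + L_{\max})$; let $d_1, \dotsc, d_r$ be the corresponding nonzero differences $L \cdot \mathbf n - L \cdot \mathbf n_0$ and set $\delta_0 = \min_k \abs{d_k} > 0$. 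Covering $S_1$ by intervals of length $< \delta_0$, at least one meets $S_1$ in positive measure; I take $S_0 \subseteq S_1$ to be such an intersection and set $u_0 = e_{i^\ast}\, \mathbf 1_{S_0 - L \cdot \mathbf n_0}$, which belongs to $L^\infty([-L_{\max}, 0], \mathbb C^d)$ since $S_0 - L \cdot \mathbf n_0 \subseteq (-L_{\max}, 0)$. Evaluating at $t = L \cdot \mathbf n_0 > 0$, for $\sigma \in S_0$ the class $[\mathbf n_0]$ lies in the summation range of \eqref{ExplicitUAdpBis}, while any other class $[\mathbf n]$ contributes only if $u_0(\sigma - L \cdot \mathbf n) \ne 0$, i.e. if $\sigma - d_k \in S_0$ for some $k$; this is impossible because $S_0$ is contained in an interval of length $< \delta_0 \le \abs{d_k}$. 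Hence $u(\sigma) = \Theta_{[\mathbf n_0], \sigma}^{L,\Lambda,A} e_{i^\ast}$ for $\sigma \in S_0$, so that $\abs{u(\sigma)}_p > \frac{1}{d C_2} f(\sigma) \ge \frac{1}{d C_2}\min_{s \in [t - L_{\max}, t]} f(s)$ for every $p$. Bounding $\norm{u_t}_p^p \ge \int_{S_0} \abs{u(\sigma)}_p^p\, d\sigma$ from below (and the essential supremum over $S_0$ when $p = +\infty$) against $\norm{u_0}_p^p = \abs{S_0}$ then gives the claim with $C = 1/(2 d C_2)$, which is independent of $f$ and of $p$.

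The main obstacle is to make a single initial condition work for all $p$ simultaneously and to rule out destructive interference from the remaining terms in \eqref{ExplicitUAdpBis}, whose matrix coefficients are not under our control. The first difficulty is handled by the canonical-vector choice above, which renders both $\norm{u_0}_p$ and the lower bound on the good term essentially $p$-independent. The second is precisely where the hypothesis $L \in W_+(\Lambda)$ (rather than merely $L \in V_+(\Lambda)$) is essential: it guarantees $\delta_0 > 0$, and hence that all competing terms can be switched off by localizing the support of $u_0$ within an interval shorter than the smallest frequency gap. Were $L$ only in $V_+(\Lambda)$, a distinct class could share the value $L \cdot \mathbf n_0$ and would contribute on $S_0$ itself, with no way to control possible cancellations.
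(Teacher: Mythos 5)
Your strategy is essentially the paper's own: pigeonhole the matrix bound down to a single column $e_{i^\ast}$ on a positive-measure subset, localize the support of the initial condition below the minimal gap between admissible values of $L\cdot\mathbf n$, and use \eqref{ExplicitUAdpBis} so that only the $[\mathbf n_0]$-term survives; the cosmetic differences (indicator initial datum instead of a general bump $\mu$, evaluation at $t = L\cdot\mathbf n_0$ instead of $t_0+\delta$, interval covering instead of an accumulation point of $S$) are harmless, and your final estimate, uniform in $p$, is correct.

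There is, however, one step that fails as written. You claim that ``only finitely many classes $[\mathbf n]$ satisfy $L \cdot \mathbf n \in (L\cdot\mathbf n_0 - L_{\max},\, L\cdot\mathbf n_0 + L_{\max})$'' and define $\delta_0$ as the minimum of the nonzero gaps. Over all of $\mathcal Z = \mathbb Z^N/\approx$ this is false in general: $\{L\cdot\mathbf n \;|\: \mathbf n \in \mathbb Z^N\}$ is an additive subgroup of $\mathbb R$, dense whenever the $L_i$ are not all commensurable (e.g.\ $L=(1,\sqrt2)$), so infinitely many classes fall in the window and your $\delta_0$ would be $0$; positivity of the components of $L$ gives local finiteness only for $\mathbf n \in \mathbb N^N$. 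The statement you need is the restricted one: among classes with $[\mathbf n]\cap\mathbb N^N \neq \emptyset$ there are finitely many in the window, and these are the only ones that can contribute, because $\Theta^{L,\Lambda,A}_{[\mathbf n],t} = 0$ for all other classes by Remark \ref{RemkLambdaPrime} (the paper's proof takes its minimum precisely over $\mathbf n \in \mathbb N^N$ with $L\cdot(\mathbf n - \mathbf n_0)\neq 0$ for this reason). The same remark fills a second small omission: to invoke \eqref{ExplicitUAdpBis} at times $\sigma \in S_0$ you need $\sigma \geq 0$, which is not assumed but follows since $\Theta^{L,\Lambda,A}_{[\mathbf n_0],\sigma} \neq 0$ on $S$ forces $S \subset \left[0,+\infty\right)$. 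With these two citations inserted, your proof is complete and coincides in substance with the paper's.
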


\begin{proof}
According to Remark \ref{RemkLambdaPrime}, one has $\Theta_{[\mathbf n], s}^{L, \Lambda, A} = \Theta_{[\mathbf n], s}^{L, L, A}$ for every $[\mathbf n] \in \mathcal Z$ and $s \in \mathbb R$, and therefore we assume for the rest of the argument that $\Lambda = L$ and we drop the upper index $L, L, A$.

For $s \in S$, one has $\abs{\Theta_{[\mathbf n_0], s}}_{\infty} > C_2^{-1} f(s)$, where $C_2$ is defined in \eqref{EquivNormAst}. Using \eqref{HypoNConvCoeff} and Remark \ref{RemkLambdaPrime}, one derives that $S \subset \left[0, +\infty\right)$.

For every $s \in S$, one has
\[C_2^{-1} f(s) < \abs{\Theta_{[\mathbf n_0], s}}_{\infty} \leq \sum_{j=1}^d \abs{\Theta_{[\mathbf n_0], s} e_j}_{\infty},\]
and thus there exist $j_0 \in \llbracket 1, d\rrbracket$ and a subset $\tilde S \subset S$ of positive measure such that, for every $s \in \tilde S$ and $p \in [1, +\infty]$, one has
\begin{equation}
\label{NConvCoeffLp}
C_2^{-1} d^{-1} f(s) < \abs{\Theta_{[\mathbf n_0], s} e_{j_0}}_{\infty} \leq \abs{\Theta_{[\mathbf n_0], s} e_{j_0}}_{p}.
\end{equation}
In order to simplify the notations in the sequel, we write $S$ instead of $\tilde S$.

Let $t_0 \in S \setminus \{0\}$ be such that, for every $\varepsilon > 0$, $(t_0 - \varepsilon, t_0 + \varepsilon) \cap S$ has positive measure. Let $\delta > 0$ be such that
\[
2 \delta < \min\left\{2 t_0, L \cdot \mathbf n_0 - t_0, t_0 - L \cdot \mathbf n_0 + L_{\max}, \min_{\substack{\mathbf n \in \mathbb N^N \\ L \cdot (\mathbf n - \mathbf n_0) \not = 0}} \abs{L \cdot (\mathbf n - \mathbf n_0)}\right\}.
\]
Such a choice is possible since $t_0 \in (L \cdot \mathbf n_0 - L_{\max}, L \cdot \mathbf n_0)$, $t_0 \in S \setminus \{0\} \subset \mathbb R_+^\ast$, and $\{L \cdot \mathbf n \;|\: \mathbf n \in \mathbb N^N\}$ is locally finite.

Let $S_1 = (S - t_0) \cap (-\delta, \delta)$, which is, by construction, of positive measure, and $\mu: \mathbb R \to \mathbb R$ be any non-zero bounded measurable function which is zero outside $S_1$. Define $u_0: \left[-L_{\max}, 0\right) \to \mathbb C^d$ by
\begin{equation*}
u_0(s) = \mu (s - t_0 + L \cdot \mathbf n_0) e_{j_0}
\end{equation*}
and let $u$ be the solution of $\Sigma_{\delta}(L, A)$ with initial condition $u_0$. For $s \in (-\delta, \delta)$, we have $t_0 + s > 0$ since $t_0 > \delta$. By Proposition \ref{PropSolExpliciteAdpBis}, one has
\begin{equation}
\label{uT0pS}
u(t_0 + s) = \sum_{\substack{[\mathbf n] \in \mathcal Z \\ t_0 + s < L \cdot \mathbf n \leq t_0 + s + L_{\max}}} \Theta_{[\mathbf n], t_0 + s} \mu (s + L \cdot (\mathbf n_0 - \mathbf n)) e_{j_0}.
\end{equation}
If $L \cdot \mathbf n \not = L \cdot \mathbf n_0$, we have $\abs{L \cdot (\mathbf n - \mathbf n_0)} > 2 \delta$, and so $\abs{s + L \cdot (\mathbf n_0 - \mathbf n)} > \delta$, which shows that $\mu(s + L \cdot (\mathbf n_0 - \mathbf n)) = 0$. Hence, Equation \eqref{uT0pS} reduces to $u(t_0 + s) = \Theta_{[\mathbf n_0], t_0 + s} \mu(s) e_{j_0}$. We finally obtain, using \eqref{NConvCoeffLp} and letting $t = t_0 + \delta$, that, for $p \in \left[1, +\infty\right)$,
\begin{equation}
\label{EstimNConv}
\begin{split}
\norm{u_{t}}_{p}^p & \geq \norm{u_{t_0}}_{L^p([-\delta, \delta], \mathbb C^d)}^p \geq \int_{S_1} \abs{u(t_0 + s)}_{p}^p ds = \int_{S_1} \abs{\Theta_{[\mathbf n_0], t_0 + s} e_{j_0}}_{p}^p \abs{\mu(s)}^p ds\\
 & > C_2^{-p} d^{-p} \int_{S_1} f(t_0 + s)^p \abs{\mu(s)}^p ds \geq C_2^{-p} d^{-p} \min_{s \in [t - L_{\max}, t]} f(s)^p \norm{u_0}_{p}^p. 
\end{split}
\end{equation}
A similar estimate holds in the case $p = +\infty$, which concludes the proof of the proposition.
\end{proof}

As a corollary of Propositions \ref{PropConvCoeffConvSolGeneral} and \ref{PropNConvCoeffNConvSolGeneral}, by taking $f$ of the type $f(t) = K e^{(\gamma + \varepsilon) t}$, one obtains the following theorem. The last equality follows from Proposition \ref{PropLyapExpo} and Remark \ref{RemkLimsupThetaXi}.

\begin{theorem}
\label{TheoP0EquivP1}
Let $\Lambda \in (\mathbb R_+^\ast)^N$ and $\mathcal A$ be uniformly locally bounded. For every $L \in V_+(\Lambda)$, if $\Sigma_\delta(L, \mathcal A)$ is of $(\Theta, \Lambda)$-exponential type $\gamma$ then, for every $p \in [1, +\infty]$, it is of exponential type $\gamma$ in $\mathsf X_p^\delta$. Conversely, for every $L \in W_+(\Lambda)$, if there exists $p \in [1, +\infty]$ such that $\Sigma_\delta(L, \mathcal A)$ is of exponential type $\gamma$ in $\mathsf X_p^\delta$, then it is of $(\Theta, \Lambda)$-exponential type $\gamma$. Finally, for every $L \in W_+(\Lambda)$ and $p \in [1, +\infty]$,
\begin{equation}
\label{LyapTheta}
\lambda_p(L, \mathcal A) = \limsup_{L \cdot \mathbf n \to +\infty} \sup_{A \in \mathcal A} \esssup_{t \in \left(L \cdot \mathbf n - L_{\max}, L \cdot \mathbf n\right)} \frac{\ln \abs{\Theta_{[\mathbf n], t}^{L, \Lambda, A}}}{t}.
\end{equation}
\end{theorem}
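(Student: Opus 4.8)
The plan is to derive Theorem~\ref{TheoP0EquivP1} as an essentially immediate consequence of the two growth estimates established in Propositions~\ref{PropConvCoeffConvSolGeneral} and~\ref{PropNConvCoeffNConvSolGeneral}, by specializing the abstract comparison function $f$ to an exponential. The key observation is that the notion of $(\Theta,\Lambda)$-exponential type $\gamma$ is, by definition, exactly the statement that for every $\varepsilon>0$ there is $K>0$ with $\abs{\Theta_{[\mathbf n],t}^{L,\Lambda,A}}\le K e^{(\gamma+\varepsilon)t}$ uniformly in $A\in\mathcal A$, $\mathbf n$, and a.e.\ $t$ in the relevant window. This is precisely a bound of the form \eqref{HypoConvCoeff} with $f(t)=K e^{(\gamma+\varepsilon)t}$.

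For the first implication I would fix $\varepsilon>0$, invoke the $(\Theta,\Lambda)$-exponential type hypothesis to obtain $K$, and set $f(t)=K e^{(\gamma+\varepsilon/2)t}$. Applying Proposition~\ref{PropConvCoeffConvSolGeneral} yields a constant $C$ such that every solution satisfies
\[
\norm{u_t}_{p}\le C(t+1)^{N-1}\max_{s\in[t-L_{\max},t]}f(s)\,\norm{u_0}_{p}.
\]
Since $\max_{s\in[t-L_{\max},t]}e^{(\gamma+\varepsilon/2)s}\le e^{(\gamma+\varepsilon/2)t}$ when $\gamma+\varepsilon/2\ge0$ (and is bounded by $e^{(\gamma+\varepsilon/2)(t-L_{\max})}$ times a constant otherwise), the polynomial prefactor $(t+1)^{N-1}$ is absorbed into the slack between $e^{(\gamma+\varepsilon/2)t}$ and $e^{(\gamma+\varepsilon)t}$: one chooses $K'$ large enough that $C(t+1)^{N-1}K e^{(\gamma+\varepsilon/2)t}\le K' e^{(\gamma+\varepsilon)t}$ for all $t\ge0$, which is possible because $(t+1)^{N-1}e^{-\varepsilon t/2}$ is bounded on $\mathbb R_+$. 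This delivers $\norm{u_t}_p\le K' e^{(\gamma+\varepsilon)t}\norm{u_0}_p$, i.e.\ exponential type $\gamma$ in $\mathsf X_p^\delta$, for every $p\in[1,+\infty]$.

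For the converse, fix $L\in W_+(\Lambda)$ and suppose $\Sigma_\delta(L,\mathcal A)$ is of exponential type $\gamma$ in $\mathsf X_p^\delta$ for some $p$. I would argue by contraposition: if $\Sigma_\delta(L,\mathcal A)$ were \emph{not} of $(\Theta,\Lambda)$-exponential type $\gamma$, then there would be some $\varepsilon_0>0$ for which no uniform bound $\abs{\Theta_{[\mathbf n],t}^{L,\Lambda,A}}\le K e^{(\gamma+\varepsilon_0)t}$ holds; hence for each $K$ one finds $A\in\mathcal A$, $\mathbf n_0$, and a positive-measure set $S$ of times on which $\abs{\Theta_{[\mathbf n_0],s}}>K e^{(\gamma+\varepsilon_0)s}=:f(s)$. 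Feeding this into Proposition~\ref{PropNConvCoeffNConvSolGeneral} produces, for each such $K$, an initial datum and a time $t$ with $\norm{u_t}_p>C\min_{s\in[t-L_{\max},t]}f(s)\,\norm{u_0}_p$, and since $\min_{s\in[t-L_{\max},t]}e^{(\gamma+\varepsilon_0)s}$ differs from $e^{(\gamma+\varepsilon_0)t}$ only by a fixed constant, this contradicts exponential type $\gamma$ (indeed it contradicts exponential type $\gamma+\varepsilon_0/2$) by letting $K\to+\infty$. The final formula \eqref{LyapTheta} is then pure bookkeeping: Proposition~\ref{PropLyapExpo} and Remark~\ref{RemkLimsupThetaXi} identify both $\lambda_p(L,\mathcal A)$ and the right-hand $\limsup$ as the infimum of the exponential types (of $\Sigma_\delta(L,\mathcal A)$ and of its $\Theta$-coefficients, respectively), and the two implications just proved show these two families of exponential types coincide for $L\in W_+(\Lambda)$, so their infima agree.

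The main subtlety, and the point I would be careful to state cleanly, is the matching of the \emph{directions} of the two implications with the constraints $L\in V_+(\Lambda)$ versus $L\in W_+(\Lambda)$: the upper bound (Proposition~\ref{PropConvCoeffConvSolGeneral}) is available on all of $V_+(\Lambda)$, whereas the lower bound (Proposition~\ref{PropNConvCoeffNConvSolGeneral}) requires $L\in W_+(\Lambda)$, which is exactly why the converse implication and the Lyapunov-exponent formula are stated only for $W_+(\Lambda)$. The remaining technical care is merely in absorbing the polynomial factor $(t+1)^{N-1}$ and the finitely many boundary constants into the $\varepsilon$-slack of the exponential, which is the routine manipulation alluded to in the sentence preceding the theorem.
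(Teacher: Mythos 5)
Your proposal is correct and is essentially identical to the paper's own argument: the paper obtains Theorem \ref{TheoP0EquivP1} precisely by taking $f(t) = K e^{(\gamma + \varepsilon) t}$ in Propositions \ref{PropConvCoeffConvSolGeneral} and \ref{PropNConvCoeffNConvSolGeneral} (the latter used contrapositively for the converse direction on $W_+(\Lambda)$), and then deduces \eqref{LyapTheta} from Proposition \ref{PropLyapExpo} and Remark \ref{RemkLimsupThetaXi}, exactly as you do. Your explicit absorption of the polynomial factor $(t+1)^{N-1}$ and of the boundary constants into the $\varepsilon$-slack is just the routine verification the paper leaves implicit, and you correctly identify why the converse is restricted to $W_+(\Lambda)$.
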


\begin{remark}
It also follows from Proposition \ref{PropConvCoeffConvSolGeneral} that, in the first part of the theorem, the constant $K > 0$ in the definition of exponential type of $\Sigma_\delta(L, \mathcal A)$ can be chosen independently of $p \in [1, +\infty]$. Moreover, the left-hand side of \eqref{LyapTheta} does not depend on $p$ and its right-hand side does not depend on $\Lambda$.
\end{remark}

\subsubsection{Shift-invariant classes}

We start this section by the following technical result.

\begin{lemma}
\label{LemmXiTimeTransl}
For every $\Lambda \in (\mathbb R_+^\ast)^N$, $L \in V_+(\Lambda)$, $A: \mathbb R \to \mathcal M_d(\mathbb C)^N$, $\mathbf n \in \mathbb Z^N$, and $t, \tau \in \mathbb R$, we have
\begin{equation*}
\Xi_{\mathbf n, t + \tau}^{L, A} = \Xi_{\mathbf n, t}^{L, A(\cdot + \tau)} \qquad \text{ and } \qquad \widehat\Xi_{[\mathbf n], t + \tau}^{L, \Lambda, A} = \widehat\Xi_{[\mathbf n], t}^{L, \Lambda, A(\cdot + \tau)}.
\end{equation*}
\end{lemma}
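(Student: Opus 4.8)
The plan is to prove both identities directly from the inductive definition \eqref{DefiXi} of $\Xi$, with the first identity being the essential one and the second following immediately. The key observation is that the recursion \eqref{DefiXi} for $\Xi_{\mathbf n, t}^{L, A}$ couples the time argument $t$ with the matrix function $A$ evaluated at $t$, and the time-shift simply relabels where $A$ is sampled.

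First I would fix $\tau \in \mathbb R$ and prove $\Xi_{\mathbf n, t+\tau}^{L, A} = \Xi_{\mathbf n, t}^{L, A(\cdot + \tau)}$ for all $t \in \mathbb R$ by induction on $\abs{\mathbf n}_1$ (treating $\mathbf n \in \mathbb Z^N \setminus \mathbb N^N$ and $\mathbf n = 0$ as trivial base cases, since both sides are then $0$ or $\id_d$ regardless of the time argument). For the inductive step with $\mathbf n \in \mathbb N^N \setminus \{0\}$, I would apply the third branch of \eqref{DefiXi} to the left-hand side:
\[
\Xi_{\mathbf n, t+\tau}^{L, A} = \sum_{k=1}^N A_k(t + \tau) \Xi_{\mathbf n - e_k, t + \tau - L_k}^{L, A}.
\]
The induction hypothesis applies to each $\Xi_{\mathbf n - e_k, (t - L_k) + \tau}^{L, A}$ since $\abs{\mathbf n - e_k}_1 = \abs{\mathbf n}_1 - 1$, giving $\Xi_{\mathbf n - e_k, t - L_k}^{L, A(\cdot + \tau)}$. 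Writing $B = A(\cdot + \tau)$, so that $B_k(t) = A_k(t + \tau)$, the sum becomes $\sum_{k=1}^N B_k(t) \Xi_{\mathbf n - e_k, t - L_k}^{L, B} = \Xi_{\mathbf n, t}^{L, B}$ by \eqref{DefiXi} applied to $B$. This is exactly the claimed equality. An even cleaner route would be to invoke the closed form \eqref{XiExplicit}: replacing $t$ by $t + \tau$ there shifts every argument $t - L \cdot \mathbf p_v(k)$ by $\tau$, which is precisely evaluating each $A_{v_k}$ at $(\cdot + \tau)$, so the identity is immediate for $\mathbf n \neq 0$.

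The second identity then follows with no further work from the definition \eqref{EqDefiXiHat} of $\widehat\Xi$, since summing the first identity over $\mathbf n' \in [\mathbf n]$ gives
\[
\widehat\Xi_{[\mathbf n], t+\tau}^{L, \Lambda, A} = \sum_{\mathbf n' \in [\mathbf n]} \Xi_{\mathbf n', t+\tau}^{L, A} = \sum_{\mathbf n' \in [\mathbf n]} \Xi_{\mathbf n', t}^{L, A(\cdot + \tau)} = \widehat\Xi_{[\mathbf n], t}^{L, \Lambda, A(\cdot + \tau)},
\]
where the sum is well-defined (finitely many nonzero terms) by Remark \ref{RemkLambdaPrime}. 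I do not anticipate a genuine obstacle here; the only point requiring a little care is ensuring the base cases of the induction are handled uniformly, since the time argument of $\Xi_{\mathbf n, \cdot}$ plays no role when $\mathbf n \in \mathbb Z^N \setminus \mathbb N^N$ or $\mathbf n = 0$. If one prefers the route through \eqref{XiExplicit}, one should simply note separately that the cases $\mathbf n \in \mathbb Z^N \setminus \mathbb N^N$ and $\mathbf n = 0$ are trivial, since \eqref{XiExplicit} is stated only for $\mathbf n \in \mathbb N^N \setminus \{0\}$.
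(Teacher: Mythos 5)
Your proof is correct and matches the paper's: the paper likewise dispatches the cases $\mathbf n \in \mathbb Z^N \setminus \mathbb N^N$ and $\mathbf n = 0$ as trivial from \eqref{DefiXi}, handles $\mathbf n \in \mathbb N^N \setminus \{0\}$ via the explicit formula \eqref{XiExplicit} (your ``cleaner route''), and deduces the second identity from the first via \eqref{EqDefiXiHat}. Your inductive argument through the recursion is a harmless equivalent variant, and you correctly flag the only subtlety, namely the uniform handling of the degenerate indices $\mathbf n - e_k \notin \mathbb N^N$.
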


\begin{proof}
The first part holds trivially if $\mathbf n \in \mathbb Z^N \setminus \mathbb N^N$ or if $\mathbf n = 0$, for, in these cases, it follows from \eqref{DefiXi} that $\Xi_{\mathbf n, t}^{L, A}$ does not depend on $t$ and $A$. If $\mathbf n \in \mathbb N^N \setminus \{0\}$, the conclusion follows as a consequence of the explicit formula \eqref{XiExplicit} for $\Xi_{\mathbf n, t}^{L, A}$. The second part is a consequence of the first and \eqref{EqDefiXiHat}.
\end{proof}

We next provide a proposition establishing a relation between the behavior of $\widehat\Xi_{[\mathbf n], t}$ and $\Theta_{[\mathbf n], t}$. Notice that, if a subset $\mathcal A$ of $L^\infty_{\mathrm{loc}}(\mathbb R, \mathcal M_d(\mathbb C)^N)$ is shift-invariant, then $\mathcal A$ is uniformly locally bounded if and only if it is bounded.

\begin{proposition}
Let $\mathcal A$ be a bounded shift-invariant subset of $L^\infty(\mathbb R, \mathcal M_d(\mathbb C)^N)$, $L \in V_+(\Lambda)$, and $f: \mathbb R \to \mathbb R_+^\ast$ be a continuous function. Then the following assertions hold.

\begin{enumerate}[label={\bf \roman*.}, ref={(\roman*)}]
\item\label{ShiftImpl1} If $\abs{\Theta_{[\mathbf n], t}^{L, \Lambda, A}} \leq f(t)$ holds for every $A \in \mathcal A$, $\mathbf n \in \mathbb N^N$, and almost every $t \in \left(L \cdot \mathbf n - L_{\max}, L \cdot \mathbf n\right)$, then, for every $A \in \mathcal A$, $\mathbf n \in \mathbb N^N \setminus \{0\}$, and almost every $t \in \mathbb R$, one has $\abs{\widehat\Xi_{[\mathbf n], t}^{L, \Lambda, A}} \leq \max_{s \in [L \cdot \mathbf n - L_{\min}, L \cdot \mathbf n]} \allowbreak{}f(s)$.

\item\label{ShiftImpl2} If $\abs{\widehat\Xi_{[\mathbf n], t}^{L, \Lambda, A}} \leq f(L \cdot \mathbf n)$ holds for every $A \in \mathcal A$, $\mathbf n \in \mathbb N^N$, and almost every $t \in \mathbb R$, then there exists a constant $C > 0$ such that, for every $A \in \mathcal A$, $\mathbf n \in \mathbb N^N$, and almost every $t \in \left(L \cdot \mathbf n - L_{\max}, L \cdot \mathbf n\right)$, one has $\abs{\Theta_{[\mathbf n], t}^{L, \Lambda, A}} \leq C \max_{s \in [t - L_{\max}, t + L_{\max}]}f(s)$.
\end{enumerate}
\end{proposition}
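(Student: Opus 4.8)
The plan is to reduce both parts to one algebraic identity between $\Theta$ and $\widehat\Xi$ valid on a short time window, and then to propagate the resulting bounds: by shift-invariance for part~\ref{ShiftImpl1}, and by a direct termwise estimate for part~\ref{ShiftImpl2}. First I would record the following observation. Fix $\mathbf n \in \mathbb N^N \setminus \{0\}$ and $t$ with $L \cdot \mathbf n - L_{\min} \leq t < L \cdot \mathbf n$. Since $L_j \geq L_{\min}$ for every $[j] \in \mathcal J$, the constraint $L \cdot \mathbf n - L_j \leq t$ appearing in \eqref{EqDefiTheta} is then satisfied by \emph{all} $[j]$, so comparing \eqref{EqDefiTheta} with the second recurrence of Proposition~\ref{PropXiHatRecurrence} gives
\[
\Theta_{[\mathbf n], t}^{L, \Lambda, A} = \widehat\Xi_{[\mathbf n], t}^{L, \Lambda, A}, \qquad L \cdot \mathbf n - L_{\min} \leq t < L \cdot \mathbf n.
\]
This identity drives the whole argument.

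For part~\ref{ShiftImpl1}, set $I_{\mathbf n} = [L \cdot \mathbf n - L_{\min}, L \cdot \mathbf n)$ and $M_{\mathbf n} = \max_{s \in [L \cdot \mathbf n - L_{\min}, L \cdot \mathbf n]} f(s)$. The identity above together with the hypothesis yields $\abs{\widehat\Xi_{[\mathbf n], t}^{L, \Lambda, A}} \leq f(t) \leq M_{\mathbf n}$ for every $A \in \mathcal A$ and a.e.\ $t \in I_{\mathbf n}$. To upgrade this to a.e.\ $t \in \mathbb R$, I would fix $A \in \mathcal A$, set $E = \{t \in \mathbb R : \abs{\widehat\Xi_{[\mathbf n], t}^{L, \Lambda, A}} > M_{\mathbf n}\}$, and show $E$ is null. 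For each $k \in \mathbb Z$, shift-invariance gives $A(\cdot + k L_{\min}) \in \mathcal A$, and Lemma~\ref{LemmXiTimeTransl} converts the interval bound for $A(\cdot + k L_{\min})$ into $\abs{\widehat\Xi_{[\mathbf n], s + k L_{\min}}^{L, \Lambda, A}} \leq M_{\mathbf n}$ for a.e.\ $s \in I_{\mathbf n}$; hence $E \cap (I_{\mathbf n} + k L_{\min})$ has measure zero. Since the intervals $I_{\mathbf n} + k L_{\min}$, $k \in \mathbb Z$, each have length $L_{\min}$ and tile $\mathbb R$, I conclude that $E$ is null, which is the assertion.

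For part~\ref{ShiftImpl2}, I would estimate directly from \eqref{EqDefiTheta}. Boundedness of $\mathcal A$ supplies a uniform bound $\abs{\widehat A_{[j]}^\Lambda(\cdot)} \leq c_0$ (each $\widehat A_{[j]}^\Lambda$ being a sum of at most $N$ entries of $A$), while the hypothesis gives $\abs{\widehat\Xi_{[\mathbf n - e_j], t}^{L, \Lambda, A}} \leq f(L \cdot \mathbf n - L_j)$ for every class $[\mathbf n - e_j]$ meeting $\mathbb N^N$ (the others contributing $0$ by Remark~\ref{RemkLambdaPrime}). Thus $\abs{\Theta_{[\mathbf n], t}^{L, \Lambda, A}} \leq c_0 \sum_{[j]} f(L \cdot \mathbf n - L_j)$, the sum running over the classes with $L \cdot \mathbf n - L_j \leq t$. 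For $t \in (L \cdot \mathbf n - L_{\max}, L \cdot \mathbf n)$ the constraint $L \cdot \mathbf n - L_j \leq t$ together with $L_j \leq L_{\max}$ and $t < L \cdot \mathbf n$ forces $L \cdot \mathbf n - L_j \in (t - L_{\max}, t]$, so each argument lies in $[t - L_{\max}, t + L_{\max}]$; bounding each summand by $\max_{s \in [t - L_{\max}, t + L_{\max}]} f(s)$ and the number of classes by $N$ gives the claim with $C = N c_0$.

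The computations are routine once the identity $\Theta = \widehat\Xi$ on $I_{\mathbf n}$ is in hand; the only genuinely delicate point is the measure-theoretic step in part~\ref{ShiftImpl1}, where shift-invariance and the tiling of $\mathbb R$ by integer translates of $I_{\mathbf n}$ are used to pass from an almost-everywhere bound on a single interval to an almost-everywhere bound on the whole line. The window check $L \cdot \mathbf n - L_j \in [t - L_{\max}, t + L_{\max}]$ in part~\ref{ShiftImpl2} is the other small bookkeeping verification.
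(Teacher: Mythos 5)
Your proposal is correct and follows essentially the same route as the paper's proof: the driving identity $\Theta_{[\mathbf n],t}^{L,\Lambda,A}=\widehat\Xi_{[\mathbf n],t}^{L,\Lambda,A}$ for $t\in[L\cdot\mathbf n-L_{\min},\,L\cdot\mathbf n)$, obtained by comparing \eqref{EqDefiTheta} with the second recurrence of Proposition \ref{PropXiHatRecurrence}, is exactly the paper's key step, and your use of shift-invariance with Lemma \ref{LemmXiTimeTransl} to cover $\mathbb R$ by the translates $I_{\mathbf n}+kL_{\min}$ is the paper's argument up to replacing its backward shifts $A(\cdot-kL_{\min})$ by forward ones. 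Part \ref{ShiftImpl2} likewise matches the paper's direct termwise estimate (your constant $Nc_0$ is the paper's $N^2M$), including the observation that classes $[\mathbf n-e_j]$ not meeting $\mathbb N^N$ contribute zero.
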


\begin{proof}
We start by showing \ref{ShiftImpl1}. Let $A \in \mathcal A$ and $\mathbf n \in \mathbb N^N \setminus \{0\}$. For every $k \in \mathbb Z$, there exists a set $N_k \subset \left[L \cdot \mathbf n - L_{\min}, L \cdot \mathbf n\right)$ of measure zero such that, for every $t \in \left[L \cdot \mathbf n - L_{\min}, L \cdot \mathbf n\right) \setminus N_k$,
\[
\begin{split}
\abs{\widehat\Xi_{[\mathbf n], t}^{L, \Lambda, A(\cdot - k L_{\min})}} & = \abs{\sum_{[j] \in \mathcal J} \widehat\Xi_{[\mathbf n - e_j], t}^{L, \Lambda, A(\cdot - k L_{\min})} \widehat A_{[j]}^\Lambda(t - k L_{\min} - L \cdot \mathbf n + L_j)} \\
  & = \abs{\Theta_{[\mathbf n], t}^{L, \Lambda, A(\cdot - k L_{\min})}} \leq f(t),
\end{split}
\]
where we use Proposition \ref{PropXiHatRecurrence}, the fact that $L \cdot \mathbf n - L_j \leq L \cdot \mathbf n - L_{\min} \leq t$ for every $[j] \in \mathcal J$, and Equation \eqref{EqDefiTheta}.

Let $N = \bigcup_{k \in \mathbb Z} (N_k - k L_{\min})$, which is of measure zero. For $t \in \mathbb R \setminus N$, let $k \in \mathbb Z$ be such that $t \in \left[L \cdot \mathbf n - (k + 1) L_{\min}, L \cdot \mathbf n - k L_{\min}\right)$, so that $t + k L_{\min} \in \left[L \cdot \mathbf n - L_{\min}, L \cdot \mathbf n\right)$. Since $t \notin N$, we have $t + k L_{\min} \notin N_k$, and so, using Lemma \ref{LemmXiTimeTransl}, we obtain that
\[\abs{\widehat\Xi_{[\mathbf n], t}^{L, \Lambda, A}} = \abs{\widehat\Xi_{[\mathbf n], t + k L_{\min}}^{L, \Lambda, A(\cdot - k L_{\min})}} \leq f(t + k L_{\min}) \leq \max_{s \in [L \cdot \mathbf n - L_{\min}, L \cdot \mathbf n]} f(s).\]

Let us now show \ref{ShiftImpl2}. Without loss of generality, the norm $\abs{\cdot}$ is sub-multiplicative. Since $\mathcal A$ is bounded, there exists $M > 0$ such that, for every $A \in \mathcal A$, $j \in \llbracket 1, N\rrbracket$, and $t \in \mathbb R$, we have $\abs{A_j(t)} \leq M$. Let $A \in \mathcal A$. For every $\mathbf n \in \mathbb N^N$, let $N_{[\mathbf n]}$ be a set of measure zero such that $\abs{\widehat\Xi_{[\mathbf n], t}^{L, \Lambda, A}} \leq f(L \cdot \mathbf n)$ holds for every $t \in \mathbb R \setminus N_{[\mathbf n]}$. Let $N = \bigcup_{\mathbf n \in \mathbb N^N} N_{[\mathbf n]}$, which is of measure zero. If $\mathbf n \in \mathbb N^N$ and $t \in \left(L \cdot \mathbf n - L_{\max}, L \cdot \mathbf n\right) \setminus N$, then
\[
\begin{split}
\abs{\Theta_{[\mathbf n], t}^{L, \Lambda, A}} & \leq \sum_{\substack{[j] \in \mathcal J \\ L \cdot \mathbf n - L_j \leq t}} \abs{\widehat\Xi_{[\mathbf n - e_j], t}^{L, \Lambda, A}} \abs{\widehat A_{[j]}^{\Lambda}(t - L \cdot \mathbf n + L_j)} \\
 & \leq N M \sum_{[j] \in \mathcal J} f(L \cdot \mathbf n - L_j) \leq C \max_{s \in [t - L_{\max}, t + L_{\max}]} f(s),
\end{split}
\]
where $C = N^2 M$.
\end{proof}

As an immediate consequence of the previous proposition and Theorem \ref{TheoP0EquivP1}, we have the following theorem, which improves Theorem \ref{TheoP0EquivP1} by replacing $(\Theta, \Lambda)$-exponential type by $(\widehat\Xi, \Lambda)$-exponential type.

\begin{theorem}
\label{TheoStabShiftInv}
Let $\Lambda \in (\mathbb R_+^\ast)^N$ and $\mathcal A$ be a bounded shift-invariant subset of $L^\infty(\mathbb R, \mathcal M_d(\mathbb C)^N)$. For every $L \in V_+(\Lambda)$, $\Sigma_\delta(L, \mathcal A)$ is of $(\widehat\Xi, \Lambda)$-exponential type $\gamma$ if and only if it is of $(\Theta, \Lambda)$-ex\-po\-nen\-tial type $\gamma$.

As a consequence, for every $L \in V_+(\Lambda)$, if $\Sigma_\delta(L, \mathcal A)$ is of $(\widehat\Xi, \Lambda)$-exponential type $\gamma$ then, for every $p \in [1, +\infty]$, it is of exponential type $\gamma$ in $\mathsf X_p^\delta$. Conversely, for every $L \in W_+(\Lambda)$, if there exists $p \in [1, +\infty]$ such that $\Sigma_\delta(L, \mathcal A)$ is of exponential type $\gamma$ in $\mathsf X_p^\delta$, then it is of $(\widehat\Xi, \Lambda)$-exponential type $\gamma$. Finally, for every $L \in W_+(\Lambda)$ and $p \in [1, +\infty]$,
\begin{equation}
\label{LambdaLyapXi}
\lambda_p(L, \mathcal A) = \limsup_{L \cdot \mathbf n \to +\infty} \sup_{A \in \mathcal A} \esssup_{t \in \mathbb R} \frac{\ln\abs{\widehat\Xi_{[\mathbf n], t}^{L, \Lambda, A}}}{L \cdot \mathbf n}.
\end{equation}
\end{theorem}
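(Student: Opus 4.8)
The plan is to reduce everything to the equivalence between $(\widehat\Xi, \Lambda)$- and $(\Theta, \Lambda)$-exponential type, which is the only new content; the ``As a consequence'' block and the formula \eqref{LambdaLyapXi} will then drop out of Theorem \ref{TheoP0EquivP1} and Remark \ref{RemkLimsupThetaXi}. To prove the equivalence I would feed the preceding proposition with the one-parameter family of comparison functions $f(t) = K e^{(\gamma + \varepsilon) t}$, for $\varepsilon > 0$ and $K > 0$, and check that passing such a single exponential through the $L_{\min}$- and $L_{\max}$-windows appearing there affects only the multiplicative constant, not the rate $\gamma + \varepsilon$.

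For the direction $(\Theta, \Lambda)$-type $\Rightarrow (\widehat\Xi, \Lambda)$-type, I fix $\varepsilon > 0$, choose $K$ so that $\abs{\Theta_{[\mathbf n], t}^{L, \Lambda, A}} \leq K e^{(\gamma + \varepsilon) t} =: f(t)$ holds for every $A \in \mathcal A$, every $\mathbf n$, and a.e.\ $t \in \left(L \cdot \mathbf n - L_{\max}, L \cdot \mathbf n\right)$, and then apply part \ref{ShiftImpl1} of the preceding proposition to obtain $\abs{\widehat\Xi_{[\mathbf n], t}^{L, \Lambda, A}} \leq \max_{s \in [L \cdot \mathbf n - L_{\min}, L \cdot \mathbf n]} f(s)$ for a.e.\ $t \in \mathbb R$ and every $\mathbf n \neq 0$. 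Since $f$ is a single exponential this maximum is $K e^{(\gamma+\varepsilon) L \cdot \mathbf n}$ when $\gamma + \varepsilon \geq 0$ and $K e^{(\gamma+\varepsilon)(L \cdot \mathbf n - L_{\min})}$ otherwise, so in both cases it is bounded by $K' e^{(\gamma + \varepsilon) L \cdot \mathbf n}$ with $K' = K \max\left(1, e^{-(\gamma+\varepsilon) L_{\min}}\right)$ independent of $\mathbf n$, $t$ and $A$; the case $\mathbf n = 0$ is trivial since $\widehat\Xi_{[0], t}^{L, \Lambda, A} = \id_d$, so enlarging $K'$ once covers it, giving $(\widehat\Xi, \Lambda)$-type $\gamma$. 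The reverse direction is symmetric: from $\abs{\widehat\Xi_{[\mathbf n], t}^{L, \Lambda, A}} \leq K e^{(\gamma+\varepsilon) L \cdot \mathbf n} = f(L \cdot \mathbf n)$, part \ref{ShiftImpl2} gives $\abs{\Theta_{[\mathbf n], t}^{L, \Lambda, A}} \leq C \max_{s \in [t - L_{\max}, t + L_{\max}]} f(s)$, and the same single-exponential estimate bounds the right-hand side by $C K e^{\abs{\gamma + \varepsilon} L_{\max}} e^{(\gamma+\varepsilon) t}$, i.e.\ $(\Theta, \Lambda)$-type $\gamma$. This settles the first assertion for every $L \in V_+(\Lambda)$.

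The ``As a consequence'' block is then immediate from Theorem \ref{TheoP0EquivP1}: for $L \in V_+(\Lambda)$, $(\widehat\Xi, \Lambda)$-type $\gamma$ is equivalent to $(\Theta, \Lambda)$-type $\gamma$, which implies exponential type $\gamma$ in every $\mathsf X_p^\delta$; while for $L \in W_+(\Lambda)$, exponential type $\gamma$ in some $\mathsf X_p^\delta$ implies $(\Theta, \Lambda)$-type $\gamma$, hence $(\widehat\Xi, \Lambda)$-type $\gamma$. Finally, \eqref{LambdaLyapXi} follows by chaining the two characterizations of Remark \ref{RemkLimsupThetaXi} with the equivalence: the right-hand side of \eqref{LambdaLyapXi} equals $\inf\{\gamma \;|\: \Sigma_\delta(L, \mathcal A) \text{ is of } (\widehat\Xi, \Lambda)\text{-exponential type } \gamma\}$, which coincides with $\inf\{\gamma \;|\: \Sigma_\delta(L, \mathcal A) \text{ is of } (\Theta, \Lambda)\text{-exponential type } \gamma\}$, and the latter equals $\lambda_p(L, \mathcal A)$ by \eqref{LyapTheta}.

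I do not expect a genuine obstacle, since the statement really is a corollary. The only point requiring care is the bookkeeping of signs when replacing $f(t) = K e^{(\gamma+\varepsilon)t}$ by its maximum over an $L_{\min}$- or $L_{\max}$-window: one must verify that the exponential rate $\gamma + \varepsilon$ is preserved and that all the incurred factors $e^{\pm(\gamma+\varepsilon) L_{\min}}$ and $e^{\pm(\gamma+\varepsilon) L_{\max}}$ are absorbed into the constant uniformly in $\mathbf n$ and $A$, so that the resulting constant still depends only on $\varepsilon$ (through $K$) and on the fixed data $L$, $N$, $\mathcal A$.
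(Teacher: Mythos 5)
Your proposal is correct and follows essentially the same route as the paper, which derives this theorem as an immediate consequence of the preceding proposition (applied with $f(t) = K e^{(\gamma + \varepsilon) t}$) together with Theorem \ref{TheoP0EquivP1} and Remark \ref{RemkLimsupThetaXi}. Your sign bookkeeping for absorbing the window factors $e^{\pm(\gamma+\varepsilon)L_{\min}}$, $e^{\pm(\gamma+\varepsilon)L_{\max}}$ into the constant, and your handling of the trivial case $\widehat\Xi_{[0], t}^{L, \Lambda, A} = \id_d$, are exactly the details the paper leaves implicit.
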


\subsubsection{Arbitrary switching}
\label{SecSwitched}

We consider in this section $\mathcal A$ of the type $\mathcal A = L^\infty(\mathbb R, \mathfrak B)$ with $\mathfrak B$ a nonempty bounded subset of $\mathcal M_d(\mathbb C)^N$. In this case, $\Sigma_{\delta}(L, \mathcal A)$ corresponds to a switched system under arbitrary $\mathfrak B$-valued switching signals (for a general discussion on switched systems and their stability, see e.g. \cite{Liberzon2003Switching, Sun2011Stability} and references therein).

Motivated by formula \eqref{EqExplicitXiHat} for $\widehat\Xi_{[\mathbf n], t}$, we define below a new measure of the asymptotic behavior of $\Sigma_\delta(L, \mathcal A)$. For this, we introduce, for $\Lambda\in (\mathbb R_+^\ast)^N$ and $x \in \mathbb R_+$,
\begin{equation}
\label{LLambda}
\mathcal L(\Lambda) = \{\Lambda \cdot \mathbf n \;|\: \mathbf n \in \mathbb N^N\} \qquad \text{and} \qquad \mathcal L_x(\Lambda) = \mathcal L(\Lambda) \cap \left[0, x\right).
\end{equation}

\begin{definition}
We define
\begin{equation*}
\mu(\Lambda, \mathfrak B) = \limsup_{\substack{x \to +\infty \\ x \in \mathcal L(\Lambda)}} \sup_{\substack{B^r \in \mathfrak B \\ \text{for } r \in \mathcal L_x(\Lambda)}} \abs{\sum_{\substack{\mathbf n \in \mathbb N^N \\ \Lambda \cdot \mathbf n = x}} \sum_{v \in V_{\mathbf n}} \prod_{k=1}^{\abs{\mathbf n}_{1}} B_{v_k}^{\Lambda \cdot \mathbf p_v(k)}}^{\frac{1}{x}}.
\end{equation*}
\end{definition}

Note that $\mu(\Lambda, \mathfrak B)$ is independent of the choice of the norm $\abs{\cdot}$ and $\mu(\Lambda, \mathfrak B) = \mu(\Lambda, \overline{\mathfrak B})$. The main result of this section is the following.

\begin{theorem}
\label{TheoMuLyap}
Let $\Lambda \in (\mathbb R_+^\ast)^N$, $L \in V_+(\Lambda)$, $\mathfrak B$ be a nonempty bounded subset of $\mathcal M_d(\mathbb C)^N$, $\mathcal A = L^\infty(\mathbb R, \mathfrak B)$, and $p \in [1, +\infty]$. Set $m_1 = \min_{j \in \llbracket 1, N\rrbracket}\frac{\Lambda_j}{L_j}$ and $m_2 = \max_{j \in \llbracket 1, N\rrbracket}\frac{\Lambda_j}{L_j}$ if $\mu(\Lambda, \mathfrak B) < 1$, and $m_1 = \max_{j \in \llbracket 1, N\rrbracket}\frac{\Lambda_j}{L_j}$ and $m_2 = \min_{j \in \llbracket 1, N\rrbracket}\frac{\Lambda_j}{L_j}$ if $\mu(\Lambda, \mathfrak B) \geq 1$. Then the following assertions hold:
\begin{enumerate}[label={\bf \roman*.}, ref={(\roman*)}]
\item\label{MuLyap1} $\lambda_p(L, \mathcal A) \leq m_1 \ln \mu(\Lambda, \mathfrak B)$;

\item\label{MuLyap2} if $L \in W_+(\Lambda)$, then $m_2 \lambda_p(\Lambda, \mathcal A) \leq \lambda_p(L, \mathcal A) \leq m_1 \lambda_p(\Lambda, \mathcal A)$;

\item\label{MuLyap3} $\lambda_p(\Lambda, \mathcal A) = \ln \mu(\Lambda, \mathfrak B)$.
\end{enumerate}
\end{theorem}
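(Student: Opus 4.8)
The plan is to reduce all three assertions to the $\limsup$-characterization of the maximal Lyapunov exponent in terms of the coefficients $\widehat\Xi$ provided by Theorem \ref{TheoStabShiftInv}, and then to match those coefficients with the combinatorial quantity defining $\mu(\Lambda,\mathfrak B)$. Since $\mathcal A = L^\infty(\mathbb R,\mathfrak B)$ with $\mathfrak B$ bounded is a bounded shift-invariant class, Theorem \ref{TheoStabShiftInv} applies and yields, for every $L\in W_+(\Lambda)$ and $p\in[1,+\infty]$, the identity \eqref{LambdaLyapXi}. The first thing I would do is introduce, for $x\in\mathcal L(\Lambda)$, the finite quantity
\[
G(x) = \sup_{\substack{B^r\in\mathfrak B\\ r\in\mathcal L_x(\Lambda)}}\abs{\sum_{\substack{\mathbf n\in\mathbb N^N\\ \Lambda\cdot\mathbf n=x}}\sum_{v\in V_{\mathbf n}}\prod_{k=1}^{\abs{\mathbf n}_1}B^{\Lambda\cdot\mathbf p_v(k)}_{v_k}},
\]
so that by definition $\ln\mu(\Lambda,\mathfrak B) = \limsup_{x\to+\infty,\,x\in\mathcal L(\Lambda)}\tfrac{\ln G(x)}{x}$, noting that $G(x)<+\infty$ because for fixed $x$ the sum runs over finitely many multi-indices and paths, and $G$ is bounded on bounded sets.

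The technical heart, which I expect to be the main obstacle, is the comparison
\[
\sup_{A\in\mathcal A}\esssup_{t\in\mathbb R}\abs{\widehat\Xi^{L,\Lambda,A}_{[\mathbf n],t}} \le G(\Lambda\cdot\mathbf n),\qquad L\in V_+(\Lambda),\ \mathbf n\in\mathbb N^N,
\]
with equality whenever $L\in W_+(\Lambda)$. Starting from the explicit formula \eqref{EqExplicitXiHat}, $\widehat\Xi^{L,\Lambda,A}_{[\mathbf n],t}$ is a sum over $\mathbf n'\in[\mathbf n]\cap\mathbb N^N=\{\mathbf n':\Lambda\cdot\mathbf n'=\Lambda\cdot\mathbf n\}$ of products of matrices $A_{v_k}(t-L\cdot\mathbf p_v(k))$. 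For a fixed $A\in L^\infty(\mathbb R,\mathfrak B)$ the matrix used at a partial sum $\mathbf p_v(k)$ depends only on the value $L\cdot\mathbf p_v(k)$, i.e.\ is constant on the $L$-classes of partial sums; since $Z(\Lambda)\subseteq Z(L)$ makes the partition by $\Lambda$-values finer than that by $L$-values, such an assignment is in particular constant on $\Lambda$-classes and hence is one of the configurations entering $G(\Lambda\cdot\mathbf n)$, so the triangle inequality gives the upper bound. When $Z(L)=Z(\Lambda)$ the two partitions coincide and the matching lower bound follows from the freedom, intrinsic to arbitrary switching, to prescribe $A$ independently at the finitely many distinct points $t-L\cdot\mathbf p_v(k)$, so that every configuration is realized for $t$ in a set of positive measure. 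The careful bookkeeping of these partitions together with this realizability argument is where the real work lies.

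Granting this comparison, the three assertions follow. For \ref{MuLyap3} I would apply Theorem \ref{TheoStabShiftInv} with $L=\Lambda\in W_+(\Lambda)$: the equality case gives $\sup_A\esssup_t\abs{\widehat\Xi^{\Lambda,\Lambda,A}_{[\mathbf n],t}}=G(\Lambda\cdot\mathbf n)$, and \eqref{LambdaLyapXi} becomes $\lambda_p(\Lambda,\mathcal A)=\limsup\tfrac{\ln G(\Lambda\cdot\mathbf n)}{\Lambda\cdot\mathbf n}=\ln\mu(\Lambda,\mathfrak B)$. For \ref{MuLyap1}, the upper bound on $\widehat\Xi$ combined with a bound $G(x)\le K e^{(\ln\mu+\varepsilon)x}$ (valid for all $x\in\mathcal L(\Lambda)$ since $G$ is bounded on bounded sets) shows that $\Sigma_\delta(L,\mathcal A)$ is of $(\widehat\Xi,\Lambda)$-exponential type $m_1\ln\mu$: one replaces $\Lambda\cdot\mathbf n$ by $L\cdot\mathbf n$ using $m_-\,L\cdot\mathbf n\le\Lambda\cdot\mathbf n\le m_+\,L\cdot\mathbf n$, where $m_-=\min_j\Lambda_j/L_j$ and $m_+=\max_j\Lambda_j/L_j$, the correct inequality being selected by the sign of $\ln\mu$, which is exactly the definition of $m_1$. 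Theorem \ref{TheoStabShiftInv} (the implication valid for all $L\in V_+(\Lambda)$) upgrades this to exponential type $m_1\ln\mu$ in every $\mathsf X_p^\delta$, and Proposition \ref{PropLyapExpo} gives $\lambda_p(L,\mathcal A)\le m_1\ln\mu$.

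Finally, for \ref{MuLyap2} I would use $L\in W_+(\Lambda)$, so that \eqref{LambdaLyapXi} and the equality case give $\lambda_p(L,\mathcal A)=\limsup_{L\cdot\mathbf n\to+\infty}\tfrac{\ln G(\Lambda\cdot\mathbf n)}{L\cdot\mathbf n}$. Writing $\tfrac{\ln G(\Lambda\cdot\mathbf n)}{L\cdot\mathbf n}=\tfrac{\ln G(\Lambda\cdot\mathbf n)}{\Lambda\cdot\mathbf n}\cdot\tfrac{\Lambda\cdot\mathbf n}{L\cdot\mathbf n}$ with $\tfrac{\Lambda\cdot\mathbf n}{L\cdot\mathbf n}\in[m_-,m_+]$, the upper bound $\lambda_p(L)\le m_1\lambda_p(\Lambda)$ is just \ref{MuLyap1} combined with \ref{MuLyap3}. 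For the lower bound $\lambda_p(L)\ge m_2\lambda_p(\Lambda)$, I would take a sequence $\mathbf n_k$ realizing the $\limsup$ defining $\ln\mu=\lambda_p(\Lambda)$, pass to a subsequence along which $\tfrac{\Lambda\cdot\mathbf n_k}{L\cdot\mathbf n_k}\to\rho_\ast\in[m_-,m_+]$, and observe that $\tfrac{\ln G(\Lambda\cdot\mathbf n_k)}{L\cdot\mathbf n_k}\to\rho_\ast\ln\mu$; the sign of $\ln\mu$ dictates whether $\rho_\ast\ln\mu\ge m_-\ln\mu$ or $\ge m_+\ln\mu$, and $m_2$ is defined precisely so that this subsequential limit is $\ge m_2\ln\mu$, whence $\lambda_p(L)\ge m_2\lambda_p(\Lambda)$. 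The sign case-distinction in this denominator comparison is the only delicate point, and it is exactly encoded in the definitions of $m_1$ and $m_2$.
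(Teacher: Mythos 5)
Your proposal is correct and takes essentially the same route as the paper: the upper bound on $\widehat\Xi^{L,\Lambda,A}_{[\mathbf n],t}$ via the assignment $B^r = A(t-\varphi_L(r))$ (well defined precisely because $Z(\Lambda)\subset Z(L)$ makes the $\Lambda$-partition finer), the matching lower bound via a piecewise-constant switching signal realizing any configuration $(B^r)$ on a set of times of positive measure (using that the $\Lambda$- and $L$-partitions coincide when $Z(L)=Z(\Lambda)$), and the conversion to $\lambda_p$ through \eqref{LambdaLyapXi} of Theorem \ref{TheoStabShiftInv} and Proposition \ref{PropLyapExpo}. The only divergence is in \ref{MuLyap2}: the paper obtains it formally by applying \ref{MuLyap1} and \ref{MuLyap3} with the roles of $L$ and $\Lambda$ exchanged (since $\Lambda\in V_+(L)$ whenever $L\in W_+(\Lambda)$), whereas you prove the lower bound $m_2\lambda_p(\Lambda,\mathcal A)\le\lambda_p(L,\mathcal A)$ directly by a subsequence argument on the $\limsup$; both are valid, yours requiring the equality case of your comparison lemma for all $L\in W_+(\Lambda)$ rather than only for $L=\Lambda$.
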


\begin{proof}
Notice that \ref{MuLyap2} follows from \ref{MuLyap1} and \ref{MuLyap3} by exchanging the role of $L$ and $\Lambda$, since $\Lambda \in V_+(L)$ for every $L \in W_+(\Lambda)$.

Let us prove \ref{MuLyap1}. Since $\min_{j \in \llbracket 1, N\rrbracket} \frac{\Lambda_j}{L_j} \leq \frac{\Lambda \cdot \mathbf n}{L \cdot \mathbf n} \leq \max_{j \in \llbracket 1, N\rrbracket} \frac{\Lambda_j}{L_j}$ for every $\mathbf n \in \mathbb N^N \setminus \{0\}$, it suffices to show that, for every $\varepsilon > 0$, there exists $C > 0$ such that, for every $A \in \mathcal A$, $\mathbf n \in \mathbb N^N \setminus \{0\}$, and $t \in \mathbb R$, we have
\begin{equation}
\label{XiLeqRho}
\abs{\widehat\Xi_{[\mathbf n], t}^{L, \Lambda, A}} \leq C \left(\mu(\Lambda, \mathfrak B) + \varepsilon\right)^{\Lambda \cdot \mathbf n}.
\end{equation}

By definition of $\mu(\Lambda, \mathfrak B)$, there exists $X_0 \in \mathcal L(\Lambda)$ such that, for every $x \in \mathcal L(\Lambda)$ with $x \geq X_0$, we have
\[
\sup_{\substack{B^r \in \mathfrak B \\ \text{for } r \in \mathcal L_x(\Lambda)}} \abs{\sum_{\substack{\mathbf n \in \mathbb N^N \\ \Lambda \cdot \mathbf n = x}} \sum_{v \in V_{\mathbf n}} \prod_{k=1}^{\abs{\mathbf n}_{1}} B_{v_k}^{\Lambda \cdot \mathbf p_v(k)}} \leq \left(\mu(\Lambda, \mathfrak B) + \varepsilon\right)^{x}.
\]
Since $\mathfrak B$ is bounded, the quantity
\[
C^\prime = \max_{x \in \mathcal L_{X_0}(\Lambda)} \sup_{\substack{B^r \in \mathfrak B \\ \text{for } r \in \mathcal L_x(\Lambda)}} \abs{\sum_{\substack{\mathbf n \in \mathbb N^N \\ \Lambda \cdot \mathbf n = x}} \sum_{v \in V_{\mathbf n}} \prod_{k=1}^{\abs{\mathbf n}_{1}} B_{v_k}^{\Lambda \cdot \mathbf p_v(k)}}
\]
is finite. Setting $C = \max\{1, C^\prime, C^\prime (\mu(\Lambda, \mathfrak B) + \varepsilon)^{-X_0}\}$, we have, for every $x \in \mathcal L(\Lambda)$,
\begin{equation}
\label{BoundSup}
\sup_{\substack{B^r \in \mathfrak B \\ \text{for } r \in \mathcal L_x(\Lambda)}} \abs{\sum_{\substack{\mathbf n \in \mathbb N^N \\ \Lambda \cdot \mathbf n = r}} \sum_{v \in V_{\mathbf n}} \prod_{k=1}^{\abs{\mathbf n}_{1}} B_{v_k}^{\Lambda \cdot \mathbf p_v(k)}} \leq C \left(\mu(\Lambda, \mathfrak B) + \varepsilon\right)^{x}.
\end{equation}

Define $\varphi_L: \mathcal L(\Lambda) \to \mathcal L(L)$ by $\varphi_L(\Lambda \cdot \mathbf n) = L \cdot \mathbf n$. This is a well-defined function since $L \in V_+(\Lambda)$. Let $A \in \mathcal A$, $\mathbf n \in \mathbb N^N \setminus \{0\}$, and $t \in \mathbb R$. By Proposition \ref{PropXiHatRecurrence},
\begin{equation}
\label{ExplicitXiHat}
\widehat\Xi_{[\mathbf n], t}^{L, \Lambda, A} = \sum_{\mathbf n^\prime \in [\mathbf n] \cap \mathbb N^N} \sum_{v \in V_{\mathbf n^\prime}} \prod_{k=1}^{\abs{\mathbf n^\prime}_{1}} A_{v_k}\left(t - L \cdot \mathbf p_v(k)\right).
\end{equation}
For $r \in \mathcal L_{\Lambda \cdot \mathbf n}(\Lambda)$, we set $B^r = A(t - \varphi_L(r)) \in \mathfrak B$. Thus, for every $\mathbf n^\prime \in [\mathbf n] \cap \mathbb N^N$, $v \in V_{\mathbf n^\prime}$, and $k \in \llbracket 1, \abs{\mathbf n^\prime}_{1} \rrbracket$, we have, by definition of $\varphi_L$,
\begin{equation}
\label{BandA}
B_{v_k}^{\Lambda \cdot \mathbf p_v(k)} = A_{v_k}\left(t - \varphi_L \left(\Lambda \cdot \mathbf p_v(k)\right)\right) = A_{v_k}\left(t - L \cdot \mathbf p_v(k)\right).
\end{equation}
We thus obtain \eqref{XiLeqRho} by combining \eqref{BoundSup}, \eqref{ExplicitXiHat} and \eqref{BandA}.

In order to prove \ref{MuLyap3}, we are left to show the inequality $\ln \mu(\Lambda, \mathfrak B) \leq \lambda_p(\Lambda, \mathcal A)$. Let $x \in \mathcal L(\Lambda)$ and $A^0 \in \mathfrak B$. For $r \in \mathcal L_x(\Lambda)$, let $B^r \in \mathfrak B$. We define
\[
\zeta = \frac{1}{2} \min_{\substack{y_1, y_2 \in \mathcal L_x(\Lambda) \\ y_1 \not = y_2}} \abs{y_1 - y_2} > 0.
\]
Let $A = (A_1, \dotsc, A_N) \in \mathcal A$ be defined for $t \in \mathbb R$ by
\[
A(t) = 
\left\{
\begin{aligned}
& B^{\Lambda \cdot \mathbf m}, & & \begin{aligned}& \text{ if } \mathbf m \in \mathbb N^N \text{ is such that } \Lambda \cdot \mathbf m < x \\ & \text{ and } t \in (- \Lambda \cdot \mathbf m - \zeta, - \Lambda \cdot \mathbf m + \zeta),\end{aligned} \\
& A^0, & & \text{ otherwise}.
\end{aligned}
\right.
\]
The function $A$ is well-defined since the sets $(- \Lambda \cdot \mathbf m - \zeta, - \Lambda \cdot \mathbf m + \zeta)$ are disjoint for $\mathbf m \in \mathbb N^N$ with $\Lambda \cdot \mathbf m < x$. For every $\mathbf n \in \mathbb N^N$ with $\Lambda \cdot \mathbf n = x$, every $v \in V_{\mathbf n}$, $t \in (-\zeta, \zeta)$, and $k \in \llbracket 1, \abs{\mathbf n}_{1} \rrbracket$, we have
\[
A_{v_k}\left(t - \Lambda \cdot \mathbf p_v(k)\right) = B_{v_k}^{\Lambda \cdot \mathbf p_v(k)},
\]
and then, for every $\mathbf n^\prime \in \mathbb N^N$ with $\Lambda \cdot \mathbf n^\prime = x$, we have
\[
\sum_{\substack{\mathbf n \in \mathbb N^N \\ \Lambda \cdot \mathbf n = x}} \sum_{v \in V_{\mathbf n}} \prod_{k=1}^{\abs{\mathbf n}_{1}} B_{v_k}^{\Lambda \cdot \mathbf p_v(k)} = \sum_{\substack{\mathbf n \in \mathbb N^N \\ \Lambda \cdot \mathbf n = x}} \sum_{v \in V_{\mathbf n}} \prod_{k=1}^{\abs{\mathbf n}_{1}} A_{v_k}\left(t - \Lambda \cdot \mathbf p_v(k)\right) = \widehat\Xi_{[\mathbf n^\prime], t}^{\Lambda, \Lambda, A}.
\]
Hence, for every $\mathbf n^\prime \in \mathbb N^N$ with $\Lambda \cdot \mathbf n^\prime = x$, we have
\[
\abs{\sum_{\substack{\mathbf n \in \mathbb N^N \\ \Lambda \cdot \mathbf n = x}} \sum_{v \in V_{\mathbf n}} \prod_{k=1}^{\abs{\mathbf n}_{1}} B_{v_k}^{\Lambda \cdot \mathbf p_v(k)}}^{\frac{1}{x}} \leq \sup_{A \in \mathcal A} \esssup_{t \in \mathbb R} \abs{\widehat\Xi_{[\mathbf n^\prime], t}^{\Lambda, \Lambda, A}}^{\frac{1}{\Lambda \cdot \mathbf n^\prime}}.
\]
Since this holds for every choice of $B^r \in \mathfrak B$, $r \in \mathcal L_x(\Lambda)$, and $x \in \mathcal L(\Lambda)$, we deduce from \eqref{LambdaLyapXi} that $\ln \mu(\Lambda, \mathfrak B) \leq \lambda_p(\Lambda, \mathcal A)$.
\end{proof}

\begin{remark}
\label{RemkAClosed}
Since $\mu(\Lambda, \mathfrak B) = \mu(\Lambda, \overline{\mathfrak B})$, one has $\lambda_p(\Lambda, \mathcal A) = \lambda_p(\Lambda, L^\infty(\mathbb R, \overline{\mathfrak B}))$.
\end{remark}

As regards exponential stability of $\Sigma_\delta(L, \mathcal A)$, we deduce from the previous theorem and Remark \ref{RemkTransformation} the following corollary.

\begin{corollary}
\label{CoroSilkRho0}
Let $\Lambda \in (\mathbb R_+^\ast)^N$, $\mathfrak B$ be a nonempty bounded subset of $\mathcal M_d(\mathbb C)^N$, and $\mathcal A = L^\infty(\mathbb R, \mathfrak B)$. The following statements are equivalent:
\begin{enumerate}[label={\bf \roman*.}, ref={(\roman*)}]
\item $\mu(\Lambda, \mathfrak B) < 1$;
\item $\Sigma_\delta(\Lambda, \mathcal A)$ is exponentially stable in $\mathsf X_p^\delta$ for some $p \in [1, +\infty]$;
\item $\Sigma_\delta(L, \mathcal A)$ is exponentially stable in $\mathsf X_p^\delta$ for every $L \in V_+(\Lambda)$ and $p \in [1, +\infty]$.
\end{enumerate}
Moreover, for every $p \in [1, +\infty]$,
\begin{equation*}
\lambda_p(\Lambda, \mathcal A) = \inf\{\nu \in \mathbb R \;|\: \mu(\Lambda, \mathfrak B_{-\nu}) < 1\},
\end{equation*}
where $\mathfrak B_{-\nu} = \{(e^{-\nu \Lambda_1} B_1, \dotsc, e^{-\nu \Lambda_N} B_N) \;|\: (B_1, \dotsc, B_N) \in \mathfrak B\}$.
\end{corollary}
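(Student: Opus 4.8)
The plan is to read off all three equivalences and the final formula directly from Theorem \ref{TheoMuLyap}, Proposition \ref{PropLyapExpo}, and Remark \ref{RemkTransformation}; essentially no new computation is needed, only careful bookkeeping of signs and the observation that $\Lambda \in V_+(\Lambda)$ (which holds trivially since $Z(\Lambda) \subset Z(\Lambda)$).

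For the equivalences I would close the cycle (i) $\Rightarrow$ (iii) $\Rightarrow$ (ii) $\Rightarrow$ (i). Assume (i), i.e. $\mu(\Lambda, \mathfrak B) < 1$. Then the relevant constant in Theorem \ref{TheoMuLyap}\ref{MuLyap1} is $m_1 = \min_{j} \Lambda_j/L_j > 0$, so for every $L \in V_+(\Lambda)$ and every $p$ one gets $\lambda_p(L, \mathcal A) \leq m_1 \ln \mu(\Lambda, \mathfrak B) < 0$, and Proposition \ref{PropLyapExpo} upgrades this to exponential stability, which is (iii). The implication (iii) $\Rightarrow$ (ii) is immediate, taking $L = \Lambda \in V_+(\Lambda)$. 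Finally, assuming (ii), Proposition \ref{PropLyapExpo} gives $\lambda_p(\Lambda, \mathcal A) < 0$ for some $p$, while Theorem \ref{TheoMuLyap}\ref{MuLyap3} identifies $\lambda_p(\Lambda, \mathcal A) = \ln \mu(\Lambda, \mathfrak B)$; hence $\mu(\Lambda, \mathfrak B) < 1$, which is (i).

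For the Lyapunov exponent formula, the key point is that the exponential scaling of the matrices interacts cleanly with $\mu$. Applying Remark \ref{RemkTransformation} with $L = \Lambda$ and $\mu = -\nu$ (so that the scaling factors $e^{\mu \Lambda_j}$ become exactly the $e^{-\nu \Lambda_j}$ defining $\mathfrak B_{-\nu}$) gives $\lambda_p(\Lambda, L^\infty(\mathbb R, \mathfrak B_{-\nu})) = \lambda_p(\Lambda, \mathcal A) - \nu$. Combining this with Theorem \ref{TheoMuLyap}\ref{MuLyap3} applied to both $\mathfrak B$ and $\mathfrak B_{-\nu}$ yields $\ln \mu(\Lambda, \mathfrak B_{-\nu}) = \ln \mu(\Lambda, \mathfrak B) - \nu$, that is, $\mu(\Lambda, \mathfrak B_{-\nu}) = e^{-\nu} \mu(\Lambda, \mathfrak B)$. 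Consequently $\mu(\Lambda, \mathfrak B_{-\nu}) < 1$ holds precisely when $\nu > \ln \mu(\Lambda, \mathfrak B) = \lambda_p(\Lambda, \mathcal A)$, so the admissible set is $(\lambda_p(\Lambda, \mathcal A), +\infty)$ and its infimum is $\lambda_p(\Lambda, \mathcal A)$.

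There is no genuine obstacle here, as the statement is a repackaging of Theorem \ref{TheoMuLyap}. The only points demanding minimal care are the sign convention in the scaling (matching $\mathfrak B_{-\nu}$ to the choice $\mu = -\nu$ with delay $L = \Lambda$) and the fact that the final formula is valid for every $p \in [1, +\infty]$ precisely because $\lambda_p(\Lambda, \mathcal A) = \ln \mu(\Lambda, \mathfrak B)$ is independent of $p$.
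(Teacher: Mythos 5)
Your proposal is correct and follows exactly the route the paper intends: the paper states this corollary as an immediate consequence of Theorem \ref{TheoMuLyap} and Remark \ref{RemkTransformation} (with Proposition \ref{PropLyapExpo} implicitly linking the sign of $\lambda_p$ to exponential stability), and your cycle of implications together with the scaling identity $\mu(\Lambda, \mathfrak B_{-\nu}) = e^{-\nu}\mu(\Lambda, \mathfrak B)$ is precisely that deduction.
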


Corollary \ref{CoroSilkRho0} is reminiscent of the well-known characterization of stability in the autonomous case proved by Hale and Silkowski when $\Lambda$ has rationally independent components (see \cite[Theorem 5.2]{Avellar1980Zeros}) and in a more general setting by Michiels \emph{et al.} in \cite{Michiels2009Strong}. In such a characterization, $(1, \dotsc, 1)$ is assumed to be in $V(\Lambda)$ and $\mu(\Lambda, \mathfrak B)$ is replaced in the statement of Corollary \ref{CoroSilkRho0} by
\begin{equation*}
\rho_{\mathrm{HS}}(\Lambda, A) = \max_{(\theta_1, \dotsc, \theta_N) \in \widetilde V(\Lambda)} \spr\left(\sum_{j=1}^N A_j e^{i \theta_j}\right),
\end{equation*}
where $\widetilde V(\Lambda)$ is the image of $V(\Lambda)$ by the canonical projection from $\mathbb R^N$ onto the torus $(\mathbb R / 2 \pi \mathbb Z)^N$. (Notice that $\widetilde V(\Lambda)$ is compact since the matrix $B$ characterizing $V(\Lambda)$ in Proposition \ref{PropRatDep} has integer coefficients.)

We propose below a generalization of $\rho_{\mathrm{HS}}(\Lambda, A)$ to the non-autonomous case defined as follows.
\begin{definition}
For $\Lambda \in (\mathbb R_+^\ast)^N$, $\mathfrak B$ a nonempty bounded subset of $\mathcal M_d(\mathbb C)^N$, and $\mathcal L(\Lambda)$ given by \eqref{LLambda}, we set
\begin{equation*}
\mu_{\mathrm{HS}}(\Lambda, \mathfrak B) = \limsup_{n \to +\infty} \sup_{(\theta_1, \dotsc, \theta_N) \in \widetilde V(\Lambda)} \sup_{\substack{B^r \in \mathfrak B \\ \text{for } r \in \mathcal L_{n \Lambda_{\max}}(\Lambda)}} \abs{\sum_{v \in \llbracket 1, N\rrbracket^n} \prod_{k=1}^n B_{v_k}^{\Lambda \cdot \mathbf p_v(k)} e^{i \theta_{v_k}}}^{\frac{1}{n}}.
\end{equation*}
\end{definition}

Let us check in the next proposition that $\mu_{\mathrm{HS}}$ actually extends $\rho_{\mathrm{HS}}$.

\begin{proposition}
\label{PropMuRho}
Let $A = (A_1, \dotsc, A_N) \in \mathcal M_d(\mathbb C)^N$ and $\mathfrak B = \{A\}$. Then one has $\mu_{\mathrm{HS}}(\Lambda, \mathfrak B) = \rho_{\mathrm{HS}}(\Lambda, A)$.
\end{proposition}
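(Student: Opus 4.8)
The plan is to collapse the definition of $\mu_{\mathrm{HS}}$ to a supremum of matrix powers and then invoke a uniform version of Gelfand's formula over the compact parameter set $\widetilde V(\Lambda)$. When $\mathfrak B = \{A\}$, the inner supremum over the choices $B^r \in \mathfrak B$ is vacuous and each factor $B_{v_k}^{\Lambda \cdot \mathbf p_v(k)}$ equals $A_{v_k}$, independently of its superscript. Writing $M(\theta) = \sum_{j=1}^N A_j e^{i\theta_j}$ for $\theta = (\theta_1, \dotsc, \theta_N)$, the distributive law applied to ordered products yields
\[
\sum_{v \in \llbracket 1, N\rrbracket^n} \prod_{k=1}^n A_{v_k} e^{i\theta_{v_k}} = \left(\sum_{j=1}^N A_j e^{i\theta_j}\right)^n = M(\theta)^n,
\]
so that $\mu_{\mathrm{HS}}(\Lambda, \{A\}) = \limsup_{n \to +\infty} \sup_{\theta \in \widetilde V(\Lambda)} \abs{M(\theta)^n}^{1/n}$. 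It then remains to identify this quantity with $\rho_{\mathrm{HS}}(\Lambda, A) = \max_{\theta \in \widetilde V(\Lambda)} \spr(M(\theta))$.

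The lower bound is immediate. Since $\abs{M(\theta)^n} \geq \spr(M(\theta)^n) = \spr(M(\theta))^n$ for every $n$ and every $\theta$, taking $n$-th roots and then the supremum gives $\sup_\theta \abs{M(\theta)^n}^{1/n} \geq \max_\theta \spr(M(\theta)) = \rho_{\mathrm{HS}}(\Lambda, A)$, where the maximum is attained because $\widetilde V(\Lambda)$ is compact and $\theta \mapsto \spr(M(\theta))$ is continuous. Passing to the $\limsup$ yields $\mu_{\mathrm{HS}}(\Lambda, \{A\}) \geq \rho_{\mathrm{HS}}(\Lambda, A)$.

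For the reverse inequality I would establish a uniform Gelfand estimate. Since $\mu_{\mathrm{HS}}$ is insensitive to the choice of norm, assume without loss of generality that $\abs{\cdot}$ is sub-multiplicative. Fix $\varepsilon > 0$ and set $\bar\rho = \rho_{\mathrm{HS}}(\Lambda, A)$. For each $\theta_0 \in \widetilde V(\Lambda)$, Gelfand's formula gives $\abs{M(\theta_0)^n}^{1/n} \to \spr(M(\theta_0)) \leq \bar\rho$, so there is an exponent $n_{\theta_0} \geq 1$ with $\abs{M(\theta_0)^{n_{\theta_0}}} < (\bar\rho + \varepsilon)^{n_{\theta_0}}$; by continuity of $\theta \mapsto \abs{M(\theta)^{n_{\theta_0}}}$ this strict inequality persists on an open neighborhood of $\theta_0$. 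Extracting a finite subcover of $\widetilde V(\Lambda)$ produces finitely many exponents $n_1, \dotsc, n_m$, all bounded by some $N^\ast$. For a fixed $\theta$ lying in the patch of index $i$, I write $n = q\,n_i + s$ with $0 \leq s < n_i \leq N^\ast$ and bound $\abs{M(\theta)^n} \leq \abs{M(\theta)^{n_i}}^q \abs{M(\theta)}^s \leq (\bar\rho + \varepsilon)^{q n_i}\, \bigl(\max_\theta \abs{M(\theta)}\bigr)^{N^\ast}$; since $q\,n_i \geq n - N^\ast$, this gives a constant $C$ independent of $\theta$ and $n$ with $\abs{M(\theta)^n} \leq C (\bar\rho + \varepsilon)^n$ uniformly in $\theta \in \widetilde V(\Lambda)$. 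Taking $n$-th roots and letting $n \to +\infty$ yields $\mu_{\mathrm{HS}}(\Lambda, \{A\}) \leq \bar\rho + \varepsilon$, and $\varepsilon \to 0$ closes the argument.

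The main obstacle is precisely this uniform Gelfand step. The pointwise convergence $\abs{M(\theta)^n}^{1/n} \to \spr(M(\theta))$ is far from uniform in $\theta$, as its rate degenerates near parameters where $M(\theta)$ fails to be semisimple or where eigenvalue moduli cluster, so one cannot naively interchange the $\limsup$ and the supremum. What rescues the argument is the compactness of $\widetilde V(\Lambda)$, which is guaranteed by the integrality of the matrix $B$ produced in Proposition \ref{PropRatDep}; this is the essential ingredient permitting the covering argument and the passage from the individual spectral radii to a single uniform exponential bound.
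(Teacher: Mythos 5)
Your argument is correct, and its algebraic first step is exactly the paper's: with $\mathfrak B = \{A\}$ the superscripts on the factors $B^{\Lambda \cdot \mathbf p_v(k)}_{v_k}$ are immaterial and the word sum collapses to $M(\theta)^n = \bigl(\sum_{j=1}^N A_j e^{i\theta_j}\bigr)^n$. Where you genuinely diverge is the analytic step, the interchange of $\limsup_n$ and $\sup_\theta$. The paper dispatches it in one stroke by citing the uniformity of the Gelfand limit on bounded subsets of $\mathcal M_d(\mathbb C)$ (Green's thesis, Proposition 3.3.5), which justifies $\max_\theta \lim_n = \lim_n \sup_\theta$ directly. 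You instead prove from scratch the only half of that uniformity the statement needs: the termwise inequality $\abs{M(\theta)^n}^{1/n} \geq \spr(M(\theta))$ gives the lower bound for free, while your covering argument (pointwise Gelfand at each $\theta_0$, persistence of the strict inequality by continuity of $\theta \mapsto \abs{M(\theta)^{n_{\theta_0}}}$, a finite subcover of the compact set $\widetilde V(\Lambda)$, then Euclidean division and submultiplicativity) yields $\abs{M(\theta)^n} \leq C(\bar\rho + \varepsilon)^n$ uniformly in $\theta$, which is all the upper bound requires. This is the standard upper-semicontinuity device from joint-spectral-radius theory: it buys a self-contained, elementary proof, where the paper's citation buys a two-line proof (and, incidentally, the cited uniformity needs only boundedness of $\{M(\theta) \;|\: \theta \in \widetilde V(\Lambda)\}$, not compactness of the parameter set, for the interchange).

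One factual caveat, though it does not affect the validity of your proof: your closing claim that the pointwise convergence $\abs{M(\theta)^n}^{1/n} \to \spr(M(\theta))$ is ``far from uniform in $\theta$'' is false in finite dimensions. Uniform convergence on bounded subsets of $\mathcal M_d(\mathbb C)$ does hold --- by Schur triangularization one gets, for $\abs{M} \leq R$ and a submultiplicative norm, $\abs{M^n} \leq \sum_{k=0}^{d-1} \binom{n}{k} \spr(M)^{n-k} C_R^{\,k}$, which forces uniform convergence from above, the bound from below being trivial --- and this is precisely the result the paper invokes from Green's thesis. Since you never rely on uniformity you have not established, the misstatement is harmless to the argument, but it should be deleted or corrected.
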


\begin{proof}
One has
\[
\begin{split}
\max_{(\theta_1, \dotsc, \theta_N) \in \widetilde V(\Lambda)} \spr\left(\sum_{j=1}^N A_j e^{i \theta_j}\right) & = \max_{(\theta_1, \dotsc, \theta_N) \in \widetilde V(\Lambda)} \lim_{n \to +\infty} \abs{\left(\sum_{j=1}^N A_j e^{i \theta_j}\right)^n}^{\frac{1}{n}} \\
 & = \lim_{n \to +\infty} \sup_{(\theta_1, \dotsc, \theta_N) \in \widetilde V(\Lambda)} \abs{\left(\sum_{j=1}^N A_j e^{i \theta_j}\right)^n}^{\frac{1}{n}} \\
 & = \lim_{n \to +\infty} \sup_{(\theta_1, \dotsc, \theta_N) \in \widetilde V(\Lambda)} \abs{\sum_{v \in \llbracket 1, N\rrbracket^n} \prod_{k=1}^n A_{v_k} e^{i \theta_{v_k}}}^{\frac{1}{n}},
\end{split}
\]
where the second equality is obtained as consequence of the uniformity of the Gelfand limit on bounded subsets of $\mathcal M_d(\mathbb C)$ (see, for instance, \cite[Proposition 3.3.5]{Green1996Uniform}).
\end{proof}

In the sequel, we relate $\mu_{\mathrm{HS}}(\Lambda, \mathfrak B)$ to a modified version of the expression \eqref{LambdaLyapXi} of $\lambda_p(L, \mathcal A)$.

\begin{definition}
For $L \in V_+(\Lambda)$ and $\mathcal A$ a set of functions $A: \mathbb R \to \mathcal M_d(\mathbb C)^N$, we define
\[\lambda_{\mathrm{HS}}(L, \mathcal A) = \limsup_{\substack{\abs{\mathbf n}_1 \to +\infty \\ \mathbf n \in \mathbb N^N}} \sup_{A \in \mathcal A} \esssup_{t \in \mathbb R} \frac{\ln\abs{\widehat\Xi_{[\mathbf n], t}^{L, \Lambda, A}}}{\abs{\mathbf n}_1}.\]
\end{definition}

\begin{remark}
\label{RemkLyapHS}
Since $L_{\min} \abs{\mathbf n}_1 \leq L \cdot \mathbf n \leq L_{\max} \abs{\mathbf n}_1$ for every $L \in V_+(\Lambda)$ and $\mathbf n \in \mathbb N^N$, it follows immediately from \eqref{LambdaLyapXi} that, for every $p \in [1, +\infty]$,
\[
\begin{aligned}
L_{\min} \lambda_p(L, \mathcal A) \leq \lambda_{\mathrm{HS}}(L, \mathcal A) \leq L_{\max} \lambda_p(L, \mathcal A), & \qquad \text{ if } \lambda_p(L, \mathcal A) \geq 0, \\
L_{\max} \lambda_p(L, \mathcal A) \leq \lambda_{\mathrm{HS}}(L, \mathcal A) \leq L_{\min} \lambda_p(L, \mathcal A), & \qquad \text{ if } \lambda_p(L, \mathcal A) < 0.
\end{aligned}
\]
In particular, the signs of $\lambda_{\mathrm{HS}}(L, \mathcal A)$ and $\lambda_p(L, \mathcal A)$ being equal, they both characterize the exponential stability of $\Sigma_\delta(L, \mathcal A)$.
\end{remark}

\begin{theorem}
\label{TheoMuHSLyap}
Let $\Lambda \in (\mathbb R_+^\ast)^N$, $\mathfrak B$ be a nonempty bounded subset of $\mathcal M_d(\mathbb C)^N$, and $\mathcal A = L^\infty(\mathbb R, \mathfrak B)$. Set $m = \inf \left\{1, \frac{\abs{z_+}_1}{\abs{z_-}_1} \;\middle|\: z \in Z(\Lambda) \setminus \{0\}\right\}$ if $\mu_{\mathrm{HS}}(\Lambda, \mathfrak B) < 1$ and $m = \sup \left\{1, \frac{\abs{z_+}_1}{\abs{z_-}_1} \;\middle|\: z \in Z(\Lambda) \setminus \{0\}\right\}$ if $\mu_{\mathrm{HS}}(\Lambda, \mathfrak B) \geq 1$. Then the following assertions hold:
\begin{enumerate}[label={\bf \roman*.}, ref={(\roman*)}]
\item\label{MuHSLyap1} for every $L \in V_+(\Lambda)$, $\lambda_{\mathrm{HS}}(L, \mathcal A) \leq m \ln\mu_{\mathrm{HS}}(\Lambda, \mathfrak B)$;
\item\label{MuHSLyap3} if $(1, \dotsc, 1) \in V(\Lambda)$ and $L \in W_+(\Lambda)$, one has $\lambda_{\mathrm{HS}}(L, \mathcal A) = \ln\mu_{\mathrm{HS}}(\Lambda, \mathfrak B)$.
\end{enumerate}
\end{theorem}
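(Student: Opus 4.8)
The plan is to compare the explicit expression \eqref{EqExplicitXiHat} for $\widehat\Xi_{[\mathbf n],t}^{L,\Lambda,A}$ with the phase-weighted sums defining $\mu_{\mathrm{HS}}(\Lambda,\mathfrak B)$, the bridge between the two being harmonic analysis on the compact abelian group $\widetilde V(\Lambda)$. As in the proof of Theorem \ref{TheoMuLyap}, I would introduce $\varphi_L:\mathcal L(\Lambda)\to\mathcal L(L)$, $\varphi_L(\Lambda\cdot\mathbf m)=L\cdot\mathbf m$ (well defined since $L\in V_+(\Lambda)$), and, for a fixed $A\in\mathcal A$ and a.e.\ $t$, set $B^r=A(t-\varphi_L(r))\in\mathfrak B$, so that $A_{v_k}(t-L\cdot\mathbf p_v(k))=B_{v_k}^{\Lambda\cdot\mathbf p_v(k)}$. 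Writing $G_n(\theta,B)=\sum_{v\in\llbracket1,N\rrbracket^n}\prod_{k=1}^nB_{v_k}^{\Lambda\cdot\mathbf p_v(k)}e^{i\theta_{v_k}}$ for the quantity inside $\mu_{\mathrm{HS}}$, grouping paths by their endpoint $\mathbf m=\mathbf p_v(n+1)$ and using that a path to $\mathbf m$ visits coordinate $j$ exactly $m_j$ times (so $\prod_{k=1}^ne^{i\theta_{v_k}}=e^{i\theta\cdot\mathbf m}$) gives
\begin{equation*}
G_n(\theta,B)=\sum_{\substack{\mathbf m\in\mathbb N^N\\\abs{\mathbf m}_1=n}}e^{i\theta\cdot\mathbf m}\sum_{v\in V_{\mathbf m}}\prod_{k=1}^nB_{v_k}^{\Lambda\cdot\mathbf p_v(k)}.
\end{equation*}

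For \ref{MuHSLyap1} I would first note that, since $V(\Lambda)=\range B$ is a rational subspace (Proposition \ref{PropRatDep}), the characters $\theta\mapsto e^{i\theta\cdot\mathbf m}$ of $\widetilde V(\Lambda)$ have annihilator exactly $Z(\Lambda)$: the relation $e^{i\theta\cdot\mathbf k}=1$ for all $\theta\in\widetilde V(\Lambda)$ forces $\theta\cdot\mathbf k=0$ for all $\theta$ in the \emph{subspace} $V(\Lambda)$, i.e.\ $\mathbf k\in V(\Lambda)^\perp=Z(\Lambda)$. Thus $e^{i\theta\cdot\mathbf m}$ is constant on each $\approx$-class and distinct classes yield distinct, hence orthonormal, characters on $\widetilde V(\Lambda)$. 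Integrating $G_{n'}(\cdot,B)$ against the conjugate character of $[\mathbf n]$ with respect to normalized Haar measure therefore extracts the class-restricted partial sum $H_{n'}:=\sum_{\mathbf n'\in[\mathbf n]\cap\mathbb N^N,\,\abs{\mathbf n'}_1=n'}\sum_{v\in V_{\mathbf n'}}\prod_kB_{v_k}^{\Lambda\cdot\mathbf p_v(k)}$, whence $\abs{H_{n'}}\le\sup_{\theta}\abs{G_{n'}(\theta,B)}\le C(\mu_{\mathrm{HS}}(\Lambda,\mathfrak B)+\varepsilon)^{n'}$ for every $\varepsilon>0$, by definition of $\mu_{\mathrm{HS}}$ as a limsup. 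Since $\widehat\Xi_{[\mathbf n],t}^{L,\Lambda,A}=\sum_{n'}H_{n'}$, I am reduced to controlling the exponents $n'=\abs{\mathbf n'}_1$. For $\mathbf n'\in[\mathbf n]\cap\mathbb N^N$ put $z=\mathbf n-\mathbf n'\in Z(\Lambda)$; then $\abs{\mathbf n'}_1=\abs{\mathbf n}_1-\abs{z_+}_1+\abs{z_-}_1$, with $z_+\le\mathbf n$ and $z_-\le\mathbf n'$ componentwise. As $-z\in Z(\Lambda)$ too, the defining set of $m$ is invariant under $z\mapsto-z$, so $\abs{z_+}_1/\abs{z_-}_1\in[m,1/m]$ if $\mu_{\mathrm{HS}}<1$ (and $\in[1/m,m]$ if $\mu_{\mathrm{HS}}\ge1$); combining $\abs{z_+}_1\le\abs{z_-}_1/m$ with $\abs{z_-}_1\le\abs{\mathbf n'}_1$ gives $\abs{\mathbf n'}_1\ge m\abs{\mathbf n}_1$ in the first case, and the symmetric computation gives $\abs{\mathbf n'}_1\le m\abs{\mathbf n}_1$ in the second. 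In either case the geometric sum over the $O(\abs{\mathbf n}_1)$ admissible $n'$ is dominated by $(\mu_{\mathrm{HS}}+\varepsilon)^{m\abs{\mathbf n}_1}$ (the smallest exponent when the base is $<1$, the largest when $\ge1$); taking $\tfrac1{\abs{\mathbf n}_1}\ln(\cdot)$, letting $\abs{\mathbf n}_1\to+\infty$ to kill the polynomial factor, and then $\varepsilon\to0$, yields $\lambda_{\mathrm{HS}}(L,\mathcal A)\le m\ln\mu_{\mathrm{HS}}(\Lambda,\mathfrak B)$.

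For \ref{MuHSLyap3}, the assumption $(1,\dots,1)\in V(\Lambda)$ means $\sum_iz_i=0$ for all $z\in Z(\Lambda)$, i.e.\ $\abs{z_+}_1=\abs{z_-}_1$; hence $m=1$ and \ref{MuHSLyap1} already gives ``$\le$''. For the reverse inequality I would use $L\in W_+(\Lambda)$ to reduce to $L=\Lambda$ via Remark \ref{RemkLambdaPrime} (so $\widehat\Xi^{L,\Lambda,A}=\widehat\Xi^{L,L,A}$), noting that now every representative of a class has the same $\ell^1$-norm, so $\widehat\Xi_{[\mathbf m],t}^{\Lambda,\Lambda,A}$ equals the full class-sum $H_{\abs{\mathbf m}_1}$. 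Given $\theta\in\widetilde V(\Lambda)$ and a choice $(B^r)_{r\in\mathcal L_{n\Lambda_{\max}}(\Lambda)}$ in $\mathfrak B$, I construct $A\in\mathcal A$ realizing $A_{v_k}(t-\Lambda\cdot\mathbf p_v(k))=B_{v_k}^{\Lambda\cdot\mathbf p_v(k)}$ on a neighbourhood of $t=0$ exactly as in the lower-bound construction of Theorem \ref{TheoMuLyap}; then $G_n(\theta,B)=\sum_{[\mathbf m]:\,\abs{\mathbf m}_1=n}e^{i\theta\cdot\mathbf m}\widehat\Xi_{[\mathbf m],t}^{\Lambda,\Lambda,A}$. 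Since the number of $\approx$-classes with $\ell^1$-norm $n$ is $O(n^{N-1})$, the triangle inequality gives $\max_{[\mathbf m]:\,\abs{\mathbf m}_1=n}\abs{\widehat\Xi_{[\mathbf m],t}^{\Lambda,\Lambda,A}}\ge C^{-1}n^{-(N-1)}\sup_\theta\abs{G_n(\theta,B)}$ for every admissible $B$; passing to $\sup_\theta\sup_B$, then to $\tfrac1n\ln(\cdot)$ and $n\to+\infty$, the polynomial factor again disappears and $\lambda_{\mathrm{HS}}(L,\mathcal A)=\lambda_{\mathrm{HS}}(\Lambda,\mathcal A)\ge\ln\mu_{\mathrm{HS}}(\Lambda,\mathfrak B)$.

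The main obstacle is the Fourier inversion on $\widetilde V(\Lambda)$: one must verify that the characters $e^{i\theta\cdot\mathbf m}$ are genuinely orthonormal on $\widetilde V(\Lambda)$, which rests on the annihilator being exactly the integer lattice $Z(\Lambda)$ — a consequence of the rationality of $V(\Lambda)$ furnished by Proposition \ref{PropRatDep} — and one must track carefully, through $B^r=A(t-\varphi_L(r))$, that it is the labels $\Lambda\cdot\mathbf p_v(k)$ (not $L\cdot\mathbf p_v(k)$) that govern which matrices coincide, so that the $L$-dynamics and the $\Lambda$-indexed sums remain compatible. The remaining ingredient, the elementary lemma $m\abs{\mathbf n}_1\le\abs{\mathbf n'}_1\le \tfrac1m\abs{\mathbf n}_1$ for $\mathbf n'\in[\mathbf n]$, is what pins down the precise constant $m$.
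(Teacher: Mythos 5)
Your proposal tracks the paper's own strategy in all essential respects: the substitution $B^r=A(t-\varphi_L(r))$ so that the $\Lambda$-indexed labels $\Lambda\cdot\mathbf p_v(k)$ govern which matrices coincide, the extraction of $\approx$-class partial sums from the phase-weighted sums $G_n(\theta,B)$, the comparison of $\abs{\mathbf n'}_1$ with $\abs{\mathbf n}_1$ for class representatives (via the decomposition $\mathbf n=\mathbf c+z_+$, $\mathbf n'=\mathbf c+z_-$, $\mathbf c\in\mathbb N^N$), the geometric summation over the $O(\abs{\mathbf n}_1)$ admissible exponents, and, for the lower bound, the piecewise-constant switching signal realizing prescribed $B^r$ near $t=0$ together with the count $\#\{[\mathbf m]\;|\:\abs{[\mathbf m]}_1=n\}\le(n+1)^{N-1}$. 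The one genuine methodological difference is the Fourier step: you extract the class sums by integrating against characters with respect to normalized Haar measure on the compact group $\widetilde V(\Lambda)$, justifying orthonormality by showing that the annihilator of $\widetilde V(\Lambda)$ in $\mathbb Z^N$ is exactly $Z(\Lambda)$ (this is correct, and it does rest on the rationality of $V(\Lambda)$ from Proposition \ref{PropRatDep}); the paper instead parametrizes $V(\Lambda)$ as $\nu\mapsto BM\nu$, $\nu\in\mathbb R^h$, and uses Bohr-type means $\lim_{R\to\infty}(2R)^{-h}\int_{[-R,R]^h}$ with the explicit $\abs{\sin(\xi R)/(\xi R)}\to 0$ computation (its formula \eqref{FourierInverse}). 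The two devices do the same job; yours is cleaner if character orthogonality on subtori is taken as known, the paper's is more elementary and self-contained.

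There is, however, one under-justified step in your part \ref{MuHSLyap3}: the reduction of the case $L\in W_+(\Lambda)$ to $L=\Lambda$. Remark \ref{RemkLambdaPrime} only yields $\widehat\Xi^{L,\Lambda,A}_{[\mathbf n],t}=\widehat\Xi^{L,L,A}_{[\mathbf n],t}$, i.e., it lets you change the second superscript (the reference rational structure), not the first (the actual delays): $\widehat\Xi^{L,\Lambda,A}$ evaluates $A$ at the times $t-L\cdot\mathbf p_v(k)$, whereas $\widehat\Xi^{\Lambda,\Lambda,A}$ evaluates it at $t-\Lambda\cdot\mathbf p_v(k)$, so these are different functionals of $A$ and the asserted equality $\lambda_{\mathrm{HS}}(L,\mathcal A)=\lambda_{\mathrm{HS}}(\Lambda,\mathcal A)$ does not follow from the remark. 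The equality is true and can be obtained with tools you already have, in either of two ways. First, the paper's route: having proved $\lambda_{\mathrm{HS}}(\Lambda,\mathcal A)=\ln\mu_{\mathrm{HS}}(\Lambda,\mathfrak B)$, combine it with part \ref{MuHSLyap1} (where $m=1$) to get $\lambda_{\mathrm{HS}}(L,\mathcal A)\le\lambda_{\mathrm{HS}}(\Lambda,\mathcal A)$ for every $L\in V_+(\Lambda)$, and then exchange the roles of $L$ and $\Lambda$ (legitimate when $L\in W_+(\Lambda)$, since then $\Lambda\in V_+(L)$ and all hypotheses are symmetric) to obtain the reverse inequality. Second, more directly in your framework: run your lower-bound construction with delays $L$ rather than $\Lambda$; for $L\in W_+(\Lambda)$ the map $\varphi_L$ is injective, so the intervals around the points $-\varphi_L(r)$, $r\in\mathcal L_{n\Lambda_{\max}}(\Lambda)$, can be chosen pairwise disjoint, the piecewise-constant $A$ realizing $A_{v_k}(t-L\cdot\mathbf p_v(k))=B^{\Lambda\cdot\mathbf p_v(k)}_{v_k}$ is well defined, and you conclude $\lambda_{\mathrm{HS}}(L,\mathcal A)\ge\ln\mu_{\mathrm{HS}}(\Lambda,\mathfrak B)$ without any transfer. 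Either patch closes the gap; the rest of your argument is correct.
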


\begin{proof}
We start by proving \ref{MuHSLyap1}. It is enough to show that, for every $\varepsilon > 0$ small enough, there exists $C > 0$ such that, for every $A \in \mathcal A$, $\mathbf n \in \mathbb N^N \setminus \{0\}$, and $t \in \mathbb R$, we have
\[
\abs{\widehat\Xi_{[\mathbf n], t}^{L, \Lambda, A}} \leq C (1 + \abs{\mathbf n}_1) \left(\mu_{\mathrm{HS}}(\Lambda, \mathfrak B) + \varepsilon\right)^{m \abs{\mathbf n}_1}.
\]

Let $L \in V_+(\Lambda)$ and $\varepsilon > 0$ be such that $\mu_{\mathrm{HS}}(\Lambda, \mathfrak B) + \varepsilon < 1$ if $\mu_{\mathrm{HS}}(\Lambda, \mathfrak B) < 1$. We can proceed as in the proof of Theorem \ref{TheoMuLyap} to obtain a finite constant $C_0 > 0$ such that, for every $n \in \mathbb N^\ast$,
\begin{equation}
\label{BoundSupRho1}
\sup_{(\theta_1, \dotsc, \theta_N) \in \widetilde V(\Lambda)} \sup_{\substack{B^r \in \mathfrak B \\ \text{for } r \in \mathcal L_{n\Lambda_{\max}}(\Lambda)}} \abs{\sum_{v \in \llbracket 1, N\rrbracket^n} \prod_{k=1}^{n} B_{v_k}^{\Lambda \cdot \mathbf p_v(k)} e^{i \theta_{v_k}}} \leq C_0 \left(\mu_{\mathrm{HS}}(\Lambda, \mathfrak B) + \varepsilon\right)^{n}.
\end{equation}

Let $A \in \mathcal A$, $t \in \mathbb R$, and $\varphi_L$ be as in the proof of Theorem \ref{TheoMuLyap}. For $r \in \mathcal L_{n \Lambda_{\max}}(\Lambda)$, we set $B^r = A(t - \varphi_L(r))$, and similarly to the proof of Theorem \ref{TheoMuLyap}, \eqref{BandA} holds for every $v \in \llbracket 1, N\rrbracket^n$ and $k \in \llbracket 1, n \rrbracket$. Thus \eqref{BoundSupRho1} implies that, for every $n \in \mathbb N^\ast$ and $\theta \in \widetilde V(\Lambda)$,
\[
\abs{\sum_{v \in \llbracket 1, N\rrbracket^n} \prod_{k=1}^{n} A_{v_k}\left(t - L \cdot \mathbf p_v(k)\right) e^{i \theta_{v_k}}} \leq C_0 \left(\mu_{\mathrm{HS}}(\Lambda, \mathfrak B) + \varepsilon\right)^{n}.
\]
Since
\[
\begin{split}
\sum_{v \in \llbracket 1, N\rrbracket^n} \prod_{k=1}^{n} A_{v_k}\left(t - L \cdot \mathbf p_v(k)\right) e^{i \theta_{v_k}} & = \sum_{\substack{\mathbf n \in \mathbb N^N \\ \abs{\mathbf n}_{1} = n}} \sum_{v \in V_{\mathbf n}} \prod_{k=1}^{\abs{\mathbf n}_{1}} A_{v_k}\left(t - L \cdot \mathbf p_v(k)\right) e^{i \theta_{v_k}} \\
 & = \sum_{\substack{\mathbf n \in \mathbb N^N \\ \abs{\mathbf n}_{1} = n}} e^{i \mathbf n \cdot \theta} \sum_{v \in V_{\mathbf n}} \prod_{k=1}^{\abs{\mathbf n}_{1}} A_{v_k}\left(t - L \cdot \mathbf p_v(k)\right) \\
 & = \sum_{\substack{\mathbf n \in \mathbb N^N \\ \abs{\mathbf n}_{1} = n}} e^{i \mathbf n \cdot \theta} \Xi_{\mathbf n, t}^{L, A},
\end{split}
\]
we obtain that, for every $n \in \mathbb N^\ast$ and $\theta \in \widetilde V(\Lambda)$,
\begin{equation}
\label{ConvExpoA}
\abs{\sum_{\substack{\mathbf n \in \mathbb N^N \\ \abs{\mathbf n}_{1} = n}} e^{i \mathbf n \cdot \theta} \Xi_{\mathbf n, t}^{L, A}} \leq C_0 \left(\mu_{\mathrm{HS}}(\Lambda, \mathfrak B) + \varepsilon\right)^{n}.
\end{equation}

Following Proposition \ref{PropRatDep}, fix $h \in \llbracket 1, N\rrbracket$ and $B \in \mathcal M_{N, h}(\mathbb Z)$ with $\rank(B) = h$ such that $\Lambda = B \ell_0$ for $\ell_0 \in (\mathbb R_+^\ast)^h$ with rationally independent components. Let $M \in \mathcal M_h(\mathbb R)$ be an invertible matrix such that $\ell_0 = M e_1$, where $e_1$ is the first vector of the canonical basis of $\mathbb R^h$, in such a way that $\Lambda = B M e_1$. For $n \in \mathbb N$, we define the function $f_n: \mathbb R^h \to \mathcal M_d(\mathbb C)$ by
\[
f_n(\nu) = \sum_{\substack{\mathbf n \in \mathbb N^N \\ \abs{\mathbf n}_{1} = n}} e^{i \mathbf n \cdot B M \nu} \Xi_{\mathbf n, t}^{L, A}.
\]
We claim that, for every $\mathbf n_0 \in \mathbb N^N$,
\begin{equation}
\label{FourierInverse}
\lim_{R \to +\infty} \frac{1}{(2R)^h} \int_{[-R, R]^h} f_n(\nu) e^{-i \mathbf n_0 \cdot B M \nu} d\nu = \sum_{\substack{\mathbf n \in [\mathbf n_0] \cap \mathbb N^N \\ \abs{\mathbf n}_{1} = n}} \Xi_{\mathbf n, t}^{L, A}.
\end{equation}
Indeed, we have
\[
\frac{1}{(2R)^h} \int_{[-R, R]^h} f_n(\nu) e^{-i \mathbf n_0 \cdot B M \nu} d\nu = \sum_{\substack{\mathbf n \in \mathbb N^N \\ \abs{\mathbf n}_{1} = n}} \Xi_{\mathbf n, t}^{L, A} \frac{1}{(2R)^h} \int_{[-R, R]^h} e^{i (\mathbf n - \mathbf n_0) \cdot B M \nu} d\nu.
\]
If $\mathbf n \in \mathbb N^N$ is such that $\Lambda \cdot \mathbf n = \Lambda \cdot \mathbf n_0$, then $\Lambda \cdot (\mathbf n - \mathbf n_0) = 0$, and therefore $\mathbf n - \mathbf n_0 \in Z(\Lambda) \subset V(\Lambda)^{\perp} = (\range B)^\perp$. One gets $(\mathbf n - \mathbf n_0) \cdot B M \nu = 0$ for every $\nu \in \mathbb R^h$, implying that
\begin{equation*}
\frac{1}{(2R)^h} \int_{[-R, R]^h} e^{i (\mathbf n - \mathbf n_0) \cdot B M \nu} d\nu = 1.
\end{equation*}
If now $\Lambda \cdot \mathbf n \not = \Lambda \cdot \mathbf n_0$, set $\xi = \Lambda \cdot (\mathbf n - \mathbf n_0)$, which is nonzero. Then
\[
\abs{\frac{1}{(2R)^h} \int_{[-R, R]^h} e^{i (\mathbf n - \mathbf n_0) \cdot B M \nu} d\nu} \leq \frac{1}{2R} \abs{\int_{-R}^R e^{i \xi \nu_1} d\nu_1} = \abs{\frac{\sin(\xi R)}{\xi R}} \xrightarrow[R \to +\infty]{} 0,
\]
which gives \eqref{FourierInverse}.

We can now combine \eqref{ConvExpoA} and \eqref{FourierInverse} to obtain that, for every $n \in \mathbb N^\ast$ and $\mathbf n_0 \in \mathbb N^N \setminus \{0\}$,
\begin{equation*}
\abs{\sum_{\substack{\mathbf n \in [\mathbf n_0] \cap \mathbb N^N \\ \abs{\mathbf n}_{1} = n}} \Xi_{\mathbf n, t}^{L, A}} \leq C_0 (\mu_{\mathrm{HS}}(\Lambda, \mathfrak B) + \varepsilon)^n.
\end{equation*}
Set $m_0 = \sup \left\{1, \frac{\abs{z_+}_1}{\abs{z_-}_1}\;\middle|\: z \in Z(\Lambda) \setminus \{0\}\right\}$ and notice that, since $Z(\Lambda) \allowbreak= -Z(\Lambda)$, one has $\frac{1}{m_0} = \inf \left\{1, \frac{\abs{z_+}_1}{\abs{z_-}_1}\;\middle|\: z \in Z(\Lambda) \setminus \{0\}\right\}$. We claim that, if $\mathbf n, \mathbf n_0 \in \mathbb N^N$ and $\Lambda \cdot \mathbf n = \Lambda \cdot \mathbf n_0$, then $\frac{1}{m_0} \abs{\mathbf n_0}_1 \leq \abs{\mathbf n}_1 \leq m_0 \abs{\mathbf n_0}_1$. Indeed, let $z = \mathbf n - \mathbf n_0 \in Z(\Lambda)$ and $\mathbf n_1 = \mathbf n_0 - z_- \in \mathbb N^N$. Then one has
\[
\frac{\abs{\mathbf n}_1}{\abs{\mathbf n_0}_1} = \frac{\abs{z_+}_1 + \abs{\mathbf n_1}_1}{\abs{z_-}_1 + \abs{\mathbf n_1}_1} \in \left[\frac{1}{m_0},\; m_0\right].
\]
Hence, for every $\mathbf n_0 \in \mathbb N^N \setminus \{0\}$,
\begin{equation*}
\widehat\Xi_{[\mathbf n_0], t}^{L, \Lambda, A} = \sum_{n=0}^{+\infty} \sum_{\substack{\mathbf n \in [\mathbf n_0] \cap \mathbb N^N \\ \abs{\mathbf n}_{1} = n}} \Xi_{\mathbf n, t}^{L, A} = \sum_{n \in \left\llbracket\frac{\abs{\mathbf n_0}_1}{m_0},\; m_0 \abs{\mathbf n_0}_1 \right\rrbracket} \sum_{\substack{\mathbf n \in [\mathbf n_0] \cap \mathbb N^N \\ \abs{\mathbf n}_{1} = n}} \Xi_{\mathbf n, t}^{L, A},
\end{equation*}
and we conclude that
\[
\abs{\widehat\Xi_{[\mathbf n_0], t}^{L, \Lambda, A}} \leq \sum_{n \in \left\llbracket\frac{\abs{\mathbf n_0}_1}{m_0},\; m_0 \abs{\mathbf n_0}_1 \right\rrbracket} C_0 (\mu_{\mathrm{HS}}(\Lambda, \mathfrak B) + \varepsilon)^n \leq C (1 + \abs{\mathbf n_0}_1) (\mu_{\mathrm{HS}}(\Lambda, \mathfrak B) + \varepsilon)^{m \abs{\mathbf n_0}_1},
\]
for some $C > 0$. This concludes the proof of \ref{MuHSLyap1}.

Suppose now that $(1, \dotsc, 1) \in V(\Lambda)$. Then $\abs{z_+}_1 = \abs{z_-}_1$ for every $z \in Z(\Lambda)$, and hence \ref{MuHSLyap1} yields $\lambda_{\mathrm{HS}}(L, \mathcal A) \leq \ln\mu_{\mathrm{HS}}(\Lambda, \mathfrak B)$ for every $L \in V_+(\Lambda)$. We claim that it is enough to prove \ref{MuHSLyap3} only for $L = \Lambda$. Indeed, assume that $\lambda_{\mathrm{HS}}(\Lambda, \mathcal A) = \ln \mu_{\mathrm{HS}}(\Lambda, \mathfrak B)$. In particular,
\begin{equation}
\label{LambdaLLeqLambdaLambda}
\lambda_{\mathrm{HS}}(L, \mathcal A) \leq \lambda_{\mathrm{HS}}(\Lambda, \mathcal A)
\end{equation}
for every $L \in V_+(\Lambda)$. Since $\Lambda \in V_+(L)$ if $L \in W_+(\Lambda)$, by exchanging the role of $L$ and $\Lambda$ in \eqref{LambdaLLeqLambdaLambda}, we deduce that $\lambda_{\mathrm{HS}}(L, \mathcal A) = \lambda_{\mathrm{HS}}(\Lambda, \mathcal A)$ for every $L \in W_+(\Lambda)$, and hence \ref{MuHSLyap3}.

Let $n \in \mathbb N^\ast$ and $B^r \in \mathfrak B$ for $r \in \mathcal L_{n \Lambda_{\max}}(\Lambda)$. As in the argument for \ref{MuLyap3} in Theorem \ref{TheoMuLyap}, there exist $\zeta > 0$ and a function $A: \mathbb R \to \mathcal M_d(\mathbb C)^N$ such that, for every $v \in \llbracket 1, N\rrbracket^n$, $t \in (-\zeta, \zeta)$, and $k \in \llbracket 1, n\rrbracket$, we have
\[
A_{v_k}\left(t - \Lambda \cdot \mathbf p_v(k)\right) = B_{v_k}^{\Lambda \cdot \mathbf p_v(k)} \;\; \text{ and } \;\; \sum_{v \in \llbracket 1, N\rrbracket^n} \prod_{k=1}^{n} B_{v_k}^{\Lambda \cdot \mathbf p_v(k)} e^{i \theta_{v_k}} = \sum_{\substack{\mathbf n \in \mathbb N^N \\ \abs{\mathbf n}_{1} = n}} e^{i \mathbf n \cdot \theta} \Xi_{\mathbf n, t}^{\Lambda, A}.
\]

Denote $\mathcal Z_+ = \{[\mathbf n] \in \mathcal Z \;|\: [\mathbf n] \cap \mathbb N^N \not = \emptyset\}$. Since $(1, \dotsc, 1) \in V(\Lambda)$, one deduces that, if $\mathbf n, \mathbf n^\prime \in \mathbb N^N$ are such that $\mathbf n \approx \mathbf n^\prime$, then $e^{i \mathbf n \cdot \theta} = e^{i \mathbf n^\prime \cdot \theta}$ for every $\theta \in \widetilde V(\Lambda)$ and $\abs{\mathbf n}_1 = \abs{\mathbf n^\prime}_1$. We set $\abs{[\mathbf n]}_1 = \abs{\mathbf n}_1$ for every $\mathbf n \in \mathbb N^N$. Then
\[
\sum_{\substack{\mathbf n \in \mathbb N^N \\ \abs{\mathbf n}_{1} = n}} e^{i \mathbf n \cdot \theta} \Xi_{\mathbf n, t}^{\Lambda, A} = \sum_{\substack{[\mathbf n] \in \mathcal Z_+ \\ \abs{[\mathbf n]}_1 = n}} \sum_{\mathbf n^\prime \in [\mathbf n] \cap \mathbb N^N} e^{i \mathbf n^\prime \cdot \theta} \Xi_{\mathbf n^\prime, t}^{\Lambda, A} = \sum_{\substack{[\mathbf n] \in \mathcal Z_+ \\ \abs{[\mathbf n]}_1 = n}} e^{i \mathbf n \cdot \theta} \widehat\Xi_{[\mathbf n], t}^{\Lambda, \Lambda, A}.
\]
We clearly have $\# \{[\mathbf n] \in \mathcal Z_+ \;|\: \abs{[\mathbf n]}_1 = n\} \leq \# \{\mathbf n \in \mathbb N^N \;|\: \abs{\mathbf n}_{1} = n\} = \binom{n + N - 1}{N - 1} \leq (n+1)^{N-1}$, and we get that, for every $\theta \in \widetilde V(\Lambda)$ and $\mathbf n \in \mathbb N^N$ with $\abs{\mathbf n}_1 = n$,
\[
\abs{\sum_{v \in \llbracket 1, N\rrbracket^n} \prod_{k=1}^{n} B_{v_k}^{\Lambda \cdot \mathbf p_v(k)} e^{i \theta_{v_k}}}^{\frac{1}{n}} \leq (n + 1)^{\frac{N - 1}{n}} \sup_{A \in \mathcal A} \esssup_{t \in \mathbb R} \abs{\widehat\Xi_{[\mathbf n], t}^{\Lambda, \Lambda, A}}^{\frac{1}{n}}.
\]
Since the above inequality holds for every choice of $B^r \in \mathfrak B$, $r \in \mathcal L_{n \Lambda_{\max}}(\Lambda)$, $n \in \mathbb N^\ast$, we deduce that $\ln \mu_{\mathrm{HS}}(\Lambda, \mathfrak B) \leq \lambda_{\mathrm{HS}}(\Lambda, \mathcal A)$. This concludes the proof of Theorem \ref{TheoMuHSLyap}.
\end{proof}

The next corollary, which follows directly from the above theorem and Remarks \ref{RemkTransformation} and \ref{RemkLyapHS}, generalizes the stability criterion in \cite{Michiels2009Strong, Avellar1980Zeros} to the nonautonomous case (see Proposition \ref{PropMuRho}).

\begin{corollary}
\label{CoroSilkRhoHS}
Let $\Lambda \in (\mathbb R_+^\ast)^N$, $\mathfrak B$ be a nonempty bounded subset of $\mathcal M_d(\mathbb C)^N$, and $\mathcal A = L^\infty(\mathbb R, \mathfrak B)$. Consider the following statements:
\begin{enumerate}[label={\bf \roman*.}, ref={(\roman*)}]
\item\label{Silk1} $\mu_{\mathrm{HS}}(\Lambda, \mathfrak B) < 1$;
\item\label{Silk2} $\Sigma_\delta(\Lambda, \mathcal A)$ is exponentially stable in $\mathsf X_p^\delta$ for some $p \in [1, +\infty]$;
\item\label{Silk3} $\Sigma_\delta(L, \mathcal A)$ is exponentially stable in $\mathsf X_p^\delta$ for every $L \in V_+(\Lambda)$ and $p \in [1, +\infty]$.
\end{enumerate}
Then \ref{Silk1} $\implies$ \ref{Silk3} $\implies$ \ref{Silk2}. If moreover $(1, \dotsc, 1) \in V(\Lambda)$, we also have \ref{Silk2} $\implies$ \ref{Silk1} and, for every $p \in [1, +\infty]$,
\begin{equation*}
\lambda_p(\Lambda, \mathcal A) = \inf\{\nu \in \mathbb R \;|\: \mu_{\mathrm{HS}}(\Lambda, \mathfrak B_{-\nu}) < 1\},
\end{equation*}
where $\mathfrak B_{-\nu} = \{(e^{-\nu \Lambda_1} B_1, \dotsc, e^{-\nu \Lambda_N} B_N) \;|\: (B_1, \dotsc, B_N) \in \mathfrak B\}$.
\end{corollary}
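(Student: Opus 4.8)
The plan is to read the corollary as a direct transcription of Theorem \ref{TheoMuHSLyap} into the language of exponential stability, using Proposition \ref{PropLyapExpo} to pass between exponential stability and the sign of $\lambda_p$, Remark \ref{RemkLyapHS} to pass between the signs of $\lambda_p(L,\mathcal A)$ and $\lambda_{\mathrm{HS}}(L,\mathcal A)$, and Remark \ref{RemkTransformation} to handle the weighted family $\mathfrak B_{-\nu}$. Before anything else I would record one elementary but essential fact about the constant $m$ from Theorem \ref{TheoMuHSLyap}: for any $z \in Z(\Lambda)\setminus\{0\}$ one has $\Lambda\cdot z_+ = \Lambda\cdot z_-$, and since all components of $\Lambda$ are strictly positive this forces $z_+ \neq 0$, $z_- \neq 0$ and $\Lambda_{\min}\abs{z_+}_1 \le \Lambda\cdot z_+ = \Lambda\cdot z_- \le \Lambda_{\max}\abs{z_-}_1$ (and symmetrically). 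Hence every ratio $\abs{z_+}_1/\abs{z_-}_1$ lies in the interval $[\Lambda_{\min}/\Lambda_{\max},\,\Lambda_{\max}/\Lambda_{\min}]$, so $m$ is finite and strictly positive in both cases of the theorem; in particular $m>0$ when $\mu_{\mathrm{HS}}(\Lambda,\mathfrak B)<1$, which is exactly what is needed to preserve a strict inequality after multiplication by $m$.

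For \ref{Silk1} $\implies$ \ref{Silk3}, assuming $\mu_{\mathrm{HS}}(\Lambda,\mathfrak B)<1$ gives $\ln\mu_{\mathrm{HS}}(\Lambda,\mathfrak B)<0$, and since $m>0$, Theorem \ref{TheoMuHSLyap}\ref{MuHSLyap1} yields $\lambda_{\mathrm{HS}}(L,\mathcal A)\le m\ln\mu_{\mathrm{HS}}(\Lambda,\mathfrak B)<0$ for every $L\in V_+(\Lambda)$. Remark \ref{RemkLyapHS} then gives $\lambda_p(L,\mathcal A)<0$ for every $p\in[1,+\infty]$, and Proposition \ref{PropLyapExpo} converts this into exponential stability of $\Sigma_\delta(L,\mathcal A)$ in $\mathsf X_p^\delta$, which is \ref{Silk3}. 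The implication \ref{Silk3} $\implies$ \ref{Silk2} is immediate because $\Lambda\in V_+(\Lambda)$, so one takes $L=\Lambda$ and any fixed $p$.

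Now suppose $(1,\dotsc,1)\in V(\Lambda)$. For \ref{Silk2} $\implies$ \ref{Silk1}, exponential stability of $\Sigma_\delta(\Lambda,\mathcal A)$ gives $\lambda_p(\Lambda,\mathcal A)<0$ by Proposition \ref{PropLyapExpo}, hence $\lambda_{\mathrm{HS}}(\Lambda,\mathcal A)<0$ by Remark \ref{RemkLyapHS}; since $\Lambda\in W_+(\Lambda)$, Theorem \ref{TheoMuHSLyap}\ref{MuHSLyap3} identifies $\lambda_{\mathrm{HS}}(\Lambda,\mathcal A)=\ln\mu_{\mathrm{HS}}(\Lambda,\mathfrak B)$, so $\mu_{\mathrm{HS}}(\Lambda,\mathfrak B)<1$. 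Together with the first part this establishes the equivalence \ref{Silk1} $\iff$ \ref{Silk2} whenever $(1,\dotsc,1)\in V(\Lambda)$. For the formula I would apply this equivalence to the weighted set $\mathfrak B_{-\nu}$: observing that $L^\infty(\mathbb R,\mathfrak B_{-\nu})$ is exactly the family $\mathcal A_{-\nu}$ of Remark \ref{RemkTransformation} taken with $L=\Lambda$ and $\mu=-\nu$, that remark gives $\lambda_p(\Lambda,L^\infty(\mathbb R,\mathfrak B_{-\nu}))=\lambda_p(\Lambda,\mathcal A)-\nu$. Since $V(\Lambda)$ is unchanged by reweighting and $\mathfrak B_{-\nu}$ remains nonempty and bounded, the equivalence \ref{Silk1} $\iff$ \ref{Silk2} applies to $\mathfrak B_{-\nu}$ and shows that $\mu_{\mathrm{HS}}(\Lambda,\mathfrak B_{-\nu})<1$ holds precisely when $\lambda_p(\Lambda,\mathcal A)-\nu<0$, i.e. when $\nu>\lambda_p(\Lambda,\mathcal A)$. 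Taking the infimum over such $\nu$ then yields $\lambda_p(\Lambda,\mathcal A)$.

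The only genuinely delicate point is the boundedness of the ratios $\abs{z_+}_1/\abs{z_-}_1$ guaranteeing $m>0$; without it, the inequality in Theorem \ref{TheoMuHSLyap}\ref{MuHSLyap1} would only give $\lambda_{\mathrm{HS}}(L,\mathcal A)\le 0$ and the strict stability required in \ref{Silk3} could fail. Everything else is a routine chain of sign-preserving equivalences, and I would simply verify at each step that the auxiliary objects (in particular $\mathfrak B_{-\nu}$) stay nonempty bounded subsets of $\mathcal M_d(\mathbb C)^N$ so that the cited results continue to apply.
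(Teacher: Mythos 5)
Your proof is correct and follows essentially the same route as the paper, which derives the corollary directly from Theorem \ref{TheoMuHSLyap} together with Remarks \ref{RemkTransformation} and \ref{RemkLyapHS} (and Proposition \ref{PropLyapExpo} for the sign characterization). Your explicit verification that $m>0$ — via $\Lambda_{\min}\abs{z_+}_1 \le \Lambda\cdot z_+ = \Lambda\cdot z_- \le \Lambda_{\max}\abs{z_-}_1$ for $z \in Z(\Lambda)\setminus\{0\}$ — is a detail the paper leaves implicit, and it is exactly the right point to check so that the strict inequality survives multiplication by $m$.
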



\section{Transport system}
\label{SecTransport}

For $L = (L_1, \dotsc, L_N) \in (\mathbb R_+^\ast)^N$ and $M = (m_{ij})_{i, j \in \llbracket 1, N\rrbracket}: \mathbb R \to \mathcal M_N(\mathbb C)$, we consider the system of transport equations
\begin{equation}
\label{MainSystTransport}
\Sigma_{\tau}(L, M): \;
\left\{
\begin{aligned}
 & \pad{u_i}{t}(t, x) + \pad{u_i}{x}(t, x) = 0, & \; & i \in \llbracket 1, N\rrbracket,\; t \in \left[0, +\infty\right),\; x \in [0, L_i], \\
 & u_i(t, 0) = \sum_{j=1}^{N} m_{ij}(t) u_j(t, L_j), & & i \in \llbracket 1, N\rrbracket,\; t \in \left[0, +\infty\right),
\end{aligned}
\right.
\end{equation}
where, for $i \in \llbracket 1, N\rrbracket$, $u_i(\cdot, \cdot)$ takes values in $\mathbb C$.

The time-varying matrix $M$ represents transmission conditions and in particular it may encode an underlying network for \eqref{MainSystTransport}, where the graph structure is determined by the non-zero coefficients of $M$. When no regularity assumptions are made on the function $M$, we may not have solutions for \eqref{MainSystTransport} in the classical sense in $C^1(\mathbb R_+ \times [0, L_i])$ nor in $C^0(\mathbb R_+, W^{1, p}([0, L_i], \mathbb C)) \cap C^1(\mathbb R_+, L^p([0, L_i], \mathbb C))$. We thus consider the following weaker definition of solution.

\begin{definition}
\label{DefiSolTransport}
Let $M: \mathbb R \to \mathcal M_N(\mathbb C)$ and $u_{i, 0}: [0, L_i] \to \mathbb C$ for $i \in \llbracket 1, N\rrbracket$. We say that $(u_i)_{i \in \llbracket 1, N\rrbracket}$ is a \emph{solution} of $\Sigma_\tau(L, M)$ with initial condition $(u_{i, 0})_{i \in \llbracket 1, N\rrbracket}$ if $u_i: \mathbb R_+ \times [0, L_i] \to \mathbb C$, $i \in \llbracket 1, N\rrbracket$, satisfy the second equation of \eqref{MainSystTransport}, and, for every $i \in \llbracket 1, N\rrbracket$, $t \geq 0$, $x \in [0, L_i]$, $s \in [-\min(x, t), L_i - x]$, one has $u_i(t + s, x + s) = u_i(t, x)$ and $u_i(0, x) = u_{i, 0}(x)$.
\end{definition}

\subsection{Equivalent difference equation}

For $i \in \llbracket 1, N\rrbracket$ and $M: \mathbb R \to \mathcal M_N(\mathbb C)$, define the orthogonal projection $P_i = e_i e_i^{\mathrm{T}}$ and set $A_i(\cdot) = M(\cdot) P_i$. Consider the system of difference equations
\begin{equation}
\label{EquivDelay}
v(t) = \sum_{j=1}^N A_j(t) v(t - L_j).
\end{equation}
This system is equivalent to \eqref{MainSystTransport} in the following sense.

\begin{proposition}
\label{PropTranspDelay}
Suppose that $(u_i)_{i \in \llbracket 1, N\rrbracket}$ is a solution of \eqref{MainSystTransport} with initial condition $(u_{i, 0})_{i \in \llbracket 1, N\rrbracket}$ and let $v: \left[-L_{\max}, +\infty\right) \to \mathbb C^N$ be given for $i \in \llbracket 1, N\rrbracket$ by
\begin{equation}
\label{vFuncOfU}
v_i(t) = 
\left\{
\begin{aligned}
& 0, & & \text{ if } t \in \left[-L_{\max}, -L_i\right), \\
& u_{i, 0}(-t), & & \text{ if } t \in \left[-L_i, 0\right), \\
& u_i(t, 0), & & \text{ if } t \geq 0.
\end{aligned}
\right.
\end{equation}
Then $v$ is a solution of \eqref{EquivDelay}.

Conversely, suppose that $v: \left[-L_{\max}, +\infty\right) \to \mathbb C^N$ is a solution of \eqref{EquivDelay} and let $(u_i)_{i \in \llbracket 1, N\rrbracket}$ be given for $i \in \llbracket 1, N\rrbracket$, $t \geq 0$ and $x \in [0, L_i]$ by $u_i(t, x) = v_i(t - x)$. Then $(u_i)_{i \in \llbracket 1, N\rrbracket}$ is a solution of \eqref{MainSystTransport}.
\end{proposition}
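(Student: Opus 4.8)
The plan is to reduce both implications to the single componentwise identity that \eqref{EquivDelay} encodes exactly the transmission condition of \eqref{MainSystTransport}. First I would unwind the definition $A_j = M P_j$ with $P_j = e_j e_j^{\mathrm{T}}$: since $A_j(t) v(t-L_j) = M(t) e_j\, v_j(t-L_j)$, the $i$-th component of $\sum_{j=1}^N A_j(t) v(t-L_j)$ equals $\sum_{j=1}^N m_{ij}(t) v_j(t-L_j)$. Hence \eqref{EquivDelay} is equivalent to the scalar system $v_i(t) = \sum_{j=1}^N m_{ij}(t) v_j(t-L_j)$ for $i \in \llbracket 1, N \rrbracket$, which has precisely the shape of the transmission condition in \eqref{MainSystTransport} once one identifies $v_i(t)$ with $u_i(t,0)$ and $v_j(t-L_j)$ with $u_j(t, L_j)$.

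The key step, and the only place where the transport dynamics enters, is to establish that, whenever $(u_i)_{i \in \llbracket 1, N\rrbracket}$ is a solution of \eqref{MainSystTransport} and $v$ is defined by \eqref{vFuncOfU}, one has
\[
u_j(t, L_j) = v_j(t - L_j) \qquad \text{for all } t \geq 0 \text{ and } j \in \llbracket 1, N\rrbracket.
\]
I would prove this by splitting into two cases and invoking the characteristic relation $u_j(t+s, x+s) = u_j(t,x)$ from Definition \ref{DefiSolTransport}. If $t \geq L_j$, taking $x = L_j$ and $s = -L_j$ (which is admissible, since the range $[-\min(L_j, t), 0]$ then equals $[-L_j, 0]$) gives $u_j(t, L_j) = u_j(t - L_j, 0) = v_j(t - L_j)$ by the third branch of \eqref{vFuncOfU}. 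If $0 \leq t < L_j$, taking $x = L_j$ and $s = -t$ gives $u_j(t, L_j) = u_j(0, L_j - t) = u_{j,0}(L_j - t)$; since then $t - L_j \in [-L_j, 0)$, the second branch of \eqref{vFuncOfU} yields $v_j(t - L_j) = u_{j,0}(L_j - t)$, so the identity holds again.

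With this identity in hand, the forward direction is immediate: for $t \geq 0$ the transmission condition of \eqref{MainSystTransport} together with $v_i(t) = u_i(t,0)$ gives $v_i(t) = \sum_j m_{ij}(t) u_j(t, L_j) = \sum_j m_{ij}(t) v_j(t - L_j)$, which is \eqref{EquivDelay} componentwise. For the converse, given a solution $v$ of \eqref{EquivDelay} and setting $u_i(t,x) = v_i(t-x)$, the characteristic relation $u_i(t+s, x+s) = v_i(t-x) = u_i(t,x)$ holds tautologically for every admissible $s$ (the range $s \in [-\min(x,t), L_i - x]$ being exactly what keeps $(t+s, x+s)$ inside the domain $\mathbb{R}_+ \times [0, L_i]$), the initial condition is read off as $u_{i,0}(x) = v_i(-x)$, and the transmission condition follows from $u_i(t,0) = v_i(t)$ together with $u_j(t, L_j) = v_j(t - L_j)$ and the componentwise form of \eqref{EquivDelay}.

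The main obstacle is purely bookkeeping: one must verify in each case that the shift parameter $s$ stays within the interval $[-\min(x,t), L_i - x]$ prescribed in Definition \ref{DefiSolTransport}, and that the argument $t - L_j$ selects the correct branch of \eqref{vFuncOfU} — in particular that $t - L_j \geq -L_{\max}$ for all $t \geq 0$, so that $v_j(t - L_j)$ is always well-defined and the first branch (where $v_i \equiv 0$ on $[-L_{\max}, -L_i)$) is never actually invoked for $t \geq 0$. No deeper difficulty arises, since the equivalence is essentially the statement that propagation of $u_i$ along the characteristics converts the transport PDE into the delay relation \eqref{EquivDelay}.
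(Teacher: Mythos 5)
Your proposal is correct and follows essentially the same route as the paper's proof: both reduce \eqref{EquivDelay} to its componentwise form via $A_j = MP_j$, establish the key identity $u_j(t,L_j) = v_j(t-L_j)$ by the same case split ($t \geq L_j$ versus $0 \leq t < L_j$) using the characteristic relation of Definition \ref{DefiSolTransport}, and then check the transmission condition in each direction. Your write-up is in fact slightly more explicit than the paper's about the admissibility of the shift parameter $s$ and the branch bookkeeping in \eqref{vFuncOfU}, but the substance is identical.
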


\begin{proof}
Let $(u_i)_{i \in \llbracket 1, N\rrbracket}$ be a solution of \eqref{MainSystTransport} with initial condition $(u_{i, 0})_{i \in \llbracket 1, N\rrbracket}$ and let $v: \left[-L_{\max}, +\infty\right) \allowbreak\to \mathbb C^N$ be given by \eqref{vFuncOfU}. Then, for $t \geq 0$,
\[
v_i(t) = u_i(t, 0) = \sum_{j=1}^N m_{ij}(t) u_j(t, L_j),
\]
and, by Definition \ref{DefiSolTransport}, $u_j(t, L_j) = v_j(t - L_j)$ since $u_j(t, L_j) = u_j(t - L_j, 0)$ if $t \geq L_j$ and $u_j(t, L_j) = u_{j, 0}(L_j - t)$ if $0 \leq t < L_j$. Hence $v_i(t) = \sum_{j=1}^N m_{ij}(t) v_j(t - L_j)$ and thus $v(t) = \sum_{j=1}^N A_j(t) v(t - L_j)$.

Conversely, suppose that $v: \left[-L_{\max}, +\infty\right) \to \mathbb C^N$ is a solution of \eqref{EquivDelay} with initial condition $v_0$ and let $(u_i)_{i \in \llbracket 1, N\rrbracket}$ be given for $i \in \llbracket 1, N\rrbracket$, $t \geq 0$ and $x \in [0, L_i]$ by $u_i(t, x) = v_i(t - x)$. It is then clear that $u_i(t + s, x + s) = u_i(t, x)$ for $s \in [-\min(x, t), L_i - x]$, and, since $v_i(t) = \sum_{j=1}^N m_{ij}(t) v_j(t - L_j)$,
\[
u_i(t, 0) = v_i(t) = \sum_{j=1}^N m_{ij}(t) v_j(t - L_j) = \sum_{j=1}^N m_{ij}(t) u_j(t, L_j),
\]
and so $(u_i)_{i \in \llbracket 1, N\rrbracket}$ is a solution of \eqref{MainSystTransport}.
\end{proof}

The following result follows immediately from Proposition \ref{PropExistUnique}.

\begin{proposition}
\label{PropTranspWellPosed}
Let $u_{i, 0}: [0, L_i] \to \mathbb C$ for $i \in \llbracket 1, N\rrbracket$ and $M: \mathbb R \to \mathcal M_N(\mathbb C)$. Then $\Sigma_{\tau}(L, M)$ admits a unique solution $(u_i)_{i \in \llbracket 1, N\rrbracket}$, $u_i: \mathbb R_+ \times [0, L_i] \to \mathbb C$ for $i \in \llbracket 1, N\rrbracket$, with initial condition $(u_{i, 0})_{i \in \llbracket 1, N\rrbracket}$.
\end{proposition}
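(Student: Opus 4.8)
The plan is to read off both existence and uniqueness from the difference equation \eqref{EquivDelay} through the correspondence of Proposition \ref{PropTranspDelay}, applied with $d = N$ and $A = (A_1, \dots, A_N)$, $A_i = M P_i$, so that \eqref{EquivDelay} is exactly $\Sigma_\delta(L, A)$.

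For existence, first I would encode the data $(u_{i,0})_i$ into an initial condition $v_0 : [-L_{\max}, 0) \to \mathbb C^N$ for the difference equation, setting componentwise $v_{0,i}(t) = 0$ on $[-L_{\max}, -L_i)$ and $v_{0,i}(t) = u_{i,0}(-t)$ on $[-L_i, 0)$, as in \eqref{vFuncOfU}. Proposition \ref{PropExistUnique} then produces a unique solution $v : [-L_{\max}, +\infty) \to \mathbb C^N$ of $\Sigma_\delta(L, A)$ with this initial condition. Defining $u_i(t, x) = v_i(t - x)$ for $t \geq 0$ and $x \in [0, L_i]$ (which is legitimate since $t - x \geq -L_{\max}$), the converse part of Proposition \ref{PropTranspDelay} ensures that the $u_i$ satisfy the characteristic identity $u_i(t+s, x+s) = u_i(t,x)$ and the transmission condition. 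It then remains only to match the initial condition, which amounts to $u_i(0, x) = v_i(-x) = u_{i,0}(x)$ and holds for every $x \in (0, L_i]$ by construction of $v_0$.

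For uniqueness, I would run the correspondence backwards: given any solution $(u_i)_i$ of $\Sigma_\tau(L, M)$ with the prescribed data, the direct part of Proposition \ref{PropTranspDelay} says that the $v$ built from it via \eqref{vFuncOfU} solves \eqref{EquivDelay}, and its restriction to $[-L_{\max}, 0)$ coincides with the $v_0$ above, since that restriction depends only on $(u_{i,0})_i$. Uniqueness in Proposition \ref{PropExistUnique} then pins down $v$, and the characteristic identity forces $u_i(t, x) = v_i(t - x)$ in both regimes --- $u_i(t,x) = u_i(t-x, 0) = v_i(t-x)$ when $t \geq x$, and $u_i(t,x) = u_{i,0}(x - t) = v_i(t-x)$ when $t < x$ --- so $(u_i)_i$ is itself determined.

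The whole argument is bookkeeping around the two pieces of $v_0$ (the zero-padding on $[-L_{\max}, -L_i)$ and the reflected profile on $[-L_i, 0)$) and around recovering $u_i(t,x) = v_i(t-x)$ in the two regimes $t \geq x$ and $t < x$; these I expect to be routine. The only genuinely delicate point is the corner value at $x = 0$, where $u_i(0,0) = v_i(0)$ is fixed by the transmission condition and need not equal $u_{i,0}(0)$; accordingly the pointwise initial condition is to be understood on $(0, L_i]$, i.e. almost everywhere, which is all that matters in the $L^p$ setting underlying $\mathsf X_p^\delta$.
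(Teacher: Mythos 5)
Your proof is correct and takes essentially the same route as the paper, whose entire argument is the one-line observation that the result follows immediately from Proposition \ref{PropExistUnique} via the correspondence of Proposition \ref{PropTranspDelay}; you have simply filled in the bookkeeping the paper leaves implicit. Your observation about the corner value $u_i(0,0)$ being fixed by the transmission condition rather than by $u_{i,0}(0)$ is a genuine subtlety the paper glosses over, and reading the initial condition on $(0, L_i]$, i.e.\ almost everywhere, is the right resolution in the paper's $L^p$ framework.
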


\subsection{Invariant subspaces}

For $p \in [1, +\infty]$, consider \eqref{MainSystTransport} in the Banach space
\[\mathsf X_p^\tau = \prod_{i=1}^N L^p([0, L_i], \mathbb C)\]
endowed with the norm
\[
\norm{u}_{p} =
\left\{
\begin{aligned}
& \left(\sum_{i=1}^N \norm{u_i}_{L^p([0, L_i], \mathbb C)}^p\right)^{1/p}, & & \text{ if } p \in \left[1, +\infty\right), \\
& \max_{i \in \llbracket 1, N\rrbracket} \norm{u_i}_{L^\infty([0, L_i], \mathbb C)}, & & \text{ if } p = +\infty.
\end{aligned}
\right.
\]
It follows from Proposition \ref{PropTranspDelay} and Remark \ref{RemkRegular} that, if $M \in L^\infty_{\mathrm{loc}}(\mathbb R, \mathcal M_N(\mathbb C))$ and $u_0 \in \mathsf X_p^\tau$, then the solution $t \mapsto u(t)$ of $\Sigma_\tau(L, M)$ with initial condition $u_0$ takes values in $\mathsf X_p^\tau$ for every $t \geq 0$.


In Section \ref{SecWave}, we study wave propagation on networks using transport equations via the d'Alembert decomposition. For that purpose, we need to study transport equations in the range of the d'Alembert decomposition operator, which happens to take the following form (see Proposition \ref{PropTOnto}). For $r \in \mathbb N$ and $R \in \mathcal M_{r, N}(\mathbb C)$ with coefficients $\rho_{ij}$, $i \in \llbracket 1, r\rrbracket$, $j \in \llbracket 1, N\rrbracket$, let
\begin{equation*}
\mathsf{Y}_p(R) = \left\{u = (u_1, \dotsc, u_N) \in \mathsf X_p^\tau \;\middle|\: \forall i \in \llbracket 1, r\rrbracket,\; \sum_{j = 1}^N \rho_{ij} \int_{0}^{L_j} u_j(x) dx = 0\right\}.
\end{equation*}
This is a closed subspace of $\mathsf X_p^\tau$, which is thus itself a Banach space.

\begin{remark}
Let $r \in \mathbb N$, $R \in \mathcal M_{r, N}(\mathbb C)$, and $M \in L^\infty_{\mathrm{loc}}(\mathbb R, \mathcal M_N(\mathbb C))$. Note that, if $1 \leq p \leq q \leq +\infty$, $\mathsf Y_q(R)$ is a dense subset of $\mathsf Y_p(R)$ since $\mathsf X_q^\tau$ is a dense subset of $\mathsf X_p^\tau$. As a consequence, by a density argument, Propositions \ref{PropSolExpliciteAdpBis} and \ref{PropTranspDelay}, one obtains that, if $\mathsf Y_p(R)$ is invariant under the flow of $\Sigma_\tau(L, M)$ for some $p \in [1, +\infty]$, then $\mathsf Y_q(R)$ is invariant for every $q \in [1, +\infty]$.
\end{remark}

The following proposition provides a necessary and sufficient condition for $\mathsf Y_p(R)$ to be invariant under the flow of \eqref{MainSystTransport}.

\begin{proposition}
\label{PropInvariantIff}
Let $r \in \mathbb N$, $R \in \mathcal M_{r, N}(\mathbb C)$, $(u_{i, 0})_{i \in \llbracket 1, N\rrbracket} \in \mathsf Y_p(R)$, and $M \in L^\infty_{\mathrm{loc}}(\mathbb R, \mathcal M_N(\mathbb C))$. Then the solution $u = (u_i)_{i \in \llbracket 1, N\rrbracket}$ of $\Sigma_\tau(L, M)$ with initial condition $(u_{i, 0})_{i \in \llbracket 1, N\rrbracket}$ belongs to $\mathsf Y_p(R)$ for every $t \geq 0$ if and only if
\[
R (M(t) - \id_N) w(t) = 0
\]
for almost every $t \geq 0$, where $w = (w_i)_{i \in \llbracket 1, N\rrbracket}$ and $w_i(t) = u_i(t, L_i)$.
\end{proposition}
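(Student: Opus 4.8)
The plan is to reduce the invariance of $\mathsf Y_p(R)$ to an elementary statement about the $\mathbb C^N$-valued function of integrals
\[
I(t) = \left(\int_0^{L_1} u_1(t, x)\, dx, \dotsc, \int_0^{L_N} u_N(t, x)\, dx\right).
\]
Indeed, by the very definition of $\mathsf Y_p(R)$, one has $u(t) = (u_i(t, \cdot))_{i \in \llbracket 1, N\rrbracket} \in \mathsf Y_p(R)$ if and only if $R\, I(t) = 0$, and the hypothesis $(u_{i, 0})_{i} \in \mathsf Y_p(R)$ is exactly $R\, I(0) = 0$. Thus the proposition amounts to showing that the function $t \mapsto R\, I(t)$, which vanishes at $t = 0$, vanishes for all $t \geq 0$ if and only if its (almost everywhere) derivative $R(M(t) - \id_N) w(t)$ vanishes almost everywhere.

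The first step is to obtain a manageable expression for $I(t)$. By Proposition \ref{PropTranspDelay}, the solution satisfies $u_j(t, x) = v_j(t - x)$, where $v$ solves the associated difference equation \eqref{EquivDelay} and hence, by Remark \ref{RemkRegular}, belongs to $L^p_{\mathrm{loc}}(\left[-L_{\max}, +\infty\right), \mathbb C)$. The change of variables $s = t - x$ then yields
\[
I_j(t) = \int_0^{L_j} v_j(t - x)\, dx = \int_{t - L_j}^t v_j(s)\, ds.
\]
Since $v_j$ is locally integrable, each $I_j$ is absolutely continuous on $\mathbb R_+$, with $\frac{d}{dt} I_j(t) = v_j(t) - v_j(t - L_j)$ for almost every $t \geq 0$.

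The next step is to identify the two terms through the transport structure. Proposition \ref{PropTranspDelay} gives $v_j(t - L_j) = u_j(t, L_j) = w_j(t)$, while $v_j(t) = u_j(t, 0)$ combined with the boundary condition in \eqref{MainSystTransport} gives $v_j(t) = \sum_{k=1}^N m_{jk}(t) u_k(t, L_k) = (M(t) w(t))_j$. Substituting, we get $\frac{d}{dt} I(t) = (M(t) - \id_N) w(t)$ almost everywhere, and therefore $t \mapsto R\, I(t)$ is absolutely continuous with
\[
\frac{d}{dt}\left(R\, I(t)\right) = R(M(t) - \id_N) w(t) \qquad \text{for a.e. } t \geq 0.
\]

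Finally I would read off both implications. If $R(M(t) - \id_N) w(t) = 0$ for almost every $t \geq 0$, then $R\, I$ has vanishing derivative almost everywhere and, being absolutely continuous, is constant equal to $R\, I(0) = 0$; hence $u(t) \in \mathsf Y_p(R)$ for every $t \geq 0$. Conversely, if $R\, I(t) = 0$ for all $t \geq 0$, then differentiating gives $R(M(t) - \id_N) w(t) = 0$ almost everywhere. The only delicate point is justifying the differentiation in this low-regularity setting, where $u$ is merely a weak solution and $M \in L^\infty_{\mathrm{loc}}$; this is precisely what the sliding-interval representation $I_j(t) = \int_{t - L_j}^t v_j(s)\, ds$ resolves, since it reduces everything to the classical absolute continuity of indefinite integrals of $L^1_{\mathrm{loc}}$ functions and sidesteps any attempt to integrate $\partial_x u_j$ directly.
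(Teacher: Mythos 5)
Your proof is correct and follows essentially the same route as the paper: both reduce to the difference equation via Proposition \ref{PropTranspDelay}, rewrite $\int_0^{L_j} u_j(t,x)\,dx$ as the sliding integral $\int_{t-L_j}^t v_j(s)\,ds$, and use the boundary relation $v_j(t) = (M(t)w(t))_j$ to conclude via the fundamental theorem of calculus. The only cosmetic difference is that the paper telescopes directly to the identity $R\,I(t) = \int_0^t R(M(s)-\id_N)w(s)\,ds$ rather than differentiating the absolutely continuous function $R\,I$, but the content is identical.
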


\begin{proof}
Let $v: \left[-L_{\max}, +\infty\right) \to \mathbb C^N$ be the solution of \eqref{EquivDelay} corresponding to $u$, given by \eqref{vFuncOfU}, and let $w = (w_i)_{i \in \llbracket 1, N\rrbracket}$ be defined by $w_i(t) = v_i(t - L_i) = u_i(t, L_i)$. Let $\lambda = (\lambda_i)_{i \in \llbracket 1, r\rrbracket}$ be given for $i \in \llbracket 1, r\rrbracket$ by $\lambda_i(t) = \sum_{j = 1}^N \rho_{ij} \int_0^{L_j} u_j(t, x) dx$. Since $\lambda_i(0) = 0$, we have
\begin{align*}
\lambda_i(t) & = \sum_{j = 1}^N \rho_{ij} \left[\int_0^{L_j} u_j(t, x) dx - \int_0^{L_j} u_{j, 0}(x) dx\right] \displaybreak[0] \\
 & = \sum_{j = 1}^N \rho_{ij} \left[\int_0^{L_j} v_j(t - x) dx - \int_0^{L_j} v_j(-x) dx\right] \displaybreak[0] \\
 & = \sum_{j = 1}^N \rho_{ij} \left[\int_{t - L_j}^{t} v_j(s) ds - \int_0^{L_j} v_j(s - L_j) ds\right] \displaybreak[0] \\
 & = \sum_{j = 1}^N \rho_{ij} \int_{0}^{t} \left(v_j(s) - v_j(s - L_j)\right) ds \displaybreak[0] \\
 & = \sum_{j = 1}^N \rho_{ij} \int_{0}^{t} \left(\sum_{k=1}^N m_{jk}(s) v_k(s - L_k) - v_j(s - L_j)\right) ds \displaybreak[0] \\
 & = \sum_{j = 1}^N \rho_{ij} \int_{0}^{t} \sum_{k=1}^N \left(m_{jk}(s) - \delta_{jk}\right) v_k(s - L_k) ds,
\end{align*}
so that $\lambda(t) = \int_0^t R (M(s) - \id_N) w(s) ds$. The conclusion of the proposition follows immediately.
\end{proof}

\begin{definition}
Let $L \in (\mathbb R_+^\ast)^N$ and $\mathcal M$ be a subset of $L^\infty_{\mathrm{loc}}(\mathbb R, \mathcal M_N(\mathbb C))$. We denote by $\mathrm{Inv}(\mathcal M)$ the set
\[
\begin{split}
\mathrm{Inv}(\mathcal M) = \{ & R \in \mathcal M_{r, N}(\mathbb C) \;|\: r \in \mathbb N,\; \mathsf Y_p(R) \text{ is invariant under} \\
 & \text{the flow of } \Sigma_\tau(L, M),\; \forall M \in \mathcal M,\; \forall p \in [1, +\infty]\}.
\end{split}
\]
\end{definition}

\subsection{Stability of solutions on invariant subspaces}

We next provide a definition for exponential stability of \eqref{MainSystTransport}.

\begin{definition}
Let $p \in [1, +\infty]$, $L \in (\mathbb R_+^\ast)^N$, $\mathcal M$ be a uniformly locally bounded subset of $L^\infty_{\mathrm{loc}}(\mathbb R, \allowbreak\mathcal M_N(\mathbb C))$, and $R \in \mathrm{Inv}(\mathcal M)$. Let $\Sigma_\tau(L, \mathcal M)$ denote the family of systems $\Sigma_\tau(L, M)$ for $M \in \mathcal M$. We say that $\Sigma_\tau(L, \mathcal M)$ is of \emph{exponential type} $\gamma$ in $\mathsf Y_p(R)$ if, for every $\varepsilon > 0$, there exists $K > 0$ such that, for every $M \in \mathcal M$ and $u_0 \in \mathsf Y_p(R)$, the corresponding solution $u$ of $\Sigma_\tau(L, M)$ satisfies, for every $t \geq 0$,
\[
\norm{u(t)}_{p} \leq K e^{(\gamma + \varepsilon) t} \norm{u_0}_{p}.
\]
We say that $\Sigma_\tau(L, \mathcal M)$ is \emph{exponentially stable} in $\mathsf Y_p(R)$ if it is of negative exponential type.
\end{definition}

The next corollaries translate Propositions \ref{PropConvCoeffConvSolGeneral} and \ref{PropNConvCoeffNConvSolGeneral} into the framework of transport equations.

\begin{corollary}
Let $\Lambda \in (\mathbb R_+^\ast)^N$, $L \in V_+(\Lambda)$, and $\mathcal M$ be a uniformly locally bounded subset of $L^\infty_{\mathrm{loc}}(\mathbb R, \allowbreak\mathcal M_N(\mathbb C))$. Suppose that there exists a continuous function $f: \mathbb R \to \mathbb R_+^\ast$ such that, for every $M \in \mathcal M$, $\mathbf n \in \mathbb N^N$, and almost every $t \in (L \cdot \mathbf n - L_{\max}, L \cdot \mathbf n)$, \eqref{HypoConvCoeff} holds with $A = (A_1, \dotsc, A_N)$ given by $A_i = M P_i$. Then there exists a constant $C > 0$ such that, for every $M \in \mathcal M$, $p \in \left[1, +\infty\right]$, and $u_0 \in \mathsf X_p^\tau$, the corresponding solution $u$ of $\Sigma_\tau(L, M)$ satisfies
\begin{equation*}
\norm{u(t)}_{p} \leq C (t + 1)^{N-1} \max_{s \in [t - L_{\max}, t]} f(s) \norm{u_0}_{p}, \qquad \forall t \geq 0.
\end{equation*}
\end{corollary}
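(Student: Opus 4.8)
The plan is to deduce this corollary directly from Proposition \ref{PropConvCoeffConvSolGeneral} via the dictionary between transport systems and difference equations provided by Proposition \ref{PropTranspDelay}. First I would introduce the class
\[
\mathcal A = \{(M P_1, \dotsc, M P_N) \;|\: M \in \mathcal M\} \subset L^\infty_{\mathrm{loc}}(\mathbb R, \mathcal M_N(\mathbb C)^N),
\]
and observe that $\mathcal A$ is uniformly locally bounded: since the $P_i = e_i e_i^{\mathrm T}$ are fixed projections, $\abs{M(t) P_i} \leq \abs{M(t)}$ for every $i$, so the uniform local bound on $\mathcal M$ transfers to $\mathcal A$. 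The standing hypothesis of the corollary is precisely \eqref{HypoConvCoeff} for this $\mathcal A$, so Proposition \ref{PropConvCoeffConvSolGeneral} applies verbatim and yields a constant $C > 0$ such that every solution $v$ of $\Sigma_\delta(L, A)$ (with $A = (M P_1, \dotsc, M P_N)$) satisfies $\norm{v_t}_p \leq C (t+1)^{N-1} \max_{s \in [t - L_{\max}, t]} f(s) \norm{v_0}_p$ for all $p \in [1, +\infty]$ and $u_0 \in \mathsf X_p^\delta$.

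The core of the argument is then the comparison of norms under the correspondence $u_i(t, x) = v_i(t - x)$ of Proposition \ref{PropTranspDelay}, where $v$ is built from the initial data via \eqref{vFuncOfU}. For $p \in [1, +\infty)$ the change of variable $s = t - x$ gives
\[
\norm{u(t)}_p^p = \sum_{i=1}^N \int_0^{L_i} \abs{v_i(t - x)}^p dx = \sum_{i=1}^N \int_{t - L_i}^{t} \abs{v_i(s)}^p ds,
\]
and since $[t - L_i, t] \subset [t - L_{\max}, t]$ for every $i$, the right-hand side is bounded above by $\sum_{i=1}^N \int_{t - L_{\max}}^{t} \abs{v_i(s)}^p ds = \norm{v_t}_p^p$. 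Thus $\norm{u(t)}_p \leq \norm{v_t}_p$. The same nesting of intervals, applied to the initial datum, shows via \eqref{vFuncOfU} (where $v_i$ vanishes on $[-L_{\max}, -L_i)$ and equals $u_{i,0}(-\cdot)$ on $[-L_i, 0)$) that $\norm{v_0}_p^p = \sum_{i=1}^N \int_0^{L_i} \abs{u_{i,0}(x)}^p dx = \norm{u_0}_p^p$, i.e. the initial norms coincide exactly. Chaining these two facts with the difference-equation estimate produces the claimed bound on $\norm{u(t)}_p$.

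Finally I would dispatch the endpoint $p = +\infty$, where the norms are suprema rather than sums but the identical interval-nesting argument gives $\norm{u(t)}_\infty \leq \norm{v_t}_\infty$ and $\norm{u_0}_\infty = \norm{v_0}_\infty$; the constant $C$ from Proposition \ref{PropConvCoeffConvSolGeneral} is independent of $p$, so a single $C$ works across the whole range $p \in [1, +\infty]$. I do not expect a genuine obstacle here: the only point requiring care is verifying that the substitution $u_i(t,x) = v_i(t-x)$ together with the piecewise definition \eqref{vFuncOfU} yields \emph{both} the one-sided inequality $\norm{u(t)}_p \leq \norm{v_t}_p$ \emph{and} the exact equality of initial norms, so that the growth estimate is inherited without loss of the sharp factor $\max_{s \in [t - L_{\max}, t]} f(s)$.
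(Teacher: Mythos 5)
Your proposal is correct and follows essentially the same route as the paper's proof: pass to the equivalent difference equation via Proposition \ref{PropTranspDelay}, apply Proposition \ref{PropConvCoeffConvSolGeneral} to the class $\mathcal A = \{(M P_1, \dotsc, M P_N) \;|\: M \in \mathcal M\}$, and conclude from the norm comparisons $\norm{u_0}_{p} = \norm{v_0}_{p}$ and $\norm{u(t)}_{p} \leq \norm{v_t}_{p}$. The only difference is that you spell out the change-of-variables and interval-nesting computations (and the uniform local boundedness of $\mathcal A$, plus the $p$-independence of $C$) which the paper merely asserts with ``Notice that''.
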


\begin{proof}
Let $C > 0$ be as in the Proposition \ref{PropConvCoeffConvSolGeneral}. Let $M \in \mathcal M$, $p \in \left[1, +\infty\right]$, $u_0 \in \mathsf X_p^\tau$, and $u$ be the solution of $\Sigma_\tau(L, M)$ with initial condition $u_0$. Let $v$ be the corresponding solution of \eqref{EquivDelay}, given by \eqref{vFuncOfU}, with initial condition $v_0$. Notice that $\norm{u_0}_{p} = \norm{v_0}_{p}$ and, for every $t \geq 0$, $\norm{u(t)}_{p} \leq \norm{v_t}_{p}$. By Proposition \ref{PropConvCoeffConvSolGeneral}, we have, for every $t \geq 0$,
\[
\begin{split}
\norm{u(t)}_{p} & \leq \norm{v_t}_{p} \leq C (t + 1)^{N-1} \max_{s \in [t - L_{\max}, t]} f(s) \norm{v_0}_{p} \\
 & = C (t + 1)^{N-1} \max_{s \in [t - L_{\max}, t]} f(s) \norm{u_0}_{p},
\end{split}
\]
which is the desired result.
\end{proof}

\begin{corollary}
Let $\Lambda \in (\mathbb R_+^\ast)^N$, $L \in W_+(\Lambda)$, $\mathcal M$ be a uniformly locally bounded subset of $L^\infty_{\mathrm{loc}}(\mathbb R, \allowbreak\mathcal M_N(\mathbb C))$, and $f: \mathbb R \to \mathbb R_+^\ast$ be a continuous function. Suppose that there exist $M \in \mathcal M$, $\mathbf n_0 \in \mathbb N^N$, and a set of positive measure $S \subset \left(L \cdot \mathbf n_0 - L_{\max}, L \cdot \mathbf n_0\right)$ such that, for every $t \in S$, \eqref{HypoNConvCoeff} is satisfied with $A = (A_1, \dotsc,\allowbreak A_N)$ given by $A_i = M P_i$. Then there exist a constant $C > 0$ independent of $f$, an initial condition $u_0 \in \mathsf X_\infty^\tau$, and $t > 0$ such that, for every $p \in [1, +\infty]$ and $R \in \mathrm{Inv}(\mathcal M)$, the solution $u$ of $\Sigma_\tau(L, M)$ with initial condition $u_0$ satisfies $u(s) \in \mathsf Y_p(R)$ for every $s \geq 0$ and
\begin{equation*}
\norm{u(t)}_{p} > C \min_{s \in [t - L_{\max}, t]} f(s) \norm{u_0}_{p}.
\end{equation*}
\end{corollary}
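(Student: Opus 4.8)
The plan is to pass to the difference equation \eqref{EquivDelay} with $A_i=MP_i$ via Proposition~\ref{PropTranspDelay}: from a solution $v$ of \eqref{EquivDelay} I build the transport solution $u_i(t,x)=v_i(t-x)$, whose datum is $u_{i,0}(x)=v_{0,i}(-x)$. The catch is that the only elementary comparison of norms, $\norm{u(t)}_p\le\norm{v_t}_p$ (coming from $[t-L_i,t]\subset[t-L_{\max},t]$), points the wrong way for a lower bound, so Proposition~\ref{PropNConvCoeffNConvSolGeneral} cannot be quoted verbatim; I must reopen its proof and exploit the structure $A_i=MP_i$. By Remark~\ref{RemkLambdaPrime} I take $\Lambda=L$ and drop the superscripts $L,\Lambda,A$.

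The key computation is that $P_i=e_ie_i^{\mathrm T}$ gives, via \eqref{EqDefiXiHat}, $\widehat A_{[j]}(\tau)e_{j_0}=M(\tau)e_{j_0}\,\mathbf 1_{\{j_0\in[j]\}}$, so that \eqref{EqDefiTheta} collapses to
\[
\Theta_{[\mathbf n_0],s}e_{j_0}=\widehat\Xi_{[\mathbf n_0-e_{j_0}],s}\,M(s-L\cdot\mathbf n_0+L_{j_0})\,e_{j_0}
\]
when $s\ge L\cdot\mathbf n_0-L_{j_0}$, and $\Theta_{[\mathbf n_0],s}e_{j_0}=0$ otherwise. Hence the datum-direction column of $\Theta_{[\mathbf n_0],\cdot}$ vanishes below $L\cdot\mathbf n_0-L_{j_0}$, which is precisely what will confine the support of the eventual initial condition to $[-L_{j_0},0)$ and make it a genuine transport datum.

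Mimicking Proposition~\ref{PropNConvCoeffNConvSolGeneral}, two pigeonhole steps produce a column index $j_0$ and a row index $i_0$ together with a positive measure set $\hat S$ on which $\abs{(\Theta_{[\mathbf n_0],s}e_{j_0})_{i_0}}>C_2^{-1}N^{-1}f(s)$. Since this forces $\hat S\subset[L\cdot\mathbf n_0-L_{j_0},L\cdot\mathbf n_0)$, and $\Theta_{[\mathbf n_0],s}\ne0$ forces $s\ge0$ (Remark~\ref{RemkLambdaPrime}), I select a two-sided density point $t_0$ in the interior, so $t_0>\max\{0,L\cdot\mathbf n_0-L_{j_0}\}$. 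I then fix $\delta>0$ meeting the constraints of Proposition~\ref{PropNConvCoeffNConvSolGeneral} plus $2\delta<L_{j_0}-(L\cdot\mathbf n_0-t_0)$ and $2\delta<L_{\min}$, set $S_1=(\hat S-t_0)\cap(-\delta,\delta)$, and take $\mu$ bounded, nonzero, supported in $S_1$ and \emph{of zero mean}. Then $v_0(s)=\mu(s-t_0+L\cdot\mathbf n_0)e_{j_0}$ is supported in $[-L_{j_0},0)$, so $u_0\in\mathsf X_\infty^\tau$, $\norm{u_0}_p=\norm{v_0}_p$, and $u_i(t,x)=v_i(t-x)$ solves $\Sigma_\tau(L,M)$ with datum $u_0$. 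The zero mean makes the integral vector $\bigl(\int_0^{L_i}u_{0,i}\bigr)_i=(\int\mu)e_{j_0}$ vanish, so $u_0\in\mathsf Y_p(R)$ for every $R\in\mathrm{Inv}(\mathcal M)$ and every $p$, whence $u(s)\in\mathsf Y_p(R)$ for all $s\ge0$ by invariance.

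With $t=t_0+\delta$, the separation constraint isolates $[\mathbf n_0]$ in \eqref{ExplicitUAdpBis}, giving $v(t_0+s)=\Theta_{[\mathbf n_0],t_0+s}\mu(s)e_{j_0}$ for $s\in(-\delta,\delta)$, while $2\delta<L_{i_0}$ ensures $(t_0-\delta,t_0+\delta)\subset[t-L_{i_0},t]$. Therefore
\[
\norm{u(t)}_p^p\ge\int_{t-L_{i_0}}^t\abs{v_{i_0}(s)}^p\,ds\ge\int_{S_1}\abs{(\Theta_{[\mathbf n_0],t_0+s}e_{j_0})_{i_0}}^p\abs{\mu(s)}^p\,ds,
\]
and the bound on $\hat S$ together with $\int_{S_1}\abs{\mu}^p=\norm{u_0}_p^p$ yields $\norm{u(t)}_p>C\min_{s\in[t-L_{\max},t]}f(s)\,\norm{u_0}_p$ with $C=C_2^{-1}N^{-1}$ independent of $f$; the case $p=+\infty$ is analogous. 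The one genuine obstacle, the wrong-way norm inequality, is thus overcome by localizing a single row $i_0$ near $t_0$ (so that its mass lands inside $[t-L_{i_0},t]$) and by the vanishing-column fact that anchors the datum in $[-L_{j_0},0)$.
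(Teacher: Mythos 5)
Your proof is correct, and its skeleton is the paper's own: reduce to the difference equation via Proposition \ref{PropTranspDelay}, take $\Lambda = L$ by Remark \ref{RemkLambdaPrime}, rerun the construction of Proposition \ref{PropNConvCoeffNConvSolGeneral} with the additional zero-mean requirement $\int_{-\delta}^{\delta}\mu = 0$ (which places $u_0$ in $\mathsf Y_\infty(R)$ for every $R \in \mathrm{Inv}(\mathcal M)$ at once), isolate $[\mathbf n_0]$ by the same separation choice of $\delta$, and conclude at $t = t_0 + \delta$. You deviate at exactly the two delicate junctures, and both deviations are sound. To confine the datum to $[0, L_{j_0}]$, the paper argues a posteriori: with $\delta < \abs{t_0 - L\cdot\mathbf n_0 + L_{j_0}}$ the support interval is either contained in $[0, L_{j_0}]$ or disjoint from it, and the disjoint case is excluded by contradiction ($u_{j_0,0} = 0$ would force $v \equiv 0$, against the divergence estimate). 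You instead compute a priori from $A_i = M P_i$ that $\widehat A_{[j]}(\cdot)e_{j_0} = 0$ unless $j_0 \in [j]$, hence $\Theta_{[\mathbf n_0], s}e_{j_0} = \widehat\Xi_{[\mathbf n_0 - e_{j_0}], s} M(s - L\cdot\mathbf n_0 + L_{j_0}) e_{j_0}$ for $s \geq L\cdot\mathbf n_0 - L_{j_0}$ and vanishes below, which pins $\hat S \subset [L\cdot\mathbf n_0 - L_{j_0}, L\cdot\mathbf n_0)$ and lets you pick $\delta$ directly; this is more transparent about where the inclusion comes from, at the cost of a structural computation the paper avoids. For the wrong-way norm comparison, the paper keeps the whole vector and uses the trace identity $v_i(t_0 + s) = u_i(t_0 + s, 0)$ plus the flow, namely
\begin{equation*}
\int_{-\delta}^{\delta}\sum_{i=1}^N \abs{u_i(t_0 + s, 0)}^p\, ds = \int_0^{2\delta}\sum_{i=1}^N \abs{u_i(t_0 + \delta, x)}^p\, dx \leq \norm{u(t_0+\delta)}_p^p,
\end{equation*}
valid since $\delta < L_{\min}/2$, whereas you pay one extra pigeonhole to fix a row $i_0$ and bound $\norm{u(t)}_p^p \geq \int_{t - L_{i_0}}^{t}\abs{v_{i_0}(s)}^p\, ds$; both hinge on $2\delta \leq L_{\min}$ and the constants differ immaterially, so your $C$ remains independent of $f$ as required.
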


\begin{proof}
As in Proposition \ref{PropNConvCoeffNConvSolGeneral}, since $L \in W_+(\Lambda)$, we can assume for the rest of the argument that $\Lambda = L$.

Let $C > 0$ be as in Proposition \ref{PropNConvCoeffNConvSolGeneral}. We construct an initial condition $v_0 \in \mathsf X_p^\delta$ as follows: choose $t_0$ and $j_0$ as in Proposition \ref{PropNConvCoeffNConvSolGeneral} and verifying in addition $t_0 \not = L \cdot \mathbf n_0 - L_{j_0}$. Then pick $\delta > 0$ as in Proposition \ref{PropNConvCoeffNConvSolGeneral} and satisfying in addition $\delta < \abs{t_0 - L \cdot \mathbf n_0 + L_{j_0}}$ and $\delta < L_{\min}/2$. Next, take $\mu \in L^\infty(\mathbb R, \mathbb R)$ as in Proposition \ref{PropNConvCoeffNConvSolGeneral} and satisfying in addition $\int_{-\delta}^\delta \mu(s) ds = 0$. Finally, consider the initial condition $v_0(s) = \mu(s - t_0 + L \cdot \mathbf n_0) e_{j_0}$. As in \eqref{EstimNConv}, we still obtain that the solution $v$ of \eqref{EquivDelay} with initial condition $v_0$ satisfies, for $p \in [1, +\infty]$,
\begin{equation}
\label{DivV}
\norm{v_{t_0 + \delta}}_{p} \geq \norm{v_{t_0}}_{L^p([-\delta, \delta], \mathbb C^N)} > C \min_{s \in [t_0 + \delta - L_{\max}, t_0 + \delta]} f(s) \norm{v_0}_{p}.
\end{equation}

Let $u$ be the solution of \eqref{MainSystTransport} corresponding to $v$, in the sense of Proposition \ref{PropTranspDelay}, and denote by $u_0 = (u_{i, 0})_{i \in \llbracket 1, N\rrbracket}$ its initial condition. Since $u_{i, 0}(x) = v_i(-x)$, we have $u_0 \in \prod_{i=1}^N L^\infty([0, L_i], \mathbb C)$. Furthermore, $u_{i, 0} = 0$ for $i \not = j_0$ and $u_{j_0, 0}(x) = v_{j_0}(-x) = \mu(L \cdot \mathbf n_0 - t_0 - x)$. By definition of $\delta$, we must have either $(L \cdot \mathbf n_0 - t_0 - \delta, L \cdot \mathbf n_0 - t_0 + \delta) \subset [0, L_{j_0}]$ or $(L \cdot \mathbf n_0 - t_0 - \delta, L \cdot \mathbf n_0 - t_0 + \delta) \cap [0, L_{j_0}] = \emptyset$, but the latter case is impossible since we would then have $u_{j_0, 0} = 0$, and thus $v(s) = 0$ for every $s \geq -L_{\max}$, which contradicts \eqref{DivV}. Hence $(L \cdot \mathbf n_0 - t_0 - \delta, L \cdot \mathbf n_0 - t_0 + \delta) \subset [0, L_{j_0}]$ and
\[
\int_{0}^{L_{j_0}} u_{j_0, 0}(x) dx = \int_{-\delta}^{\delta} \mu(x) dx = 0.
\]
We thus have clearly $u_0 \in \mathsf Y_\infty(R)$, and in particular $u(s) \in \mathsf Y_p(R)$ for every $s \geq 0$ and $p \in [1, +\infty]$. Furthermore, $\norm{v_0}_{p} = \norm{u_0}_{p}$ and, for $p \in \left[1, +\infty\right)$,
\[
\begin{split}
\norm{v_{t_0}}_{L^p([-\delta, \delta], \mathbb C^N)}^p & = \int_{-\delta}^{\delta} \abs{v(t_0 + s)}_p^p ds = \int_{-\delta}^\delta \sum_{i=1}^N \abs{u_i(t_0 + s, 0)}^p ds \\
 & = \int_0^{2\delta} \sum_{i=1}^N \abs{u_i(t_0 + \delta, s)}^p ds \leq \sum_{i=1}^N \int_0^{L_i} \abs{u_i(t_0 + \delta, s)}^p ds \\
 & = \norm{u(t_0 + \delta)}_{p}^p,
\end{split}
\]
with a similar estimate for $p = +\infty$. Hence, it follows from \eqref{DivV} that, for every $p \in [1, +\infty]$,
\[\norm{u(t)}_{p} > C \min_{s \in [t - L_{\max}, t]} f(s) \norm{u_0}_{p}\]
with $t = t_0 + \delta$.
\end{proof}

As a consequence of the previous analysis, we have the following result.

\begin{theorem}
\label{TheoTranspTheta}
Let $\mathcal M$ be a uniformly locally bounded subset of $L^\infty_{\mathrm{loc}}(\mathbb R, \mathcal M_N(\mathbb C))$, $\Lambda \in (\mathbb R_+^\ast)^N$, and $\mathcal A = \{A = (A_1, \dotsc, A_N): \mathbb R \to \mathcal M_N(\mathbb C)^N \;|\: A_i = M P_i, M \in \mathcal M\}$. For every $L \in V_+(\Lambda)$, if $\Sigma_\delta(L, \mathcal A)$ is of $(\Theta, \Lambda)$-exponential type $\gamma$ then, for every $p \in [1, +\infty]$ and $R \in \mathrm{Inv}(\mathcal M)$, $\Sigma_\tau(L, \mathcal M)$ is of exponential type $\gamma$ in $\mathsf Y_p(R)$. Conversely, for every $L \in W_+(\Lambda)$, if there exist $p \in [1, +\infty]$ and $R \in \mathrm{Inv}(\mathcal M)$ such that $\Sigma_\tau(L, \mathcal M)$ is of exponential type $\gamma$ in $\mathsf Y_p(R)$, then $\Sigma_\delta(L, \mathcal A)$ is of $(\Theta, \Lambda)$-exponential type $\gamma$.
\end{theorem}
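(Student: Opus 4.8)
The plan is to deduce this statement from the two corollaries immediately above --- the transport counterparts of Propositions~\ref{PropConvCoeffConvSolGeneral} and~\ref{PropNConvCoeffNConvSolGeneral} --- in precisely the same manner that Theorem~\ref{TheoP0EquivP1} was extracted from those propositions, the only new ingredient being the specialization of the comparison function to the exponential $f(t) = K e^{(\gamma + \varepsilon) t}$.

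For the first implication I would start from the assumption that $\Sigma_\delta(L, \mathcal A)$ is of $(\Theta, \Lambda)$-exponential type $\gamma$: fixing $\varepsilon > 0$ and the associated $K > 0$, the bound $\abs{\Theta_{[\mathbf n], t}^{L, \Lambda, A}} \leq K e^{(\gamma + \varepsilon) t}$ is exactly hypothesis~\eqref{HypoConvCoeff} with $f(t) = K e^{(\gamma + \varepsilon) t}$ and $A_i = M P_i$. The first transport corollary then gives $\norm{u(t)}_{p} \leq C (t+1)^{N-1} \max_{s \in [t - L_{\max}, t]} K e^{(\gamma + \varepsilon) s} \norm{u_0}_{p}$ for every $M \in \mathcal M$, $p \in [1, +\infty]$, and $u_0 \in \mathsf X_p^\tau$. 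Since $\max_{s \in [t - L_{\max}, t]} e^{(\gamma + \varepsilon) s} \leq e^{\abs{\gamma + \varepsilon} L_{\max}} e^{(\gamma + \varepsilon) t}$ and the polynomial factor $(t+1)^{N-1}$ is absorbed into an arbitrarily small increase of the rate, one reads off exponential type $\gamma$; restricting to $u_0 \in \mathsf Y_p(R)$, whose invariance (as $R \in \mathrm{Inv}(\mathcal M)$) keeps the solution in $\mathsf Y_p(R)$, yields exponential type $\gamma$ in $\mathsf Y_p(R)$ for every $p$ and $R$.

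The converse is where the real work lies, and I would argue by contradiction. Assuming $L \in W_+(\Lambda)$ and that $\Sigma_\tau(L, \mathcal M)$ is of exponential type $\gamma$ in $\mathsf Y_p(R)$ for some $p$ and $R \in \mathrm{Inv}(\mathcal M)$, suppose $\Sigma_\delta(L, \mathcal A)$ fails to be of $(\Theta, \Lambda)$-exponential type $\gamma$. Then there is a fixed $\varepsilon > 0$ such that, for every $K > 0$, one can find $A \in \mathcal A$ (equivalently, $M \in \mathcal M$ with $A_i = M P_i$), $\mathbf n_0 \in \mathbb N^N$, and a positive-measure set on which $\abs{\Theta_{[\mathbf n_0], s}^{L, \Lambda, A}} > K e^{(\gamma + \varepsilon) s}$, i.e. \eqref{HypoNConvCoeff} holds with $f(s) = K e^{(\gamma + \varepsilon) s}$. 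The second transport corollary produces, for each $K$, an initial datum $u_0 \in \mathsf Y_p(R)$ and a time $t > 0$ with $\norm{u(t)}_{p} > C \min_{s \in [t - L_{\max}, t]} K e^{(\gamma + \varepsilon) s} \norm{u_0}_{p} \geq C^\prime K e^{(\gamma + \varepsilon) t} \norm{u_0}_{p}$, where $C^\prime = C e^{-\abs{\gamma + \varepsilon} L_{\max}}$ and $C$ is independent of $K$. Comparing this with the upper bound $\norm{u(t)}_{p} \leq K^\prime e^{(\gamma + \varepsilon^\prime) t} \norm{u_0}_{p}$ coming from exponential type $\gamma$ at any rate $\varepsilon^\prime \in (0, \varepsilon)$, and using $t > 0$ to get $e^{(\varepsilon - \varepsilon^\prime) t} \geq 1$, one arrives at $C^\prime K < K^\prime$, which is violated as soon as $K > K^\prime / C^\prime$ --- the desired contradiction.

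The main obstacle is the bookkeeping in the converse: one must be sure that the initial condition delivered by the second transport corollary genuinely lies in the invariant subspace $\mathsf Y_p(R)$ --- which is precisely what the vanishing-integral condition built into that corollary's construction guarantees --- and that the relevant time $t$ is strictly positive, since it is exactly the positivity of $t$ together with the strict gap $\varepsilon - \varepsilon^\prime > 0$ that turns the competing exponential bounds into $C^\prime K < K^\prime$ and hence into a contradiction for large $K$. By contrast, the forward implication is routine once the polynomial prefactor and the windowed maximum are absorbed into the exponential rate.
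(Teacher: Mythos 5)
Your proposal is correct and takes essentially the same approach as the paper: the paper states Theorem \ref{TheoTranspTheta} as an immediate consequence of the two preceding transport corollaries (the analogues of Propositions \ref{PropConvCoeffConvSolGeneral} and \ref{PropNConvCoeffNConvSolGeneral}), obtained by specializing the comparison function to $f(t) = K e^{(\gamma + \varepsilon) t}$, exactly as Theorem \ref{TheoP0EquivP1} was extracted from those propositions. Your filled-in details --- absorbing the window maximum and the polynomial prefactor into an arbitrarily small increase of the rate in the forward direction, and, in the converse, the contradiction argument using the $K$-independence of the corollary's constant together with the membership of the constructed initial datum in $\mathsf Y_p(R)$ --- are sound.
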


It follows from Theorem \ref{TheoTranspTheta} that the exponential type $\gamma$ for $\Sigma_\tau(L, \mathcal M)$ in $\mathsf Y_p(R)$ is independent of $p \in [1, +\infty]$ and $R \in \mathrm{Inv}(\mathcal M)$. When $\mathcal M$ is shift-invariant, thanks to Theorem \ref{TheoStabShiftInv}, one can replace $(\Theta, \Lambda)$-exponential type by $(\widehat\Xi, \Lambda)$-exponential type for $\Sigma_\delta(L, \mathcal A)$ in Theorem \ref{TheoTranspTheta}.

Assume now that $\mathcal M = L^\infty(\mathbb R, \mathfrak B)$, where $\mathfrak B$ is a bounded subset of $\mathcal M_N(\mathbb C)$. Let $\mathcal A = \{A = (A_1, \dotsc, A_N): \mathbb R \to \mathcal M_N(\mathbb C)^N \;|\: A_i = M P_i,\; M \in \mathcal M\}$, i.e., $\mathcal A = L^\infty(\mathbb R, \mathfrak A)$ where $\mathfrak A = \{A = (A_1, \dotsc, A_N) \in \mathcal M_N(\mathbb C)^N \;|\: A_i = M P_i,\; M \in \mathfrak B\}$. We can thus transpose the results from Section \ref{SecSwitched}, and in particular Corollary \ref{CoroSilkRho0}, to the transport framework.

\begin{corollary}
\label{CoroSilkTransport}
Let $\Lambda \in (\mathbb R_+^\ast)^N$, $\mathfrak B$ be a nonempty bounded subset of $\mathcal M_N(\mathbb C)$, $\mathcal M = L^\infty(\mathbb R, \mathfrak B)$. The following statements are equivalent.
\begin{enumerate}[label={\bf \roman*.}, ref={(\roman*)}]
\item $\Sigma_\tau(\Lambda, \mathcal M)$ is exponentially stable in $\mathsf Y_p(R)$ for some $p \in [1, +\infty]$ and $R \in \mathrm{Inv}(\mathcal M)$.
\item $\Sigma_\tau(L, \mathcal M)$ is exponentially stable in $\mathsf Y_p(R)$ for every $L \in V_+(\Lambda)$, $p \in [1, +\infty]$, and $R \in \mathrm{Inv}(\mathcal M)$.
\end{enumerate}
\end{corollary}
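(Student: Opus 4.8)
The plan is to push the equivalence for the associated difference equation (Corollary \ref{CoroSilkRho0}) through the correspondence between transport and difference equations provided by Theorem \ref{TheoTranspTheta}, working with $\mathfrak A$ and $\mathcal A = L^\infty(\mathbb R, \mathfrak A)$ as defined above. Recall that $\mathcal A$ is bounded and shift-invariant, so that Theorems \ref{TheoStabShiftInv} and \ref{TheoMuLyap} and Corollary \ref{CoroSilkRho0} all apply to it. The implication from the second statement to the first is immediate, since $\Lambda \in V_+(\Lambda)$ and $\mathrm{Inv}(\mathcal M)$ is nonempty: it contains the $0 \times N$ matrix (the case $r = 0$), for which $\mathsf Y_p(R) = \mathsf X_p^\tau$ is trivially invariant.

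For the converse, I would start from a pair $p_0$ and $R_0 \in \mathrm{Inv}(\mathcal M)$ for which $\Sigma_\tau(\Lambda, \mathcal M)$ is of exponential type $\gamma_0 < 0$ in $\mathsf Y_{p_0}(R_0)$. Because $\Lambda \in W_+(\Lambda)$, the converse part of Theorem \ref{TheoTranspTheta} yields that $\Sigma_\delta(\Lambda, \mathcal A)$ is of $(\Theta, \Lambda)$-exponential type $\gamma_0$; Theorem \ref{TheoP0EquivP1} then makes it of exponential type $\gamma_0 < 0$ in $\mathsf X_p^\delta$, hence exponentially stable, and Corollary \ref{CoroSilkRho0} converts this into $\mu(\Lambda, \mathfrak A) < 1$.

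I would then fix an arbitrary $L \in V_+(\Lambda)$ and traverse the correspondence in the forward direction. From $\mu(\Lambda, \mathfrak A) < 1$, the estimate \eqref{XiLeqRho} obtained in the proof of Theorem \ref{TheoMuLyap}\ref{MuLyap1} bounds $\abs{\widehat\Xi_{[\mathbf n], t}^{L, \Lambda, A}}$ by $C(\mu(\Lambda, \mathfrak A) + \varepsilon)^{\Lambda \cdot \mathbf n}$; choosing $\varepsilon$ with $\mu(\Lambda, \mathfrak A) + \varepsilon < 1$ and using $\Lambda \cdot \mathbf n \geq (\min_j \Lambda_j / L_j)\, L \cdot \mathbf n$ shows that $\Sigma_\delta(L, \mathcal A)$ is of $(\widehat\Xi, \Lambda)$-exponential type $(\min_j \Lambda_j / L_j)\ln\mu(\Lambda, \mathfrak A) < 0$. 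By shift-invariance, Theorem \ref{TheoStabShiftInv} upgrades this to $(\Theta, \Lambda)$-exponential type $\gamma < 0$, and the forward part of Theorem \ref{TheoTranspTheta} then gives that $\Sigma_\tau(L, \mathcal M)$ is exponentially stable in $\mathsf Y_p(R)$ for every $p \in [1, +\infty]$ and $R \in \mathrm{Inv}(\mathcal M)$. As $L \in V_+(\Lambda)$ was arbitrary, the second statement follows.

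The main obstacle is the asymmetry built into Theorem \ref{TheoTranspTheta} and into Theorems \ref{TheoP0EquivP1} and \ref{TheoStabShiftInv}: the passage ``coefficient-level decay $\Rightarrow$ transport stability'' is valid on all of $V_+(\Lambda)$, whereas the reverse passage, and likewise the conversion of $\mathsf X_p^\delta$-stability into $(\Theta, \Lambda)$- or $(\widehat\Xi, \Lambda)$-exponential type, is only guaranteed on the smaller set $W_+(\Lambda)$. One therefore cannot merely shuttle $\mathsf X_p^\delta$-stability back and forth for a general $L \in V_+(\Lambda)$; the decisive point is that $\mu(\Lambda, \mathfrak A) < 1$ furnishes the stronger, coefficient-level exponential decay of $\widehat\Xi$ uniformly over all $L \in V_+(\Lambda)$, which is exactly the hypothesis the forward direction of the transport correspondence requires.
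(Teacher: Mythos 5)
Your proof is correct, and it follows the route the paper intends: the paper states this corollary without a proof, presenting it as the transposition of Corollary \ref{CoroSilkRho0} to the transport setting via the correspondence of Theorem \ref{TheoTranspTheta}, which is exactly the chain you execute (including the observation that $\mathrm{Inv}(\mathcal M) \neq \emptyset$, needed for the trivial implication). The one point where you genuinely had to supply an idea is the $V_+(\Lambda)$-versus-$W_+(\Lambda)$ asymmetry you flag at the end, and your resolution via the uniform coefficient bound \eqref{XiLeqRho} from the proof of Theorem \ref{TheoMuLyap}, upgraded to $(\Theta,\Lambda)$-exponential type by Theorem \ref{TheoStabShiftInv}, is sound. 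Note, however, that this asymmetry can be dispatched without reaching inside the proof of Theorem \ref{TheoMuLyap}: for fixed $L \in V_+(\Lambda)$, Corollary \ref{CoroSilkRho0} already gives exponential stability of $\Sigma_\delta(L, \mathcal A)$ in $\mathsf X_p^\delta$, and since $L \in W_+(L)$ one may apply the converse part of Theorem \ref{TheoP0EquivP1} and then the forward part of Theorem \ref{TheoTranspTheta} with $L$ itself playing the role of the reference $N$-tuple $\Lambda$ --- the conclusion ``$\Sigma_\tau(L, \mathcal M)$ is exponentially stable in $\mathsf Y_p(R)$'' makes no reference to $\Lambda$, so nothing is lost in this re-indexing. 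Alternatively, the inequality $\norm{u(t)}_{p} \leq \norm{v_t}_{p}$ between a transport solution and its associated difference-equation solution, established in the proofs of the two corollaries preceding Theorem \ref{TheoTranspTheta}, converts $\mathsf X_p^\delta$-stability of $\Sigma_\delta(L,\mathcal A)$ directly into stability of $\Sigma_\tau(L,\mathcal M)$ on every $\mathsf Y_p(R)$. Either shortcut stays at the level of the stated results; your version has the minor cost of invoking a proof-internal estimate, but the merit of making the uniformity over $L \in V_+(\Lambda)$ explicit.
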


\begin{remark}
\label{RemkBClosed}
In accordance with Remark \ref{RemkAClosed}, the exponential stability of the system $\Sigma_\tau(\Lambda, \mathcal M)$ is equivalent to that of $\Sigma_\tau(\Lambda, L^\infty(\mathbb R, \overline{\mathfrak B}))$.
\end{remark}


\section{Wave propagation on networks}
\label{SecWave}

We consider here the problem of wave propagation on a finite network of elastic strings. The notations we use here come from \cite{Dager2006Wave}.

A \emph{graph} $\mathcal G$ is a pair $(\mathcal V, \mathcal E)$, where $\mathcal V$ is a set, whose elements are called \emph{vertices}, and
\[\mathcal E \subset \{\{q, p\} \;|\: q, p \in \mathcal V,\; q \not = p\}.\]
The elements of $\mathcal E$ are called \emph{edges}, and, for $e = \{q, p\} \in \mathcal E$, the vertices $q, p$ are called the \emph{endpoints} of $\mathcal E$. An \emph{orientation} on $\mathcal G$ is defined by two maps $\alpha, \omega: \mathcal E \to \mathcal V$ such that, for every $e \in \mathcal E$, $e = \{\alpha(e), \omega(e)\}$. Given $q, p \in \mathcal V$, a \emph{path} from $q$ to $p$ is a $n$-tuple $(q = q_1, \dotsc, q_n = p) \in \mathcal V^n$ where, for every $j \in \llbracket 1, n-1\rrbracket$, $\{q_j, q_{j+1}\} \in \mathcal E$. The positive integer $n$ is called the \emph{length} of the path. A path of length $n$ in $\mathcal G$ is said to be \emph{closed} if $q_1 = q_n$; \emph{simple} if all the edges $\{q_j, q_{j+1}\}$, $j \in \llbracket 1, n-1\rrbracket$, are different; and \emph{elementary} if the vertices $q_1, \dotsc, q_n$ are pairwise different, except possibly for the pair $(q_1, q_n)$. An elementary closed path is called a \emph{cycle}. A graph which does not admit cycles is called a \emph{tree}. We say that a graph $\mathcal G$ is \emph{connected} if, for every $q, p \in \mathcal V$, there exists a path from $q$ to $p$. We say that $\mathcal G$ is \emph{finite} if $\mathcal V$ is a finite set. For every $q \in \mathcal V$, we denote by $\mathcal E_q$ the set of edges for which $q$ is an endpoint, that is,
\[
\mathcal E_q = \{e \in \mathcal E \;|\: q \in e\}.
\]
The cardinality of $\mathcal E_q$ is denoted by $n_q$. We say that $q \in \mathcal V$ is \emph{exterior} if $\mathcal E_q$ contains at most one element and \emph{interior} otherwise. We denote by $\mathcal V_{\mathrm{ext}}$ and $\mathcal V_{\mathrm{int}}$ the set of exterior and interior vertices, respectively. We suppose in the sequel that $\mathcal V_{\mathrm{ext}}$ contains at least two elements, and we fix a nonempty subset $\mathcal V_{\mathrm d}$ of $\mathcal V_{\mathrm{ext}}$ such that $\mathcal V_{\mathrm u} = \mathcal V_{\mathrm{ext}} \setminus \mathcal V_{\mathrm d}$ is nonempty. The vertices of $\mathcal V_{\mathrm d}$ are said to be \emph{damped}, and the vertices of $\mathcal V_{\mathrm u}$ are said to be \emph{undamped}. Note that $\mathcal V$ is the disjoint union $\mathcal V = \mathcal V_{\mathrm{int}} \cup \mathcal V_{\mathrm u} \cup \mathcal V_{\mathrm d}$.

A \emph{network} is a pair $(\mathcal G, L)$ where $\mathcal G = (\mathcal V, \mathcal E)$ is an oriented graph and $L = (L_e)_{e \in \mathcal E}$ is a vector of positive real numbers, where each $L_e$ is called the \emph{length} of the edge $e$. We say that a network is \emph{finite} (respectively, \emph{connected}) if its underlying graph $\mathcal G$ is finite (respectively, connected). If $e \in \mathcal E$ and $u: [0, L_e] \to \mathbb C$ is a function, we write $u(\alpha(e)) = u(0)$ and $u(\omega(e)) = u(L_e)$. For every elementary path $(q_1, \dotsc, q_n)$, we define its \emph{signature} $s: \mathcal E \to \{-1, 0, 1\}$ by
\[
s(e) = 
\left\{
\begin{aligned}
& 1, & & \text{ if } e = \{q_i, q_{i+1}\} \text{ for some } i \in \llbracket 1, n-1\rrbracket \text{ and } \alpha(e) = q_i, \\
& -1, & & \text{ if } e = \{q_i, q_{i+1}\} \text{ for some } i \in \llbracket 1, n-1\rrbracket \text{ and } \alpha(e) = q_{i+1}, \\
& 0, & & \text{ otherwise}.
\end{aligned}
\right.
\]
The \emph{normal derivatives} of $u$ at $\alpha(e)$ and $\omega(e)$ are defined by $\frac{d u}{d n_e}(\alpha(e)) = - \frac{d u}{d x}(0)$ and $\frac{d u}{d n_e}(\omega(e)) = \frac{d u}{d x}(L_e)$.

In what follows, we consider only finite connected networks. In order to simplify the notations, we identify $\mathcal E$ with the finite set $\llbracket 1, N\rrbracket$, where $N = \# \mathcal E$. We model wave propagation along the edges of a finite connected network $(\mathcal G, L)$ by $N$ displacement functions $u_j: \left[0, +\infty\right) \times [0, L_j] \to \mathbb C$, $j \in \llbracket 1, N\rrbracket$, satisfying
\begin{multline}
\label{MainSystWave}
\Sigma_{\omega}(\mathcal G, L, \eta): \\
\left\{
\begin{aligned}
\frac{\partial^2 u_j}{\partial t^2}(t, x) & = \frac{\partial^2 u_j}{\partial x^2}(t, x), & & j \in \llbracket 1, N\rrbracket,\; t \in \left[0, +\infty\right),\; x \in [0, L_j], \\
u_j(t, q) & = u_k(t, q), & & q \in \mathcal V,\; j, k \in \mathcal E_q,\; t \in \left[0, +\infty\right), \\
\sum_{j \in \mathcal E_q} \frac{\partial u_j}{\partial n_j}(t, q) & = 0, & & q \in \mathcal V_{\mathrm{int}},\; t \in \left[0, +\infty\right), \\
\frac{\partial u_j}{\partial t}(t, q) & = - \eta_q(t) \frac{\partial u_j}{\partial n_j}(t, q), & & q \in \mathcal V_{\mathrm{d}},\; j \in \mathcal E_q,\; t \in \left[0, +\infty\right), \\
u_j(t, q) & = 0, & & q \in \mathcal V_{\mathrm{u}},\; j \in \mathcal E_q,\; t \in \left[0, +\infty\right).
\end{aligned}
\right.
\end{multline}
Each function $\eta_q$ is assumed to be nonnegative and determines the damping at the vertex $q \in \mathcal V_{\mathrm d}$. We denote by $\eta$ the vector-valued function $\eta = (\eta_q)_{q \in \mathcal V_{\mathrm d}}$.

\begin{remark}
Let $(\mathcal G, L)$ be a finite connected network with $\mathcal E = \llbracket 1, N\rrbracket$ and $(\alpha_1, \omega_1)$, $(\alpha_2, \omega_2)$ be two orientations of $\mathcal G$. If $(u_j)_{j \in \llbracket 1, N\rrbracket}$ satisfies \eqref{MainSystWave} with orientation $(\alpha_1, \omega_1)$ and $(v_j)_{j \in \llbracket 1, N\rrbracket}$ is given by $v_j = u_j$ if $\alpha_1(j) = \alpha_2(j)$ and $v_j(x) = u_j(L_j - x)$ otherwise, we can easily verify that $(v_j)_{j \in \llbracket 1, N\rrbracket}$ satisfies \eqref{MainSystWave} with orientation $(\alpha_2, \omega_2)$. Hence the dynamical properties of \eqref{MainSystWave} do not depend on the orientation of $\mathcal G$.
\end{remark}

For $p \in [1, +\infty]$, consider the Banach spaces $L^p(\mathcal G, L) = \prod_{j=1}^N L^p([0, L_j], \mathbb C)$ and
\begin{multline*}
W^{1, p}_0(\mathcal G, L) = \left\{(u_1, \dotsc, u_N) \in \prod_{j=1}^N W^{1, p}([0, L_j], \mathbb C) \;\middle|\: \right. \\ \left. u_j(q) = u_k(q),\; \forall q \in \mathcal V,\; \forall j, k \in \mathcal E_q; u_j(q) = 0,\; \forall q \in \mathcal V_{\mathrm u},\; \forall j \in \mathcal E_q\right\},
\end{multline*}
endowed with the usual norms
\[
\begin{aligned}
\norm{u}_{L^p(\mathcal G, L)} & =
\left\{
\begin{aligned}
& \left(\sum_{i=1}^N \norm{u_i}_{L^p([0, L_i], \mathbb C)}^p\right)^{\frac{1}{p}}, & & \text{ if } p \in \left[1, +\infty\right), \\
& \max_{i \in \llbracket 1, N\rrbracket} \norm{u_i}_{L^\infty([0, L_i], \mathbb C)}, & & \text{ if } p = +\infty,
\end{aligned}
\right. \\
\norm{u}_{W^{1, p}_0(\mathcal G, L)} & =
\left\{
\begin{aligned}
& \left(\sum_{i=1}^N \norm{u_i^\prime}_{L^p([0, L_i], \mathbb C)}^p\right)^{\frac{1}{p}}, & & \text{ if } p \in \left[1, +\infty\right), \\
& \max_{i \in \llbracket 1, N\rrbracket} \norm{u_i^\prime}_{L^\infty([0, L_i], \mathbb C)}, & & \text{ if } p = +\infty.
\end{aligned}
\right.
\end{aligned}
\]
We will omit $(\mathcal G, L)$ from the notations when it is clear from the context.

Let $\mathsf X_p^\omega = W^{1, p}_0 \times L^p$, endowed with the norm $\norm{\cdot}_{p}$ defined by
\[
\norm{(u, v)}_{p} = 
\left\{
\begin{aligned}
& \left(\norm{u}_{W^{1, p}_0(\mathcal G, L)}^p + \norm{v}_{L^p(\mathcal G, L)}^p\right)^{\frac{1}{p}}, & & \text{ if } p \in \left[1, +\infty\right), \\
& \max\left(\norm{u}_{W^{1, \infty}_0(\mathcal G, L)}, \norm{v}_{L^\infty(\mathcal G, L)}\right), & & \text{ if } p = +\infty,
\end{aligned}
\right.
\]
and, for every $t \in \mathbb R$, define the operator $A(t)$ by
\[
\begin{split}
D(A(t)) = & \left\{(u, v) \in \left(W^{1, p}_0 \cap \prod_{j=1}^N W^{2, p}([0, L_j], \mathbb C)\right) \times W^{1, p}_0 \;\middle|\: \right. \\
 & \hphantom{ \{ } \left. v_j(q) = - \eta_q(t) \frac{d u_j}{d n_j}(q),\; \forall q \in \mathcal V_{\mathrm d},\; \forall j \in \mathcal E_q;\; \sum_{j \in \mathcal E_q} \frac{d u_j}{d n_j}(q) = 0,\; \forall q \in \mathcal V_{\mathrm{int}}\right\},
\end{split}
\]
\[
A(t)
\begin{pmatrix}
u \\
v \\
\end{pmatrix} = 
\begin{pmatrix}
v \\
u^{\prime\prime}
\end{pmatrix}.
\]
One can then write \eqref{MainSystWave} as an evolution equation in $\mathsf X_p^\omega$ as
\begin{equation}
\label{WaveFunctional}
\dot U(t) = A(t) U(t)
\end{equation}
where $U = \left(u, \pad{u}{t}\right)$.

\subsection{Equivalence with a system of transport equations}

In order to make a connection with transport systems, we consider, for $p \in [1, +\infty]$, the Banach space
\[
\mathsf X_p^\tau = \prod_{j=1}^{2N} L^p([0, L_j^\tau], \mathbb C),
\]
where $L_{2j-1}^\tau = L_{2j}^\tau = L_j$ for $j \in \llbracket 1, N\rrbracket$.

\begin{definition}[D'Alembert decomposition operator]
Let $T: \mathsf X_p^\omega \to \mathsf X_p^\tau$ be the operator given by $T(u, v) = f$, where, for $j \in \llbracket 1, N\rrbracket$, $x \in [0, L_j]$,
\begin{equation}
\label{DefT}
f_{2j - 1}(x) = u_j^\prime(L_j - x) + v_j(L_j - x), \qquad f_{2j}(x) = u_j^\prime(x) - v_j(x).
\end{equation}
\end{definition}

In order to describe the range of $T$, we introduce the following notations. Let $r \in \mathbb N$ be the number of elementary paths $(q_1, \dotsc, q_n)$ in $\mathcal G$ with $q_1 = q_n$ or $q_1, q_n \in \mathcal V_u$. The set of such paths will be indexed by $\llbracket 1, r\rrbracket$. We denote by $s_i$ the signature of the path corresponding to the index $i \in \llbracket 1, r\rrbracket$. We define $R \in \mathcal M_{r, 2N}(\mathbb C)$ by its coefficients $\rho_{ij}$ given by
\[\rho_{i, 2j - 1} = \rho_{i, 2j} = s_i(j) \text{ for }i \in \llbracket 1, r\rrbracket,\; j \in \llbracket 1, N\rrbracket.\]
We then have the following proposition.

\begin{proposition}
\label{PropTOnto}
The operator $T$ is a bijection from $\mathsf X_p^\omega$ to $\mathsf Y_p(R)$. Moreover, $T$ and $T^{-1}$ are continuous.
\end{proposition}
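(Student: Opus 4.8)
The plan is to produce explicit inversion formulas for $T$ and to read bijectivity and bicontinuity off from them, the only genuinely structural point being that the integral constraints defining $\mathsf Y_p(R)$ are precisely the compatibility conditions needed to reconstruct an element of $W^{1, p}_0$. First I would invert the defining relations \eqref{DefT} pointwise: writing $y = L_j - x$ in the first line of \eqref{DefT} gives $u_j^\prime(y) + v_j(y) = f_{2j-1}(L_j - y)$, while the second reads $u_j^\prime(y) - v_j(y) = f_{2j}(y)$, whence
\[
u_j^\prime = \tfrac12\bigl(f_{2j-1}(L_j - \cdot) + f_{2j}\bigr), \qquad v_j = \tfrac12\bigl(f_{2j-1}(L_j - \cdot) - f_{2j}\bigr).
\]
These determine $v$ and the derivative of $u$ unambiguously in $L^p$. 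Since translations and reflections are isometries of $L^p$, both \eqref{DefT} and these inverse formulas give $\norm{\cdot}_p$-bounds term by term; because the $W^{1, p}_0$-norm is by definition the $L^p$-norm of the derivative, this yields $\norm{T(u, v)}_p \le C \norm{(u, v)}_p$ and, once surjectivity is in hand, the reverse bound for $T^{-1}$, with no Poincar\'e-type inequality needed.

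Injectivity is then immediate: $T(u, v) = 0$ forces $v = 0$ and $u_j^\prime = 0$, so $u$ is constant on each edge; continuity at the interior vertices together with connectedness of $\mathcal G$ makes $u$ globally constant, and this constant vanishes because $u$ vanishes at the (nonempty) vertices of $\mathcal V_{\mathrm u}$. The remaining and main step is surjectivity, i.e.\ recovering the primitive $u$ from its prescribed derivative. Integrating, $u_j$ is determined up to one additive constant per edge, namely its value at $\alpha(j)$; choosing these constants amounts to finding a potential $\phi\colon \mathcal V \to \mathbb C$ with $\phi(\omega(j)) - \phi(\alpha(j)) = \int_0^{L_j} u_j^\prime$ for every edge $j$ and $\phi \equiv 0$ on $\mathcal V_{\mathrm u}$. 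Such a $\phi$ exists if and only if the signed sum $\sum_j s_i(j) \int_0^{L_j} u_j^\prime$ vanishes along every elementary path that is closed or joins two undamped vertices, these being exactly the telescoping obstructions to well-definedness of $\phi$ and to the Dirichlet condition at the undamped vertices.

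The key computation I would carry out is that this is precisely the membership condition $f \in \mathsf Y_p(R)$: using $y = L_j - x$,
\[
\int_0^{L_j} f_{2j-1} + \int_0^{L_j} f_{2j} = \int_0^{L_j}\bigl(u_j^\prime + v_j\bigr) + \int_0^{L_j}\bigl(u_j^\prime - v_j\bigr) = 2\int_0^{L_j} u_j^\prime,
\]
so that the $i$-th constraint $\sum_j \rho_{ij}\bigl(\int f_{2j-1} + \int f_{2j}\bigr) = 0$, with $\rho_{i, 2j-1} = \rho_{i, 2j} = s_i(j)$, becomes exactly $\sum_j s_i(j) \int_0^{L_j} u_j^\prime = 0$. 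Given $f \in \mathsf Y_p(R)$ I would then build $\phi$ by propagating values along a spanning tree of $\mathcal G$, starting from one undamped vertex set to $0$; the cycle constraints guarantee independence of the chosen path and the undamped-vertex path constraints guarantee $\phi = 0$ on all of $\mathcal V_{\mathrm u}$. Setting $u_j(x) = \phi(\alpha(j)) + \int_0^x u_j^\prime$ produces the required preimage in $W^{1, p}_0$, and the derivative bounds above give continuity of $T^{-1}$.

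I expect the main obstacle to be the bookkeeping in this last construction: verifying that the finitely many elementary-path constraints encoded in $R$ really generate the full cycle space together with the full set of undamped-vertex compatibility relations, so that $\mathsf Y_p(R)$ is neither too large (which would break surjectivity onto $W^{1, p}_0$) nor too small (which would break $T$ mapping into $\mathsf Y_p(R)$). The forward inclusion $T(\mathsf X_p^\omega) \subset \mathsf Y_p(R)$ is the easy half, since for $(u, v) \in \mathsf X_p^\omega$ each signed sum equals $\phi(q_n) - \phi(q_1)$ for the genuine trace $\phi$ of $u$, which is $0$ on closed paths and on paths between undamped vertices; the delicate half is the converse consistency of the potential, which the spanning-tree argument is designed to settle.
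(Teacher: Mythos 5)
Your proposal is correct and follows essentially the same route as the paper: the same inversion formulas for $v_j$ and $u_j^\prime$, the same identity $\int_0^{L_j}\bigl(f_{2j-1}+f_{2j}\bigr)\,dx = 2\int_0^{L_j} u_j^\prime\,dx$ identifying the $\mathsf Y_p(R)$-constraints with the telescoping obstructions, and reconstruction of $u$ by integrating along elementary paths anchored at an undamped vertex. The paper instead writes down the candidate inverse $S$ explicitly (so injectivity comes for free from $S\circ T = \id$) and asserts path-independence of the reconstruction directly from the definition of $R$; your spanning-tree bookkeeping and separate connectedness-based injectivity argument are just a more explicit organization of those same steps.
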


\begin{proof}
Let $(u, v) \in \mathsf X_p^\omega$ and let $f = T (u, v) \in \mathsf X_p^\tau$. Let $(q_1, \dotsc, q_n)$ be an elementary path in $\mathcal G$ with $q_1 = q_n$ or $q_1, q_n \in \mathcal V_{\mathrm u}$ and let $s$ be its signature. For $i \in \llbracket 1, n-1\rrbracket$, let $j_i$ be the index corresponding to the edge $\{q_i, q_{i+1}\}$. We have
\[
\begin{split}
 & \sum_{j=1}^N s(j) \int_0^{L_j} \left(f_{2j-1}(x) + f_{2j}(x)\right) dx \\
{} = {} & 2 \sum_{j=1}^{N} s(j) \int_0^{L_{j}} u_{j}^\prime(x) dx = 2 \sum_{i=1}^{N} s(j) \left(u_{j}(L_{j}) - u_{j}(0)\right) \\
{} = {} & 2 \sum_{i=1}^{n-1} \left(u_{j_i}(q_{i+1}) - u_{j_i}(q_i)\right) = 2 \left(u_{j_{n-1}}(q_n) - u_{j_1}(q_1)\right) = 0,
\end{split}
\]
and thus $f \in \mathsf Y_p(R)$.

Conversely, take $f \in \mathsf Y_p(R)$. For $j \in \llbracket 1, N\rrbracket$, define $v_j: [0, L_j] \to \mathbb C$ by
\begin{equation}
\label{DefVj}
v_j(x) = \frac{f_{2j - 1}(L_j - x) - f_{2j}(x)}{2}.
\end{equation}
One clearly has $v_j \in L^p([0, L_j], \mathbb C)$. We define $u_j$ as follows: let $e \in \mathcal E$ be the edge corresponding to the index $j$. Let $(q_1, \dotsc, q_n)$ be any elementary path with $q_1 \in \mathcal V_{\mathrm u}$ and $q_n = \alpha(e)$. Let $s: \mathcal E \to \{-1, 0, 1\}$ be the signature of that path and, for $i \in \llbracket 1, n-1\rrbracket$, let $j_i$ be the index associated with the edge $\{q_{i}, q_{i+1}\}$. For $x \in [0, L_j]$, set
\begin{equation}
\label{DefUj}
u_j(x) = \sum_{i=1}^{n-1} s(j_i) \int_0^{L_{j_i}} \frac{f_{2j_i - 1}(\xi) + f_{2j_i}(\xi)}{2} d\xi + \int_0^x \frac{f_{2j - 1}(L_j - \xi) + f_{2j}(\xi)}{2} d\xi.
\end{equation}
This definition does not depend on the choice of the path $(q_1, \dotsc, q_n)$ with $q_1 \in \mathcal V_{\mathrm u}$ and $q_n = \alpha(e)$ thanks to the definition of the matrix $R$. It is an immediate consequence of \eqref{DefUj} that $(u, v) \in \mathsf X_p^\omega$. The map $f \mapsto (u, v)$ defines an operator $S: \mathsf Y_p(R) \to \mathsf X_p^\omega$. We clearly have $T \circ S = \id_{\mathsf Y_p(R)}$ and $S \circ T = \id_{\mathsf X_p^\omega}$, and thus $T$ is bijective. The continuity of $T$ and $S$ follows immediately from \eqref{DefT}, \eqref{DefVj}, and \eqref{DefUj}.
\end{proof}

\begin{remark}
\label{RemkTUnit}
When $p = 2$, one easily checks that $\frac{1}{\sqrt{2}} T: \mathsf X_2^\omega \to \mathsf Y_2(R)$ is unitary.
\end{remark}

\begin{remark}
The operator $T$ corresponds to the d'Alembert decomposition of the solutions of the one-dimensional wave equation into a pair of traveling waves moving in opposite directions. For every $j \in \llbracket 1, N\rrbracket$, $f_{2j-1}$ and $f_{2j}$ correspond to the waves moving from $\omega(j)$ to $\alpha(j)$ and from $\alpha(j)$ to $\omega(j)$, respectively (see Figure \ref{uniqueFigureDuPapier}).

\begin{figure}[ht]
\centering

\setlength{\unitlength}{1cm}

\begin{tikzpicture}
\draw[thick] (1, 1) -- (5, 1);
\draw[fill] (1, 1) circle [radius=0.1];
\draw[fill] (5, 1) circle [radius=0.1];
\node[left] at (1, 1) {$\alpha(j)$};
\node[right] at (5, 1) {$\omega(j)$};
\draw[thick, domain=1.5:4.5] plot (\x, {1 + 0.75 - 0.75/4*(\x - 3)^2});
\fill (3, 1 + 0.75) -- ++(-0.125, 0) -- ++(0.25, 0.125) -- ++(0, -0.25) -- ++(-0.25, 0.125);
\draw[thick, domain=1.5:4.5] plot (\x, {1 - 0.75 + 0.75/4*(\x - 3)^2});
\fill (3, 1 - 0.75) -- ++(0.125, 0) -- ++(-0.25, 0.125) -- ++(0, -0.25) -- ++(0.25, 0.125);
\node[above] at (3, 1 + 0.75) {$f_{2j-1}$};
\node[below] at (3, 1 - 0.75) {$f_{2j}$};
\end{tikzpicture}

\caption{D'Alembert decomposition of the wave equation on the edge $j \in \llbracket 1, N\rrbracket$.}
\label{uniqueFigureDuPapier}
\end{figure}
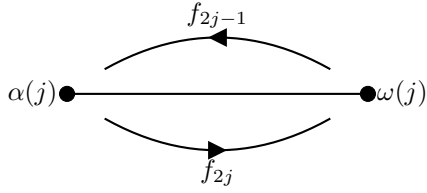
\end{remark}

Let us consider the operator $B(t)$ in $\mathsf Y_p(R)$ defined by conjugation as
\[
\begin{gathered}
D(B(t)) = \{f \in \mathsf Y_p(R) \;|\: T^{-1}f \in D(A(t))\}, \quad
B(t) f = T A(t) T^{-1} f.
\end{gathered}
\]
In order to give a more explicit formula for $B(t)$, we introduce the following notations.

\begin{definition}[Inward and outward decompositions]
The \emph{inward and outward decompositions} of $\mathbb C^{2N}$ are defined respectively as the direct sums
\[
\mathbb C^{2N} = \bigoplus_{q \in \mathcal V} W^q_{\mathrm{in}}, \qquad \mathbb C^{2N} = \bigoplus_{q \in \mathcal V} W^q_{\mathrm{out}},
\]
where, for every $q \in \mathcal V$, we set
\[
\begin{aligned}
W^{q}_{\mathrm{in}} & = \Span\left(\{e_{2j} \;|\: \omega(j) = q\} \cup \{e_{2j-1} \;|\: \alpha(j) = q\}\right), \\
W^{q}_{\mathrm{out}} & = \Span\left(\{e_{2j} \;|\: \alpha(j) = q\} \cup \{e_{2j-1} \;|\: \omega(j) = q\}\right).
\end{aligned}
\]
For every $q \in \mathcal V$, we denote by $\Pi^{q}_{\mathrm{in}}$ and $\Pi^q_{\mathrm{out}}$ the canonical projections of $\mathbb C^{2N}$ onto $W^q_{\mathrm{in}}$ and $W^q_{\mathrm{out}}$, respectively, which we identify with matrices in $\mathcal M_{n_q, 2N}(\mathbb C)$.
\end{definition}

For $n \in \mathbb N$, let $J_n$ denote the $n \times n$ matrix with all elements equal to $1$. Set $D = \diag((-1)^j)_{j \in \llbracket 1, 2N\rrbracket}$. For $q \in \mathcal V$ and $t \in \mathbb R$, we set
\[
M^q(t) = 
\left\{
\begin{aligned}
& \left(\Pi^q_{\mathrm{out}}\right)^{\mathrm{T}} \left(\id_{n_q} - \frac{2}{n_q} J_{n_q}\right) \Pi^q_{\mathrm{in}}, & & \text{ if } q \in \mathcal V_{\mathrm{int}}, \\
& \left(\Pi^q_{\mathrm{out}}\right)^{\mathrm{T}} \Pi^q_{\mathrm{in}}, & & \text{ if } q \in \mathcal V_{\mathrm u}, \\
& \frac{1 - \eta_q(t)}{1 + \eta_q(t)}\left(\Pi^q_{\mathrm{out}}\right)^{\mathrm{T}} \Pi^q_{\mathrm{in}}, & & \text{ if } q \in \mathcal V_{\mathrm d}.
\end{aligned}
\right.
\]
We define the time-dependent matrix $M = (m_{ij})_{i, j \in \llbracket 1, 2N\rrbracket}$ by
\begin{equation}
\label{DefM}
M = - D \left(\sum_{q \in \mathcal V} M^{q}\right) D.
\end{equation}

\begin{remark}
If the components of $\eta$ are nonnegative measurable functions, then $M$ is measurable and its components take values in $[-1, 1]$.
\end{remark}

\begin{remark}
\label{RemkMatM}
Notice that $W^{q_1}_{\mathrm{in}}$ and $W^{q_2}_{\mathrm{in}}$ are orthogonal whenever $q_1 \not = q_2$, and similarly for the outward decomposition. Moreover, for each $q \in \mathcal V$, the spaces $W^q_{\mathrm{in}}$ and $W^q_{\mathrm{out}}$ are invariant under $D$. We finally notice that the image of $M^q(t)$ is contained in $W^q_{\mathrm{out}}$. From these observations, we deduce that, for every $q \in \mathcal V$ and $t \in \mathbb R$,
\[\Pi^q_{\mathrm{out}} D M(t) = -\Pi^q_{\mathrm{out}} M^q(t) D.\]
\end{remark}

We finally obtain the following expression for $B(t)$.

\begin{proposition}
For $t \in \mathbb R$ and $p \in [1, +\infty]$, the operator $B(t)$ is given by
\begin{multline}
\label{DomBtF}
D(B(t)) = \left\{f \in \mathsf Y_p(R) \cap \prod_{i=1}^{2N} W^{1, p}([0, L_i^\tau], \mathbb C) \;\middle|\:\right. \\ \left. f_i(0) = \sum_{j=1}^{2N} m_{ij}(t) f_j(L_j^\tau),\; \forall i \in \llbracket 1, 2N\rrbracket\right\},
\end{multline}
\begin{equation}
\label{BtFeqFPrime}
B(t) f = -f^\prime.
\end{equation}
\end{proposition}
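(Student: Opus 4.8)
The plan is to compute $B(t) = T A(t) T^{-1}$ separately on its action and on its domain, reducing everything to the explicit formula \eqref{DefT} for $T$ and to the inverse formulas \eqref{DefVj}--\eqref{DefUj} obtained in Proposition \ref{PropTOnto}. The action \eqref{BtFeqFPrime} is the quick part. Taking $f = T(u, v)$ with $(u,v) \in D(A(t))$ and using $A(t)(u,v) = (v, u^{\prime\prime})$, one has $B(t) f = T(v, u^{\prime\prime})$, whose components are $v_j^\prime(L_j - x) + u_j^{\prime\prime}(L_j - x)$ and $v_j^\prime(x) - u_j^{\prime\prime}(x)$. Differentiating \eqref{DefT} directly gives $-f_{2j-1}^\prime(x) = u_j^{\prime\prime}(L_j - x) + v_j^\prime(L_j - x)$ and $-f_{2j}^\prime(x) = v_j^\prime(x) - u_j^{\prime\prime}(x)$, whence $B(t) f = -f^\prime$.

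For the domain I would split the defining conditions of $D(A(t))$ into three groups and match them to the three requirements in \eqref{DomBtF}. The regularity $u \in \prod_j W^{2, p}([0, L_j], \mathbb C)$ and $v \in \prod_j W^{1, p}([0, L_j], \mathbb C)$ is equivalent to $f \in \prod_i W^{1, p}([0, L_i^\tau], \mathbb C)$: the implication $\Rightarrow$ is immediate from \eqref{DefT}, and $\Leftarrow$ follows since \eqref{DefVj} gives $v_j \in W^{1, p}$ while \eqref{DefUj} gives $u_j^\prime \in W^{1, p}$, hence $u_j \in W^{2, p}$. Moreover the conditions that $u$ be continuous across each vertex and vanish on $\mathcal V_{\mathrm u}$ (that is, $u \in W^{1, p}_0$) are precisely those already encoded in the membership $f \in \mathsf Y_p(R)$, by Proposition \ref{PropTOnto}. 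It then remains to prove that the pointwise transmission conditions---continuity of $v$ and the Kirchhoff law at interior vertices, the Dirichlet condition on $v$ at undamped vertices, and the damping relation $v_j(q) = -\eta_q(t)\frac{d u_j}{d n_j}(q)$ at damped vertices---are together equivalent to $f_i(0) = \sum_{j=1}^{2N} m_{ij}(t) f_j(L_j^\tau)$ for all $i$.

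For this last and main step I would first evaluate the boundary traces of $f$ from \eqref{DefT}. Writing $d_j^q = \frac{d u_j}{d n_j}(q)$ and $v_j^q = v_j(q)$, the endpoint values of $f$ express exactly the outgoing combinations $d_j^q + v_j^q$ and the incoming combinations $d_j^q - v_j^q$: up to signs recorded by $D = \diag((-1)^j)_{j}$, the vector $f(0)$ collects the outgoing data at each vertex (its components lie in $\bigoplus_q W^q_{\mathrm{out}}$), while the vector with components $f_i(L_i^\tau)$ collects the incoming data (its components lie in $\bigoplus_q W^q_{\mathrm{in}}$). Applying $D$ to $f(0) = M(t) f(L^\tau)$ and projecting with $\Pi^q_{\mathrm{out}}$, the identity $\Pi^q_{\mathrm{out}} D M(t) = -\Pi^q_{\mathrm{out}} M^q(t) D$ of Remark \ref{RemkMatM} together with $\Pi^q_{\mathrm{out}} (\Pi^q_{\mathrm{out}})^{\mathrm T} = \id_{n_q}$ reduces the single vector identity to the vertex-by-vertex scattering relation
\[
\left(d_j^q + v_j^q\right)_{j \in \mathcal E_q} = C^q(t) \left(d_j^q - v_j^q\right)_{j \in \mathcal E_q},
\]
where $C^q(t)$ equals $\id_{n_q} - \frac{2}{n_q} J_{n_q}$, $\id_{n_q}$, or $\frac{1 - \eta_q(t)}{1 + \eta_q(t)} \id_{n_q}$ according to whether $q \in \mathcal V_{\mathrm{int}}$, $q \in \mathcal V_{\mathrm u}$, or $q \in \mathcal V_{\mathrm d}$. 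A direct algebraic check of this relation at each vertex type recovers the transmission conditions: the interior case forces $v_j^q$ to be independent of $j$ (continuity of $v$) and $\sum_{j \in \mathcal E_q} d_j^q = 0$ (Kirchhoff); the undamped case gives $v_j^q = 0$; and the damped case gives $v_j^q + \eta_q(t) d_j^q = 0$. Assembling the three groups yields \eqref{DomBtF}.

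I expect the main obstacle to be purely a matter of careful bookkeeping in this last step: keeping track of the orientation-dependent distinction between the endpoints $\alpha(j)$ and $\omega(j)$, of the sign matrix $D$, and of the fact that $\Pi^q_{\mathrm{in}}$ and $\Pi^q_{\mathrm{out}}$ must be indexed consistently by the edges of $\mathcal E_q$, so that the all-ones matrix $J_{n_q}$ produces exactly the Kirchhoff coupling. Once the trace computation and the identity from Remark \ref{RemkMatM} are in hand, the remaining verification is routine.
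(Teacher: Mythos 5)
Your proposal is correct and follows essentially the same route as the paper: the same differentiation of \eqref{DefT} for the action, the same regularity equivalence via the inverse formulas of Proposition \ref{PropTOnto}, and the same reduction of $F_0 = M(t)F_L$ through $D$, the projections $\Pi^q_{\mathrm{out}}$, and Remark \ref{RemkMatM} to a vertex-by-vertex identity, followed by the same case analysis at interior, undamped, and damped vertices. Your uniform packaging of the three cases into a single scattering matrix $C^q(t)$ is merely a cosmetic reorganization of the paper's computation, and your boundary-trace and sign bookkeeping (including $\Pi^q_{\mathrm{out}}(\Pi^q_{\mathrm{out}})^{\mathrm{T}} = \id_{n_q}$) checks out.
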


\begin{proof}
Let $f \in \mathsf Y_p(R)$ and $(u, v) = T^{-1} f \in \mathsf X_p^\omega$ and notice that
\begin{equation}
\label{ExplicitTminus1}
u_j^\prime(x) = \frac{f_{2j-1}(L_j - x) + f_{2j}(x)}{2}, \qquad v_j(x) = \frac{f_{2j-1}(L_j - x) - f_{2j}(x)}{2}.
\end{equation}
It follows from \eqref{DefT} and \eqref{ExplicitTminus1} that $f_i \in W^{1, p}([0, L_i^\tau], \mathbb C)$ for every $i \in \llbracket 1, 2N\rrbracket$ if and only if $u_i \in W^{2, p}([0, L_i], \mathbb C)$ and $v_i \in W^{1, p}([0, L_i], \mathbb C)$ for every $i \in \llbracket 1, N\rrbracket$.

We suppose from now on that $f_i \in W^{1, p}([0, L_i^\tau], \mathbb C)$ for every $i \in \llbracket 1, 2N\rrbracket$. Let $F_0 = (f_i(0))_{i \in \llbracket 1, 2N\rrbracket}$ and $F_L = (f_i(L_i^\tau))_{i \in \llbracket 1, 2N\rrbracket}$. The condition
\begin{equation}
\label{ConditionF}
f_i(0) = \sum_{j=1}^{2N} m_{ij}(t) f_j(L_j^\tau), \qquad \forall i \in \llbracket 1, 2N\rrbracket
\end{equation}
can be written as $F_0 = M(t) F_L$. Thanks to the outward decomposition of $\mathbb C^{2N}$, this is equivalent to $\Pi^q_{\mathrm{out}} D F_0 = \Pi^q_{\mathrm{out}} D M(t) F_L$ for every $q \in \mathcal V$. By Remark \ref{RemkMatM}, we have $\Pi^q_{\mathrm{out}} D M(t) = -\Pi^q_{\mathrm{out}} M^q(t) D$, and thus \eqref{ConditionF} is equivalent to
\begin{equation}
\label{EqProjQ}
\Pi^q_{\mathrm{out}} D F_0 + \Pi^q_{\mathrm{out}} M^q(t) D F_L = 0, \qquad \forall q \in \mathcal V.
\end{equation}

If $q \in \mathcal V_{\mathrm d}$, let $j$ be the index corresponding to the unique edge in $\mathcal E_q$. To simplify the notations, we consider here the case $\alpha(j) = q$, the other case being analogous. Then
\[
\begin{split}
 & \Pi^q_{\mathrm{out}} D F_0 + \Pi^q_{\mathrm{out}} M^q(t) D F_L = \Pi^q_{\mathrm{out}} D F_0 + \frac{1 - \eta_q(t)}{1 + \eta_q(t)} \Pi^q_{\mathrm{in}} D F_L \\
 = & f_{2j}(0) - \frac{1 - \eta_q(t)}{1 + \eta_q(t)} f_{2j-1}(L_j) = u^\prime_j(0) - v_j(0) - \frac{1 - \eta_q(t)}{1 + \eta_q(t)} \left(u_j^\prime(0) + v_j(0)\right) \\
 = & \frac{2}{1 + \eta_q(t)} \left(\eta_q(t) u_j^\prime(0) - v_j(0)\right),
\end{split}
\]
which shows that the left-hand side is equal to zero if and only if one has $v_j(q) = - \eta_q(t) \frac{d u_j}{d n_j}(q)$. If $q \in \mathcal V_{\mathrm u}$, the same argument shows that the left-hand side is equal to zero if and only if $v_j(q) = 0$.

Finally, if $q \in \mathcal V_{\mathrm{int}}$, one easily obtains that
\[
\Pi_{\mathrm{in}}^q D F_L = 
\begin{pmatrix}
\frac{d u_j}{d n_j}(q) - v_j(q)
\end{pmatrix}_{j \in \mathcal E_q}, \qquad
\Pi_{\mathrm{out}}^q D F_0 = 
\begin{pmatrix}
-\frac{d u_j}{d n_j}(q) - v_j(q)
\end{pmatrix}_{j \in \mathcal E_q}.
\]
Since $\Pi^q_{\mathrm{out}} \left(\Pi^q_{\mathrm{out}}\right)^{\mathrm{T}} = \id_{W_{\mathrm{out}}^q}$, one has
\[
\begin{split}
& \Pi^q_{\mathrm{out}} D F_0 + \Pi^q_{\mathrm{out}} M^q(t) D F_L \\
 = & \begin{pmatrix}
-\frac{d u_j}{d n_j}(q) - v_j(q)
\end{pmatrix}_{j \in \mathcal E_q} + \left(\id_{n_q} - \frac{2}{n_q} J_{n_q}\right)
\begin{pmatrix}
\frac{d u_j}{d n_j}(q) - v_j(q)
\end{pmatrix}_{j \in \mathcal E_q} \\
 = & 
\begin{pmatrix}
- 2 v_j(q) - \frac{2}{n_q} \sum_{k \in \mathcal E_q} \left(\frac{d u_k}{d n_k}(q) - v_k(q)\right)
\end{pmatrix}_{j \in \mathcal E_q}.
\end{split}
\]
The right-hand side is equal to zero if and only if $v_j(q) = v_k(q)$ for every $j, k \in \mathcal E_q$ and $\sum_{k \in \mathcal E_q} \frac{d u_k}{d n_k}(q) = 0$.

Collecting all the equivalences corresponding to the identities in \eqref{EqProjQ}, we conclude that \eqref{DomBtF} holds.

Let now $f \in D(B(t))$ and denote $(u, v) = T^{-1} f \in D(A(t))$, $g = B(t) f$. Then
\[
g = T A(t) T^{-1} f = T A(t) (u, v) = T (v, u^{\prime\prime}),
\]
and so, by \eqref{DefT}, for every $j \in \llbracket 1, 2N\rrbracket$,
\begin{align*}
g_{2j-1}(x) & = v_j^\prime(L_j - x) + u_j^{\prime\prime}(L_j - x) \\
 & = - \frac{d}{dx} \left(v_j(L_j - x) + u_j^{\prime}(L_j - x)\right) = - f_{2j-1}^\prime(x), \\
g_{2j}(x) & = v_j^\prime(x) - u_j^{\prime\prime}(x) = \frac{d}{dx} \left(v_j(x) - u_j^\prime(x)\right) = -f_{2j}^\prime(x),
\end{align*}
which shows that \eqref{BtFeqFPrime} holds.
\end{proof}

The operator $T: \mathsf X_p^\omega \to \mathsf Y_p(R)$ transforms \eqref{WaveFunctional} into
\[
\dot F(t) = B(t) F(t).
\]
This evolution equation corresponds to the system of transport equations
\begin{equation}
\label{EquivSystTransport}
\left\{
\begin{aligned}
 & \pad{f_i}{t}(t, x) + \pad{f_i}{x}(t, x) = 0, & \qquad & i \in \llbracket 1, 2N\rrbracket,\; t \in \left[0, +\infty\right),\; x \in [0, L_i^\tau], \\
 & f_i(t, 0) = \sum_{j=1}^{2N} m_{ij}(t) f_j(t, L_j^\tau), & & i \in \llbracket 1, 2N\rrbracket,\; t \in \left[0, +\infty\right),
\end{aligned}
\right.
\end{equation}
where $F(t) = (f_i(t))_{i \in \llbracket 1, 2N\rrbracket}$. The following property of the matrix $M(t)$ will be useful in the sequel.

\begin{lemma}
\label{LemmMAlmostOrth}
For every $t \in \mathbb R$,
\[M(t)^{\mathrm{T}} M(t) = \id_{2N} - \sum_{q \in \mathcal V_{\mathrm d}} \frac{4 \eta_q(t)}{(1 + \eta_q(t))^2} (\Pi_\mathrm{in}^q)^{\mathrm{T}} \Pi_{\mathrm{in}}^q.\]
\end{lemma}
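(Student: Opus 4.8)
The plan is to expand $M(t)^{\mathrm{T}} M(t)$ directly from the definition \eqref{DefM}, exploiting three structural facts: that $D$ is a symmetric involution ($D^{\mathrm{T}} = D$ and $D^2 = \id_{2N}$), that the inward and outward decompositions are orthogonal \emph{coordinate} decompositions (so the associated projectors are diagonal and satisfy the relations recorded in Remark \ref{RemkMatM}), and that each local matrix $M^q(t)$ maps $W^q_{\mathrm{in}}$ into $W^q_{\mathrm{out}}$. First I would use $D^{\mathrm{T}} = D$ together with $D^2 = \id_{2N}$ to reduce the claim to computing $\big(\sum_{q \in \mathcal V}(M^q)^{\mathrm{T}}\big)\big(\sum_{q' \in \mathcal V} M^{q'}\big)$, since $M(t)^{\mathrm{T}} M(t) = D\,\big(\sum_q (M^q)^{\mathrm{T}}\big)\big(\sum_{q'} M^{q'}\big)\,D$, conjugation by $D$ being dealt with only at the very end.

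Next I would write each $M^q(t)$ in the common form $(\Pi^q_{\mathrm{out}})^{\mathrm{T}} C_q \Pi^q_{\mathrm{in}}$, where $C_q = \id_{n_q} - \frac{2}{n_q} J_{n_q}$ for $q \in \mathcal V_{\mathrm{int}}$, $C_q = \id_{n_q}$ for $q \in \mathcal V_{\mathrm u}$, and $C_q = \frac{1 - \eta_q(t)}{1 + \eta_q(t)} \id_{n_q}$ for $q \in \mathcal V_{\mathrm d}$. Then $(M^q)^{\mathrm{T}} M^{q'} = (\Pi^q_{\mathrm{in}})^{\mathrm{T}} C_q^{\mathrm{T}} \big(\Pi^q_{\mathrm{out}} (\Pi^{q'}_{\mathrm{out}})^{\mathrm{T}}\big) C_{q'} \Pi^{q'}_{\mathrm{in}}$, and because the outward decomposition is orthogonal one has $\Pi^q_{\mathrm{out}} (\Pi^{q'}_{\mathrm{out}})^{\mathrm{T}} = \delta_{qq'} \id_{n_q}$, so all cross terms vanish and only the diagonal contributions $(\Pi^q_{\mathrm{in}})^{\mathrm{T}} C_q^{\mathrm{T}} C_q \Pi^q_{\mathrm{in}}$ survive.

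The heart of the computation is then $C_q^{\mathrm{T}} C_q$. For $q \in \mathcal V_{\mathrm u}$ this is $\id_{n_q}$; for $q \in \mathcal V_{\mathrm d}$ it is $\big(\frac{1 - \eta_q(t)}{1 + \eta_q(t)}\big)^2 \id_{n_q}$; and for $q \in \mathcal V_{\mathrm{int}}$ I would use the identity $J_{n_q}^2 = n_q J_{n_q}$ to verify that $\big(\id_{n_q} - \frac{2}{n_q} J_{n_q}\big)^2 = \id_{n_q}$, i.e. the interior transmission block is an \emph{orthogonal reflection}. Substituting, and using that $\sum_{q \in \mathcal V} (\Pi^q_{\mathrm{in}})^{\mathrm{T}} \Pi^q_{\mathrm{in}} = \id_{2N}$ (the inward decomposition being an orthogonal direct sum of the whole space), I would add and subtract the damped-vertex projectors to obtain $\sum_{q \in \mathcal V}(M^q)^{\mathrm{T}} M^q = \id_{2N} + \sum_{q \in \mathcal V_{\mathrm d}} \big[\big(\frac{1-\eta_q(t)}{1+\eta_q(t)}\big)^2 - 1\big] (\Pi^q_{\mathrm{in}})^{\mathrm{T}} \Pi^q_{\mathrm{in}}$, and the elementary simplification $\big(\frac{1-\eta_q}{1+\eta_q}\big)^2 - 1 = -\frac{4\eta_q}{(1+\eta_q)^2}$ produces the bracketed coefficient. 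Finally I would conjugate by $D$: since each $W^q_{\mathrm{in}}$ is a coordinate subspace, $(\Pi^q_{\mathrm{in}})^{\mathrm{T}} \Pi^q_{\mathrm{in}}$ is a diagonal projector and hence commutes with the diagonal matrix $D$, so $D(\cdots)D = (\cdots)D^2 = (\cdots)$ leaves both $\id_{2N}$ and each projector unchanged, yielding the stated formula.

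I expect the only genuinely non-bookkeeping step to be the verification that the interior-vertex block $\id_{n_q} - \frac{2}{n_q} J_{n_q}$ squares to the identity; everything else follows mechanically from the orthogonality of the two coordinate decompositions, the identity $\sum_{q}(\Pi^q_{\mathrm{in}})^{\mathrm{T}}\Pi^q_{\mathrm{in}} = \id_{2N}$, and the involutivity and $D$-invariance properties recorded in Remark \ref{RemkMatM}.
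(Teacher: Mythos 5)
Your proposal is correct and follows essentially the same route as the paper's proof: writing each $M^q(t)$ as $(\Pi^q_{\mathrm{out}})^{\mathrm{T}} C_q \Pi^q_{\mathrm{in}}$, killing the cross terms via $\Pi^{q}_{\mathrm{out}} (\Pi^{q'}_{\mathrm{out}})^{\mathrm{T}} = \delta_{qq'} \id_{n_q}$, checking that each block satisfies $C_q^{\mathrm{T}} C_q = \lambda_q(t)^2 \id_{n_q}$ (with the interior block an involution thanks to $J_{n_q}^2 = n_q J_{n_q}$), and disposing of the conjugation by $D$ because the resulting matrix is diagonal. The only difference is organizational—you strip off the $D$'s at the outset rather than at the end—which is immaterial.
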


\begin{proof}
Notice that, for every $q \in \mathcal V$, $M^q(t)$ can be written as
\[
M^q(t) = (\Pi_{\mathrm{out}}^q)^{\mathrm{T}} \left(\lambda_q(t) \id_{n_q} - \frac{2}{n_q} \delta_q J_{n_q}\right) \Pi_{\mathrm{in}}^q,
\]
where $\lambda_q(t) = \frac{1 - \eta_q(t)}{1 + \eta_q(t)}$ if $q \in \mathcal V_{\mathrm d}$ and $\lambda_q(t) = 1$ otherwise, while $\delta_q = 1$ if $q \in \mathcal V_{\mathrm{int}}$ and $\delta_q = 0$ otherwise. By a straightforward computation, one verifies that, for every $q \in \mathcal V$,
\[\left(\lambda_q(t) \id_{n_q} - \frac{2}{n_q} \delta_q J_{n_q}\right)^{\mathrm{T}} \left(\lambda_q(t) \id_{n_q} - \frac{2}{n_q} \delta_q J_{n_q}\right) = \lambda_q(t)^2 \id_{n_q}.\]
Noticing furthermore that, for every $q_1, q_2 \in \mathcal V$, $\Pi_{\mathrm{out}}^{q_1} (\Pi_{\mathrm{out}}^{q_2})^{\mathrm{T}} = \delta_{q_1 q_2} \id_{W_{\mathrm{out}}^{q_1}}$, one deduces that
\[M(t)^{\mathrm{T}} M(t) = D \left[\sum_{q \in \mathcal V} \lambda_q(t)^2 (\Pi_{\mathrm{in}}^{q})^{\mathrm{T}} \Pi_{\mathrm{in}}^{q}\right] D.\]
Since the term between brackets in the above equation is diagonal and $\lambda_q(t)^2 = 1 - \frac{4 \eta_q(t)}{(1 + \eta_q(t))^2}$ for $q \in \mathcal V_{\mathrm d}$, the conclusion follows.
\end{proof}

\subsection{Existence of solutions}

Thanks to the operator $T: \mathsf X_p^\omega \to \mathsf Y_p(R)$, one can give the following definition for solutions of \eqref{MainSystWave}.

\begin{definition}
Let $U_0 \in \mathsf X_p^\omega$ and $\eta = (\eta_q)_{q \in \mathcal V_{\mathrm d}}$ be a measurable function with nonnegative components. We say that $U: \mathbb R_+ \to \mathsf X_p^\omega$ is a \emph{solution} of $\Sigma_\omega(\mathcal G, L, \eta)$ with initial condition $U_0$ if $T^{-1}U: \mathbb R_+ \to \mathsf Y_p(R)$ is a solution of \eqref{EquivSystTransport} with initial condition $T^{-1} U_0 \in \mathsf Y_p(R)$.
\end{definition}

For every $F_0 \in \mathsf Y_p(R)$, it follows from Proposition \ref{PropTranspWellPosed} that \eqref{EquivSystTransport} admits a unique solution $F: \mathbb R_+ \to \mathsf X_p^\tau$. In order to show that this solution remains in $\mathsf Y_p(R)$ for every $t \geq 0$, one needs to show that $\mathsf Y_p(R)$ is invariant under the flow of \eqref{EquivSystTransport}.

\begin{proposition}
\label{PropRMeqR}
For every $t \in \mathbb R$, $R M(t) = R$.
\end{proposition}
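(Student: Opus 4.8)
The plan is to prove the stronger matrix identity $R(M(t)-\id_{2N}) = 0$, working one row of $R$ at a time. Fix an index $i \in \llbracket 1, r \rrbracket$, let $s = s_i$ be the signature of the corresponding elementary path, and let $\rho = (\rho_{ij})_{j \in \llbracket 1, 2N\rrbracket}$ be the associated row, so that $\rho_{2j-1} = \rho_{2j} = s(j)$. Since $D^2 = \id_{2N}$, multiplying the desired identity $\rho M(t) = \rho$ on the right by $D$ and using \eqref{DefM} shows it is equivalent to $\sum_{q \in \mathcal V} \sigma M^q(t) = -\sigma$, where $\sigma = \rho D$, so that $\sigma_{2k-1} = -s(k)$ and $\sigma_{2k} = s(k)$. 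Writing $M^q(t) = (\Pi^q_{\mathrm{out}})^{\mathrm T} K^q(t) \Pi^q_{\mathrm{in}}$ with $K^q(t) = \lambda_q(t)\id_{n_q} - \frac{2}{n_q}\delta_q J_{n_q}$ as in the proof of Lemma \ref{LemmMAlmostOrth}, each summand $\sigma M^q(t)$ is supported on the inward directions $W^q_{\mathrm{in}}$. Since every basis vector $e_\ell$ is inward at exactly one vertex (namely $e_{2j}$ at $\omega(j)$ and $e_{2j-1}$ at $\alpha(j)$), the spaces $W^q_{\mathrm{in}}$ partition $\mathbb C^{2N}$, and the identity decouples into one condition per vertex:
\[
\bigl(\sigma (\Pi^q_{\mathrm{out}})^{\mathrm T}\bigr) K^q(t) = -\,\sigma (\Pi^q_{\mathrm{in}})^{\mathrm T}, \qquad q \in \mathcal V.
\]

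First I would record the bookkeeping fact that, for every vertex $q$ and every edge $j \in \mathcal E_q$, the inward and outward components of $\sigma$ at $q$ are opposite: if $\alpha(j)=q$, the inward slot is $e_{2j-1}$ and the outward slot is $e_{2j}$, giving $\sigma^{\mathrm{in}}_j = -s(j) = -\sigma^{\mathrm{out}}_j$, and the case $\omega(j)=q$ is symmetric. Setting $a = \sigma(\Pi^q_{\mathrm{out}})^{\mathrm T} = (a_j)_{j \in \mathcal E_q}$ (with the same edge-ordering used for both slots), this yields $\sigma(\Pi^q_{\mathrm{in}})^{\mathrm T} = -a$, so the vertex condition becomes simply $a K^q(t) = a$. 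For an exterior undamped vertex, $n_q = 1$ and $K^q(t) = 1$, so this is trivial. For an exterior damped vertex, $n_q = 1$ and $K^q(t) = \lambda_q(t)$, so we need $(\lambda_q(t)-1)a = 0$, i.e. $s(j) = 0$ for the unique incident edge $j$. For an interior vertex, $a K^q(t) = a - \frac{2}{n_q}\bigl(\sum_{j \in \mathcal E_q} a_j\bigr)\mathbf 1$, so the condition reduces to the single scalar balance $\sum_{j \in \mathcal E_q} a_j = 0$, where $a_j = s(j)$ if $\alpha(j)=q$ and $a_j = -s(j)$ if $\omega(j)=q$.

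The combinatorial heart of the argument, and the step I expect to require the most care, is verifying these two surviving conditions from the hypothesis that the path indexed by $i$ is closed or has both endpoints in $\mathcal V_{\mathrm u}$. For the damped case, I would argue that a damped vertex $q \in \mathcal V_{\mathrm d} \subseteq \mathcal V_{\mathrm{ext}}$ has $n_q = 1$, whereas any vertex lying on such a path is incident to two of its edges (an intermediate vertex, or the repeated endpoint of a cycle) or else is an undamped endpoint; hence $q$ never appears on the path and its incident edge satisfies $s(j)=0$. For an interior vertex $q$ on the path, exactly the two consecutive path-edges at $q$ carry nonzero signature, and a direct inspection of the four orientation cases shows that the entering edge contributes $a_j = -1$ while the leaving edge contributes $a_j = +1$ (this is precisely the telescoping that makes the integral constraints defining $\mathsf Y_p(R)$ path-independent in the proof of Proposition \ref{PropTOnto}); interior vertices off the path contribute only zeros. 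Thus $\sum_{j \in \mathcal E_q} a_j = 0$ in every case, all vertex conditions hold, and $R M(t) = R$ follows.
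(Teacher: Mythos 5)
Your proof is correct and follows essentially the same route as the paper's: both reduce to a single row of $R$, decouple the identity vertex-by-vertex through the inward/outward decomposition, and close with the same case analysis (interior vertices on the path see exactly one entering and one leaving edge contributing $-1$ and $+1$, so the $J_{n_q}$ term acts on a zero-sum vector; undamped endpoints are trivial; damped vertices never lie on the paths defining $R$). The only difference is bookkeeping — you conjugate by $D$ and reduce to the per-vertex scalar conditions $aK^q(t)=a$, while the paper writes $RD$ in telescoping form and verifies \eqref{RMeqMReduced} — but the underlying computation is identical.
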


\begin{proof}
Thanks to the inward decomposition of $\mathbb C^{2N}$, we prove the proposition by showing that for every $q \in \mathcal V$ and $t \in \mathbb R$,
\begin{equation}
\label{RDMeqRDProj}
- R D (\Pi_{\mathrm{out}}^q)^{\mathrm{T}} \left[\lambda_q(t) \id_{n_q} - \frac{2}{n_q} \delta_q J_{n_q}\right] = R D (\Pi_{\mathrm{in}}^q)^{\mathrm{T}},
\end{equation}
where $\lambda_q(t)$ and $\delta_q$ are defined as in the proof of Lemma \ref{LemmMAlmostOrth}. Without loss of generality, it is enough to consider the case where $R$ is a line matrix, i.e., we consider a single elementary path $(q_1, \dotsc, q_n)$ in $\mathcal G$ with $q_1 = q_n$ or $q_1, q_n \in \mathcal V_{\mathrm u}$, with signature $s$. Then $R = (\rho_{j})_{j \in \llbracket 1, 2N\rrbracket}$ is given by $\rho_{2j-1} = \rho_{2j} = s(j)$ for $j \in \llbracket 1, N\rrbracket$. For $i \in \llbracket 1, n-1\rrbracket$, denote by $j_i$ the edge corresponding to $\{q_i, q_{i+1}\}$. Let us write $R = \sum_{i=1}^{n-1} s(j_i) (e_{2j_i - 1} + e_{2j_i})^{\mathrm{T}}$ and notice that
\[R D = \sum_{i=1}^{n-1} s(j_i) (-e_{2j_i - 1} + e_{2j_i})^{\mathrm{T}}.\]
By definition of the signature $s$, one has, for $i \in \llbracket 1, n-1\rrbracket$,
\[
\begin{aligned}
-s(j_i) e_{2j_i - 1}^{\mathrm{T}} & = e_{2j_i - 1}^{\mathrm{T}} \left[(\Pi_{\mathrm{in}}^{q_{i+1}})^{\mathrm{T}}\Pi_{\mathrm{in}}^{q_{i+1}} - (\Pi_{\mathrm{in}}^{q_{i}})^{\mathrm{T}}\Pi_{\mathrm{in}}^{q_{i}}\right], \\
s(j_i) e_{2j_i}^{\mathrm{T}} & = e_{2j_i}^{\mathrm{T}} \left[(\Pi_{\mathrm{in}}^{q_{i+1}})^{\mathrm{T}}\Pi_{\mathrm{in}}^{q_{i+1}} - (\Pi_{\mathrm{in}}^{q_{i}})^{\mathrm{T}}\Pi_{\mathrm{in}}^{q_{i}}\right],
\end{aligned}
\]
and
\[
\begin{aligned}
-s(j_i) e_{2j_i - 1}^{\mathrm{T}} & = e_{2j_i - 1}^{\mathrm{T}} \left[(\Pi_{\mathrm{out}}^{q_{i}})^{\mathrm{T}}\Pi_{\mathrm{out}}^{q_{i}} - (\Pi_{\mathrm{out}}^{q_{i+1}})^{\mathrm{T}}\Pi_{\mathrm{out}}^{q_{i+1}}\right], \\
s(j_i) e_{2j_i}^{\mathrm{T}} & = e_{2j_i}^{\mathrm{T}} \left[(\Pi_{\mathrm{out}}^{q_{i}})^{\mathrm{T}}\Pi_{\mathrm{out}}^{q_{i}} - (\Pi_{\mathrm{out}}^{q_{i+1}})^{\mathrm{T}}\Pi_{\mathrm{out}}^{q_{i+1}}\right].
\end{aligned}
\]
One deduces that
\[
\begin{split}
R D & = \sum_{i=1}^{n-1} (e_{2j_i - 1} + e_{2j_i})^{\mathrm{T}} \left[(\Pi_{\mathrm{in}}^{q_{i+1}})^{\mathrm{T}}\Pi_{\mathrm{in}}^{q_{i+1}} - (\Pi_{\mathrm{in}}^{q_{i}})^{\mathrm{T}}\Pi_{\mathrm{in}}^{q_{i}}\right] \\
 & = \sum_{i=1}^{n-1} (e_{2j_i - 1} + e_{2j_i})^{\mathrm{T}} \left[(\Pi_{\mathrm{out}}^{q_{i}})^{\mathrm{T}}\Pi_{\mathrm{out}}^{q_{i}} - (\Pi_{\mathrm{out}}^{q_{i+1}})^{\mathrm{T}}\Pi_{\mathrm{out}}^{q_{i+1}}\right].
\end{split}
\]

By using the above relations, Equation \eqref{RDMeqRDProj} can be rewritten as
\begin{multline}
\label{RMeqMReduced}
\left[\lambda_q(t) \id_{n_q} - \frac{2}{n_q} \delta_q J_{n_q}\right] \Pi_{\mathrm{out}}^{q} \sum_{i=1}^{n-1} \left(\delta_{q q_{i + 1}} - \delta_{q q_{i}}\right)(e_{2j_i - 1} + e_{2j_i}) \\ = \Pi_{\mathrm{in}}^{q} \sum_{i=1}^{n-1} \left(\delta_{q q_{i+1}} - \delta_{q q_i}\right) (e_{2j_i - 1} + e_{2j_i}).
\end{multline}
Such an identity is trivially satisfied if $q \notin \{q_1, \dotsc, q_n\}$. Assume now that either $q = q_i$ for some $i \in \llbracket 2, n-1\rrbracket$ or $q = q_1 = q_n$ (and in the latter case set $i = n$ and define $j_{n+1} = j_1$). In particular, $q \in \mathcal V_{\mathrm{int}}$ and $\lambda_q(t) = \delta_q = 1$. We therefore must prove that
\begin{multline}
\label{RMeqMCaseInt}
\left[\id_{n_{q_i}} - \frac{2}{n_{q_i}} J_{n_{q_i}}\right] \Pi_{\mathrm{out}}^{q_i} (e_{2j_{i-1}-1} + e_{2j_{i-1}} - e_{2j_{i}-1} - e_{2j_{i}}) \\ = \Pi_{\mathrm{in}}^{q_i} (e_{2j_{i-1}-1} + e_{2j_{i-1}} - e_{2j_{i}-1} - e_{2j_{i}}).
\end{multline}
By definition of $\Pi_{\mathrm{in}}^{q_i}$ and $\Pi_{\mathrm{out}}^{q_i}$, one has that
\[\Pi_{\mathrm{out}}^{q_i} (e_{2j_{i-1}-1} + e_{2j_{i-1}} - e_{2j_{i}-1} - e_{2j_{i}}) = \Pi_{\mathrm{in}}^{q_i} (e_{2j_{i-1}-1} + e_{2j_{i-1}} - e_{2j_{i}-1} - e_{2j_{i}}) = w,\]
where $w \in \mathbb C^{n_{q_i}}$ has all its coordinates equal to zero, except two of them, one equal to $1$ and the other one equal to $-1$. Hence $J_{n_{q_i}} w = 0$ and \eqref{RMeqMCaseInt} holds true.

It remains to treat the case $q \in \{q_1, q_n\} \subset \mathcal V_{\mathrm u}$. In this case, $\lambda_q(t) = 1$ and $\delta_q = 0$, and we furthermore assume, with no loss of generality, that $q = q_1$. We can rewrite \eqref{RMeqMReduced} as
\[
\Pi_{\mathrm{out}}^{q_1} (e_{2j_1 - 1} + e_{2j_1}) \\ = \Pi_{\mathrm{in}}^{q_1} (e_{2j_1 - 1} + e_{2j_1}),
\]
which holds true by definition of $\Pi_{\mathrm{in}}^{q_1}$ and $\Pi_{\mathrm{out}}^{q_1}$. This concludes the proof of the proposition.
\end{proof}

The main result of the section, given next, follows immediately from Propositions \ref{PropInvariantIff} and \ref{PropRMeqR}.

\begin{proposition}
Let $(\mathcal G, L)$ be a network, $p \in [1, +\infty]$, and $\eta = (\eta_q)_{q \in \mathcal V_{\mathrm d}}$ be a measurable function with nonnegative components. Then, for every $U_0 \in \mathsf X_p^\omega$, the system $\Sigma_\omega(\mathcal G, L, \eta)$ defined in \eqref{MainSystWave} admits a unique solution $U: \mathbb R_+ \to \mathsf X_p^\omega$.
\end{proposition}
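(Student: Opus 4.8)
The plan is to transport the whole question through the d'Alembert isomorphism $T$ and reduce it to the well-posedness of the associated transport system \eqref{EquivSystTransport}, for which existence and uniqueness are already available. By definition, $U \colon \mathbb{R}_+ \to \mathsf{X}_p^\omega$ is a solution of $\Sigma_\omega(\mathcal{G}, L, \eta)$ with initial datum $U_0$ precisely when $F := T^{-1} U$ is a solution of \eqref{EquivSystTransport} with initial datum $F_0 := T^{-1} U_0$. Since by Proposition \ref{PropTOnto} the operator $T$ is a bicontinuous bijection from $\mathsf{X}_p^\omega$ onto $\mathsf{Y}_p(R)$, constructing $U$ is equivalent to constructing a solution $F$ of \eqref{EquivSystTransport} that starts in $\mathsf{Y}_p(R)$ and stays there.

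First I would observe that $F_0 = T^{-1} U_0$ belongs to $\mathsf{Y}_p(R)$, again by Proposition \ref{PropTOnto}. Viewing \eqref{EquivSystTransport} as a transport system on the $2N$ edges of lengths $L^\tau$ with transmission matrix $M$, Proposition \ref{PropTranspWellPosed} yields a unique solution $F \colon \mathbb{R}_+ \to \mathsf{X}_p^\tau$ with initial condition $F_0$. The remaining point is to show that this solution never leaves $\mathsf{Y}_p(R)$. For that I would invoke Proposition \ref{PropInvariantIff}: the subspace $\mathsf{Y}_p(R)$ is invariant under the flow of \eqref{EquivSystTransport} as soon as $R(M(t) - \id_{2N}) w(t) = 0$ for almost every $t$, and by Proposition \ref{PropRMeqR} one has $R M(t) = R$ for all $t$, so that $R(M(t) - \id_{2N}) = 0$ identically and the criterion holds trivially, for any choice of $w$. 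Hence $F(t) \in \mathsf{Y}_p(R)$ for every $t \ge 0$, and $U := T F$ is the desired solution in $\mathsf{X}_p^\omega$.

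Uniqueness follows by running the same correspondence backwards: if $U$ and $\tilde U$ are two solutions with the same initial datum, then $T^{-1} U$ and $T^{-1} \tilde U$ are two solutions of \eqref{EquivSystTransport} sharing the initial condition $F_0$, so the uniqueness part of Proposition \ref{PropTranspWellPosed} forces $T^{-1} U = T^{-1} \tilde U$, whence $U = \tilde U$ by injectivity of $T$. Since all the substantive analytic work has been carried out in the cited propositions, I do not expect a genuine obstacle here; the only point that must be checked carefully is the reduction itself, namely that invariance of $\mathsf{Y}_p(R)$ — equivalently, the algebraic identity $R M(t) = R$ of Proposition \ref{PropRMeqR} — is exactly what keeps the transported transport solution inside the range of $T$, so that $U = TF$ is well defined as an $\mathsf{X}_p^\omega$-valued map for all $t \ge 0$.
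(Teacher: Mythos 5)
Your proposal is correct and follows essentially the same route as the paper: reduce to the transport system \eqref{EquivSystTransport} via the d'Alembert isomorphism $T$ of Proposition \ref{PropTOnto}, get existence and uniqueness from Proposition \ref{PropTranspWellPosed}, and obtain invariance of $\mathsf Y_p(R)$ by combining the criterion of Proposition \ref{PropInvariantIff} with the identity $RM(t)=R$ of Proposition \ref{PropRMeqR}, which makes $R(M(t)-\id_{2N})w(t)$ vanish for any $w$. This is precisely the paper's argument, which it states as an immediate consequence of Propositions \ref{PropInvariantIff} and \ref{PropRMeqR}.
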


\subsection{Stability of solutions}
\label{SecStabWave}

We next provide an appropriate definition of exponential type for \eqref{MainSystWave}.

\begin{definition}
Let $(\mathcal G, L)$ be a network, $p \in [1, +\infty]$, and $\mathcal D$ be a subset of the space of measurable functions $\eta = (\eta_q)_{q \in \mathcal V_{\mathrm d}}$ with nonnegative components. Denote by $\Sigma_\omega(\mathcal G, L, \mathcal D)$ the family of systems $\Sigma_\omega(\mathcal G, L, \eta)$ for $\eta \in \mathcal D$. We say that $\Sigma_\omega(\mathcal G, L, \mathcal D)$ is of \emph{exponential type} $\gamma$ in $\mathsf X_p^\omega$ if, for every $\varepsilon > 0$, there exists $K > 0$ such that, for every $\eta \in \mathcal D$ and $u_0 \in \mathsf X_p^\omega$, the corresponding solution $u$ of $\Sigma_\omega(\mathcal G, L, \eta)$ satisfies, for every $t \geq 0$,
\[
\norm{u(t)}_{p} \leq K e^{(\gamma + \varepsilon) t} \norm{u_0}_{p}.
\]
We say that $\Sigma_\omega(\mathcal G, L, \mathcal D)$ is \emph{exponentially stable} in $\mathsf X_p^\omega$ if it is of negative exponential type.
\end{definition}

Given $\mathcal D$ as in the above definition, we define
\[\mathcal M = \{M: \mathbb R \to \mathcal M_{2N}(\mathbb R) \;|\: M \text{ is given by \eqref{DefM} for some } \eta \in \mathcal D\}.\]
Thanks to the continuity of $T$ and $T^{-1}$ established in Proposition \ref{PropTOnto}, we remark that $\Sigma_\omega(\mathcal G, L, \mathcal D)$ is of exponential type $\gamma$ in $\mathsf X_p^\omega$ if and only if $\Sigma_\tau(L, \mathcal M)$ is of exponential type $\gamma$ in $\mathsf Y_p(R)$. As a consequence of Corollary \ref{CoroSilkTransport}, we have the following result in the case of arbitrarily switching dampings $\eta_q$, $q \in \mathcal V_{\mathrm d}$.

\begin{corollary}
\label{CoroWaveEquivLambdaL}
Let $(\mathcal G, \Lambda)$ be a network, $d = \#\mathcal V_{\mathrm{d}}$, $\mathfrak D$ a subset of $(\mathbb R_+)^d$, and $\mathcal D = L^\infty(\mathbb R, \mathfrak D)$. The following statements are equivalent.
\begin{enumerate}[label={\bf \roman*.}, ref={(\roman*)}]
\item $\Sigma_\omega(\mathcal G, \Lambda, \mathcal D)$ is exponentially stable in $\mathsf X_p^\omega$ for some $p \in [1, +\infty]$.
\item $\Sigma_\omega(\mathcal G, L, \mathcal D)$ is exponentially stable in $\mathsf X_p^\omega$ for every $L \in V_+(\Lambda)$ and $p \in [1, +\infty]$.
\end{enumerate}
\end{corollary}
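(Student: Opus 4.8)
The plan is to transfer the whole question, through the d'Alembert isomorphism $T$ of Proposition~\ref{PropTOnto}, to the transport system associated with the wave equation and then to read off the conclusion from Corollary~\ref{CoroSilkTransport}. The analytic content is already contained in that corollary and in the equivalence ``$\Sigma_\omega(\mathcal G, L, \mathcal D)$ of exponential type $\gamma$ in $\mathsf X_p^\omega$ $\iff$ $\Sigma_\tau$ of exponential type $\gamma$ in $\mathsf Y_p(R)$'' recorded just before the statement; what remains is to set up the doubling of the edge set correctly and to check one inclusion of delay sets. I expect this inclusion to be the only genuinely new ingredient, the rest being bookkeeping.

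First I would fix notation for the doubling. Writing $\Lambda^\tau = (\Lambda_1, \Lambda_1, \dotsc, \Lambda_N, \Lambda_N) \in (\mathbb R_+^\ast)^{2N}$ and, for each $L = (L_1, \dotsc, L_N)$, $L^\tau = (L_1, L_1, \dotsc, L_N, L_N)$, this defines a linear injection $\iota\colon \mathbb R^N \to \mathbb R^{2N}$ with $\iota(\Lambda) = \Lambda^\tau$, and the transport system~\eqref{EquivSystTransport} attached to the wave equation on edge-lengths $L$ is exactly $\Sigma_\tau(\iota(L), \mathcal M)$. Next I would verify that $\mathcal M$ is of arbitrary-switching type: since the matrix $M(t)$ of~\eqref{DefM} is a fixed pointwise function $\Phi$ of $\eta(t)$, one has $\mathcal M = L^\infty(\mathbb R, \mathfrak B)$ with $\mathfrak B = \Phi(\mathfrak D) \subset \mathcal M_{2N}(\mathbb R)$; crucially, even if $\mathfrak D$ is unbounded, $\mathfrak B$ is bounded because each damping enters only through the M\"obius factor $(1 - \eta_q)/(1 + \eta_q) \in (-1, 1]$, so all entries of $M$ lie in $[-1, 1]$. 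The reverse inclusion $L^\infty(\mathbb R, \mathfrak B) \subset \mathcal M$ follows by measurable selection, inverting that bijection componentwise. Finally I would note that the d'Alembert matrix $R$ belongs to $\mathrm{Inv}(\mathcal M)$: this is precisely what Propositions~\ref{PropRMeqR} and~\ref{PropInvariantIff} give, since $R M(t) = R$ for every $M \in \mathcal M$ forces $R(M(t) - \id_{2N})w(t) = 0$.

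The crux is the claim $\iota(V_+(\Lambda)) \subset V_+(\Lambda^\tau)$, which is what lets me feed network-length variations into the transport corollary. To prove it, take $L \in V_+(\Lambda)$ and $\mathbf m \in Z(\Lambda^\tau)$, and set $n_j = m_{2j-1} + m_{2j}$; then $\Lambda \cdot \mathbf n = \Lambda^\tau \cdot \mathbf m = 0$, so $\mathbf n \in Z(\Lambda) \subset Z(L)$, whence $\iota(L) \cdot \mathbf m = \sum_{j} L_j(m_{2j-1} + m_{2j}) = L \cdot \mathbf n = 0$. Thus $Z(\Lambda^\tau) \subset Z(\iota(L))$, and since positivity of the components is preserved, $\iota(L) \in V_+(\Lambda^\tau)$. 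This verification is short; I anticipate no obstruction beyond keeping the index pairing $(2j-1, 2j) \leftrightarrow j$ straight.

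With these pieces I would close by a chain of equivalences. By the transfer through $T$, statement \textbf{i} is equivalent to exponential stability of $\Sigma_\tau(\Lambda^\tau, \mathcal M)$ in $\mathsf Y_p(R)$ for some $p$ and the d'Alembert $R \in \mathrm{Inv}(\mathcal M)$, which is exactly the first assertion of Corollary~\ref{CoroSilkTransport} applied with $N$ replaced by $2N$, delays $\Lambda^\tau$, and the bounded set $\mathfrak B$. That corollary then delivers exponential stability of $\Sigma_\tau(L^\tau, \mathcal M)$ in $\mathsf Y_p(R)$ for \emph{every} $L^\tau \in V_+(\Lambda^\tau)$, every $p \in [1, +\infty]$, and every admissible $R$. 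Specializing to $L^\tau = \iota(L)$ with $L \in V_+(\Lambda)$ (legitimate by the inclusion just proved) and to the d'Alembert $R$, and transferring back through $T$, yields statement \textbf{ii}. The implication \textbf{ii} $\Rightarrow$ \textbf{i} is immediate on taking $L = \Lambda \in V_+(\Lambda)$.
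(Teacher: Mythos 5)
Your proposal is correct and follows essentially the same route as the paper: transfer the wave system to the transport system \eqref{EquivSystTransport} via the d'Alembert isomorphism $T$ (using that exponential type is preserved by the continuity of $T$ and $T^{-1}$, and that the d'Alembert matrix $R$ lies in $\mathrm{Inv}(\mathcal M)$ by Propositions \ref{PropRMeqR} and \ref{PropInvariantIff}), then apply Corollary \ref{CoroSilkTransport}. The paper states this deduction in one line, whereas you make explicit the two bookkeeping facts it leaves implicit --- that $\mathcal M = L^\infty(\mathbb R, \mathfrak B)$ with $\mathfrak B$ bounded, and the inclusion $\iota(V_+(\Lambda)) \subset V_+(\Lambda^\tau)$ for the doubled delays --- both of which you verify correctly.
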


We can now provide a necessary and sufficient condition on $\mathcal G$ and $\mathfrak D$ for the exponential stability of $\Sigma_\omega(\mathcal G, \Lambda, L^\infty(\mathbb R, \mathfrak D))$.

\begin{theorem}
\label{TheoWaveStabIffTopo}
Let $(\mathcal G, \Lambda)$ be a network, $d = \#\mathcal V_{\mathrm{d}}$, $\mathfrak D$ a bounded subset of $(\mathbb R_+)^d$, and $\mathcal D = L^\infty(\mathbb R, \mathfrak D)$. Then $\Sigma_\omega(\mathcal G, \Lambda, \mathcal D)$ is exponentially stable in $\mathsf X_p^\omega$ for some $p \in [1, +\infty]$ if and only if $\mathcal G$ is a tree, $\mathcal V_{\mathrm u}$ contains only one vertex, and $\overline{\mathfrak D} \subset (\mathbb R_+^\ast)^d$.
\end{theorem}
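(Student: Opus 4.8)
The plan is to transport the problem, via the d'Alembert operator $T$ of Proposition \ref{PropTOnto}, to the switched transport system $\Sigma_\tau(\Lambda,\mathcal M)$ on $2N$ channels, and then to exploit the contraction structure of the transmission matrix $M(t)$ recorded in Lemma \ref{LemmMAlmostOrth}. Since $\tfrac{1}{\sqrt2}T$ is unitary for $p=2$ (Remark \ref{RemkTUnit}), exponential stability of $\Sigma_\omega(\mathcal G,\Lambda,\mathcal D)$ in $\mathsf X_2^\omega$ is equivalent to uniform-in-$\eta$ exponential decay of the transport energy $E(t)=\norm{F(t)}_2^2$; and by the $p$-independence of the exponential type furnished by Theorem \ref{TheoTranspTheta}, it suffices to settle the case $p=2$ to obtain the statement for some (equivalently, every) $p$. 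By Remark \ref{RemkBClosed} we may replace $\mathfrak D$ by its compact closure $\overline{\mathfrak D}$. Because each of the three conditions is independent of the lengths, both implications will be proved for arbitrary $\Lambda$; Corollary \ref{CoroWaveEquivLambdaL} guarantees consistency (stability is insensitive to lengths within $V_+(\Lambda)$) but will not be needed in an essential way.

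For necessity I argue the contrapositive by exhibiting, whenever one of the three conditions fails, a switching law (in fact a constant one) admitting a non-decaying solution. All three constructions produce a time-periodic solution supported away from the time-varying damping, so that the switching is irrelevant. First, if $\mathcal G$ is not a tree it contains a cycle $C$, and I build a solution that vanishes identically on every edge not in $C$ and at every vertex where $C$ meets the rest of the graph; this is compatible with the continuity and Kirchhoff conditions and reduces to a lossless one-dimensional wave running along $C$. Second, once $\mathcal G$ is known to be a tree, if $\#\mathcal V_{\mathrm u}\ge 2$ I take the unique path joining two undamped vertices and build a solution supported on it, vanishing on the branches and at the branch vertices; the two lossless Dirichlet ends make the transmitted amplitude return with modulus-one monodromy. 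Third, if $\overline{\mathfrak D}\not\subset(\mathbb R_+^\ast)^d$, some $\eta^0\in\overline{\mathfrak D}$ has a vanishing component at a damped vertex $q_0$; the constant law $\eta\equiv\eta^0$ renders the reflection at $q_0$ lossless, so the path from $q_0$ to the undamped vertex $q_\ast$ again supports a non-decaying wave. In each case the constructed datum lies in $\mathsf X_p^\omega$, contradicting exponential stability for the given $p$.

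For sufficiency I assume $\mathcal G$ is a tree, $\mathcal V_{\mathrm u}=\{q_\ast\}$, and $\eta_{\min}\le\eta_q\le\eta_{\max}$ with $\eta_{\min}>0$, so that at every damped vertex the reflection coefficient satisfies $\abs{(1-\eta_q)/(1+\eta_q)}\le\bar r<1$ and, by Lemma \ref{LemmMAlmostOrth}, each passage of a wave through a damped vertex removes at least a fixed fraction $\kappa=1-\bar r^2>0$ of its energy. The decisive geometric fact is that, since $\mathcal G$ is a tree with $q_\ast$ its only undamped external vertex, every other leaf is damped and there are no closed characteristics; hence every d'Alembert characteristic meets a damped vertex within a time bounded by twice the tree's diameter $D$, the only lossless reflection being at $q_\ast$. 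I encode this as a finite-time observability inequality: there exist $T\asymp D$ and $c>0$, independent of the switching law, such that the energy arriving at damped vertices on $[0,T]$ is at least $c\,E(0)$; combined with the energy balance $E(0)-E(T)\ge\kappa\!\int_0^T\!\sum_{q\in\mathcal V_{\mathrm d}}\abs{w_q(s)}^2\,ds$ coming from Lemma \ref{LemmMAlmostOrth}, this yields $E(T)\le(1-\kappa c)E(0)$ and thus exponential decay; stability for all $p$ then follows as above, and Corollary \ref{CoroSilkTransport} records the equivalence with $\mathsf Y_p(R)$-stability.

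The main obstacle is precisely the observability step and, above all, its uniformity in the switching signal. Qualitatively the topological hypotheses are exactly what rules out undamped modes (no cycle, a single lossless boundary vertex), but turning the statement ``no nonzero solution avoids the damping'' into a quantitative contraction requires controlling the fraction of energy that reaches the damped leaves as it is repeatedly scattered at the interior and undamped vertices. Since those scatterings are norm-preserving and length-determined---hence independent of $\eta$---while the only $\eta$-dependence sits in the (always contracting) damped reflections, I expect the geometric constant $c$ to be obtainable uniformly, either by a direct ray-tracing and energy-flux bookkeeping on the tree or by a compactness argument over the compact set $\overline{\mathfrak D}$ of admissible damping values; making this bookkeeping rigorous, including the case of interior vertices of degree $\ge 3$ where a wave is only partially transmitted toward the leaves, is the technical heart of the proof.
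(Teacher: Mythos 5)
Your overall architecture (d'Alembert reduction, energy identity from Lemma \ref{LemmMAlmostOrth}, observability for sufficiency, explicit non-decaying solutions for necessity) is the same as the paper's, but your necessity argument contains a genuine gap, and it sits exactly where you dismiss Corollary \ref{CoroWaveEquivLambdaL} as ``not needed in an essential way.'' Your three counterexample constructions require, for the \emph{actual} lengths $\Lambda$, a time-periodic solution supported on a cycle or on a path between two lossless exterior vertices, vanishing at every vertex where that cycle/path meets the rest of the graph. Such a solution generally does not exist when the lengths are incommensurable. Concrete example: a star with three edges of lengths $L_1 = 1$, $L_2 = \sqrt 2$, $L_3 = 1$, with the two undamped vertices at the ends of edges $1$ and $2$. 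A solution supported on edges $1$ and $2$ must vanish at the center (continuity with the zero branch), and the Kirchhoff condition there forces the normal derivatives of edges $1$ and $2$ to cancel; gluing the two edges into the interval $[0, L_1+L_2]$, the solution extends to a Dirichlet solution $u(t,x) = F(x-t) - F(-x-t)$, $F$ being $2(L_1+L_2)$-periodic, subject to $u(t, L_1) = 0$ for all $t$. The latter forces $F$ to be $2L_1$-periodic as well, and since $L_1/(L_1+L_2) \notin \mathbb Q$ this makes $F$ constant, i.e., $u \equiv 0$. The same obstruction kills your cycle construction as soon as the cycle has two or more attachment vertices at incommensurable arc distances, and your third construction likewise. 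This is precisely why the paper does \emph{not} build the solution for $\Lambda$: it chooses integer lengths $L \in V_+(\Lambda) \cap \mathbb N^N$ (possible by Proposition \ref{PropRatDep}), writes the explicit periodic solution $u_{j_i}(t,x) = s(j_i)\sin(2\pi t)\sin(2\pi x)$ for those lengths, and then transfers the failure of exponential stability from $L$ back to $\Lambda$ via Corollary \ref{CoroWaveEquivLambdaL}. For irrational length ratios, lack of exponential stability is in general \emph{not} witnessed by any single non-decaying solution (all solutions may decay, just not at a uniform exponential rate), so the rational-dependence robustness result is the indispensable bridge; without it your argument does not close.

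On sufficiency, your scheme coincides with the paper's: the dissipation identity obtained from Lemma \ref{LemmMAlmostOrth} (the paper's \eqref{DerivEnergy}) plus a uniform-in-$\eta$ observability inequality, followed by the passage from $p=2$ to all $p$ (which, note, again uses Corollary \ref{CoroWaveEquivLambdaL} in the paper). However, you leave the observability inequality as an expectation (``I expect the geometric constant $c$ to be obtainable uniformly''), whereas the paper closes this step by invoking the known observability results for trees of strings (D\'ager--Zuazua, and Schmidt). So this half of your proof is incomplete rather than wrong: the missing statement is true and available in the literature, while the necessity constructions, as you state them, are false for generic lengths and must be replaced by the integer-length construction plus the transfer argument.
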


\begin{proof}
Similarly to Remark \ref{RemkBClosed}, the exponential stability of $\Sigma_\omega(\mathcal G, \Lambda, \mathcal D)$ is equivalent to that of $\Sigma_\omega(\mathcal G, \Lambda, L^\infty(\mathbb R, \overline{\mathfrak D}))$. We therefore assume with no loss of generality that $\mathfrak D$ is compact.

Suppose that either $\mathcal G$ is not a tree, $\mathcal V_{\mathrm u}$ contains more than one vertex, or $\mathfrak D$ contains a point $\overline\eta$ with $\overline\eta_{\overline q} = 0$ for some $\overline q \in \mathcal V_{\mathrm d}$. Let $(q_1, \dotsc, q_n)$ be an elementary path in $\mathcal G$ with $q_1 = q_n$, $q_1, q_n \in \mathcal V_{\mathrm u}$, or $q_1 \in \mathcal V_{\mathrm u}$ and $q_n = \overline q$. Let $s$ be its signature and, for $i \in \llbracket 1, n-1\rrbracket$, let $j_i$ be the index corresponding to the edge $\{q_i, q_{i+1}\}$. Take $L \in V_+(\Lambda) \cap \mathbb N^N$, which is possible thanks to Proposition \ref{PropRatDep}. For $j \in \llbracket 1, N\rrbracket$, we define
\[
u_j(t, x) = 
\left\{
\begin{aligned}
& s(j_i) \sin(2 \pi t) \sin(2 \pi x), & & \text{ if } j = j_i \text{ for a certain } i \in \llbracket 1, n-1\rrbracket, \\
& 0, & & \text{ otherwise}.
\end{aligned}
\right.
\]
One easily checks that $(u_j)_{j \in \llbracket 1, N\rrbracket}$ is a solution of $\Sigma_\omega(\mathcal G, L, \eta)$ for every $\eta \in \mathcal D$. Since it is periodic and nonzero, $\Sigma_\omega(\mathcal G, L, \mathcal D)$ is not exponentially stable in $\mathsf X_p^\omega$ for any $p \in [1, +\infty]$, and so, by Corollary \ref{CoroWaveEquivLambdaL}, $\Sigma_\omega(\mathcal G, \Lambda, \mathcal D)$ is not exponentially stable in $\mathsf X_p^\omega$ for any $p \in [1, +\infty]$.

Suppose now that $\mathcal G$ is a tree, $\mathcal V_{\mathrm u}$ contains only one vertex, and $\mathfrak D = \overline{\mathfrak D} \subset (\mathbb R_+^\ast)^d$. Up to changing the orientation of $\mathcal G$, we assume that $\alpha(j) = q$ for every $q \in \mathcal V_{\mathrm u}$ and $j \in \mathcal E_q$. Let $\eta_{\min} = \min_{\eta \in \mathfrak D} \min_{q \in \mathcal V_{\mathrm d}} \eta_q \allowbreak> 0$. Let $U = (u, v)$ be a solution of $\Sigma_\omega(\mathcal G, \Lambda, \mathcal D)$ in $\mathsf X_2^\omega$ and $f = \frac{1}{\sqrt{2}} T U$. Notice that $\norm{f(t)}_{\mathsf Y_2(R)} = \norm{U(t)}_{2}$ thanks to Remark \ref{RemkTUnit}. For $t \geq 0$, denote $F_0(t) = (f_i(t, 0))_{i \in \llbracket 1, 2N\rrbracket}$ and $F_\Lambda(0) = (f_i(t, \allowbreak\Lambda_i^\tau))_{i \in \llbracket 1, 2N\rrbracket}$, so that $F_0(t) = M(t) F_\Lambda(t)$. For $t \geq 0$ and $s \in [0, \Lambda_{\min}]$, we have, by Lemma \ref{LemmMAlmostOrth},
\begin{align*}
\norm{U(t + s)}_{2}^2 & = \sum_{i=1}^{2N} \int_0^{\Lambda_i^\tau} \abs{f_i(t + s, x)}^2 dx \displaybreak[0] \\
& = \sum_{i=1}^{2N} \int_s^{\Lambda_i^\tau} \abs{f_i(t, x - s)}^2 dx + \int_0^s \abs{F_0(t + s - x)}_2^2 dx \displaybreak[0] \\
& = \sum_{i=1}^{2N} \int_s^{\Lambda_i^\tau} \abs{f_i(t, x - s)}^2 dx + \int_0^s \abs{F_\Lambda(t + s - x)}_2^2 dx \\
& \hphantom{=} {} - \int_0^{s} \sum_{q \in \mathcal V_{\mathrm d}} \sum_{i \in \mathcal E_q} \frac{4 \eta_q(t + s - x)}{(1 + \eta_q(t + s - x))^2} \abs{f_{2i-1}(t + s - x, \Lambda_i)}^2 dx \displaybreak[0] \\
& = \norm{U(t)}_{2}^2 - \sum_{q \in \mathcal V_{\mathrm d}} \sum_{i \in \mathcal E_q} \int_t^{t + s} \frac{4 \eta_q(\tau)}{(1 + \eta_q(\tau))^2} \abs{f_{2i-1}(\tau, \Lambda_i)}^2 d\tau,
\end{align*}
and, since
\[f_{2i-1}(\tau, \Lambda_i) = \frac{\pad{u_i}{x}(\tau, 0) + v_i(\tau, 0)}{\sqrt{2}} = \frac{1 + \eta_q(\tau)}{\sqrt{2}} \pad{u_i}{x}(\tau, 0), \qquad \forall q \in \mathcal V_{\mathrm d},\; \forall i \in \mathcal E_q,\]
we conclude that
\begin{equation}
\label{DerivEnergy}
\norm{U(t + s)}_{2}^2 = \norm{U(t)}_{2}^2 - \sum_{q \in \mathcal V_{\mathrm d}} \sum_{i \in \mathcal E_q} \int_t^{t + s} 2 \eta_q(\tau) \abs{\pad{u_i}{x}(\tau, 0)}^2 d\tau.
\end{equation}
Since \eqref{DerivEnergy} holds for every $t \geq 0$ and every $s \in [0, \Lambda_{\min}]$, one can easily obtain by an inductive argument that it holds for every $t \geq 0$ and $s \geq 0$. Hence, for every $t \geq 0$ and $s \geq 0$,
\[
\norm{U(t + s)}_{2}^2 - \norm{U(t)}_{2}^2 \leq - 2 \eta_{\min} \sum_{q \in \mathcal V_{\mathrm d}} \sum_{i \in \mathcal E_q} \int_t^{t + s} \abs{\pad{u_i}{x}(\tau, 0)}^2 d\tau.
\]
We can thus proceed as in \cite[Chapter 4, Section 4.1]{Dager2006Wave} (see also \cite{Schmidt1992Modelling}) to obtain the following observability inequality: there exist $c > 0$ and $\ell > 0$ such that, for every $t \geq 0$,
\[
\sum_{q \in \mathcal V_{\mathrm d}} \sum_{i \in \mathcal E_q} \int_t^{t + \ell} \abs{\pad{u_i}{x}(\tau, 0)}^2 d\tau \geq c \norm{U(t + \ell)}_{2}^2.
\]
This yields the desired exponential convergence in $\mathsf X_2^\omega$, and hence in $\mathsf X_p^\omega$ for every $p \in [1, +\infty]$ thanks to Corollary \ref{CoroWaveEquivLambdaL}.
\end{proof}



\makeatletter
\renewcommand{\@tocwrite}[2]{}
\makeatother

\end{document}